
\documentclass{birkjour}

\usepackage{amsmath, amssymb}
\usepackage{amsfonts}
\usepackage{graphicx}
\usepackage{mathrsfs}
\usepackage[arrow,matrix,curve,cmtip,ps]{xy}
\usepackage{amsthm}
\usepackage{enumerate}
\usepackage{color}
\usepackage[colorlinks]{hyperref}
\usepackage{tikz}

%
%
 \newtheorem{thm}{Theorem}[section]
 \newtheorem{cor}[thm]{Corollary}
 \newtheorem{lem}[thm]{Lemma}
 \newtheorem{prop}[thm]{Proposition}
 \theoremstyle{definition}
 \newtheorem{defn}[thm]{Definition}
 \theoremstyle{remark}
 \newtheorem{rem}[thm]{Remark}
 \newtheorem*{ex}{Example}
 \numberwithin{equation}{section}

\newcommand{\Z}{\mathbb{Z}}
\newcommand{\N}{\mathbb{N}}

\newcommand{\C}{\mathbb{C}}
\newcommand{\g}{\mathcal{G}_{G,\Lambda}}
\newcommand{\La}{\Lambda}
\newcommand{\Lam}{\Lambda^\mathrm{min}}
\newcommand{\la}{\lambda}
\newcommand{\og}{\mathcal{O}_{G,\Lambda}}

\newcommand{\es}{\mathcal{E}(\mathcal{S}_{G,\Lambda})}
\newcommand{\sg}{\mathcal{S}_{G,\Lambda}}

\newcommand{\gt}{\mathcal{G}_{\mathrm{tight}}(\mathcal{S}_{G,\Lambda})}

\newcommand{\ca}{\mathfrak{C}}

\begin{document}

%
%
%
%
%
%
%
%
%

\title[self-similar $k$-graph $C^*$-algebras]{An inverse semigroup approach to self-similar k-graph $C^*$-algebras and simplicity}

\author[Hossein Larki]{Hossein Larki}

\address{Department of Mathematics\\
Faculty of Mathematical Sciences and Computer\\
Shahid Chamran University of Ahvaz\\
P.O. Box: 83151-61357\\
Ahvaz\\
 Iran}

\email{h.larki@scu.ac.ir}

\subjclass{46L05}

\keywords{Finitely aligned $k$-graph, self-similar $k$-graph, inverse semigroup, tight groupoid of germs, $C^*$-algebra}

\date{\today}


\begin{abstract}
We generalize the Li-Yang notion of self-similar $k$-graph $(G,\La)$ and its $C^*$-algebra $\mathcal{O}_{G,\La}$ to any finitely aligned  $k$-graph $\La$. We then introduce an inverse semigroup model for $\mathcal{O}_{G,\La}$ and analyze its tight groupoid and $C^*$-algebra via inverse semigroup methods.
\end{abstract}

\maketitle


\section{Introduction}

The class of self-similar graph $C^*$-algebras is introduced by R. Exel and E. Pardo \cite{exe17} as a unified framework for the Katsura's \cite{kat08} and Nakrashevyche's algebras \cite{nek04,nek05} besides the graph $C^*$-algebras \cite{rae05}, which is interested in recent years \cite{exe17,lar21, cla19, bed17,li21-ideal,haz19,lar23, lac18}. In particular,  a specific inverse semigroup and tight groupoid model is introduced to this class of $C^*$-algebras, and some $C^*$-algebraic features are characterized via the results of \cite{exe16}.

Let $G$ be a (discrete and countable) group and $\La=(\La^0,\La,r,s)$ be a row-finite source-free $k$-graph. Inspired from \cite{exe17}, Li and Yang defined a notion of self-similar action $G \curvearrowright \La$ and associated universal $C^*$-algebra $\og$ \cite{li21,li21-ideal}, which are naturally the higher-rank analogues of the Exel-Pardo ones and contain moreover the boundary quotient $C^*$-algebras $\mathcal{Q}(\La\bowtie G)$ due to \cite[Theorem 3.3]{li19-boundary} (cf. \cite[Example 3.10(4)]{li21}).  To study the $C^*$-algebraic properties, they also introduce a path-like groupoid $\mathcal{G}_{G,\La}$ such that $\og\cong C^*(\mathcal{G}_{G,\La})$ provided $G$ is an amenable group and $(G,\La)$ is pseudo free \cite[Theorem 5.10]{li21}. Recently the algebraic analogues of Exel-Pardo $C^*$-algebras are considered in \cite{haz19,lar23}.

In this paper, we consider self-similar $k$-graphs $(G,\La)$ over a finitely aligned $k$-graph $\La$ (possibly containing sources), and define a universal $C^*$-algebra $\og$ for any such $(G,\La)$. Our main aim here is to introduce an inverse semigroup $\sg$ and its tight groupoid of germs $\gt$ such that
$$\og \cong C^*_{\mathrm{tight}}(\sg) \cong C^*(\gt)$$
(Theorem \ref{thm4.7} below). Thus, by the inverse semigroup approach of \cite{exe16}, we can explicitly describe some properties of the groupoid $\gt$ such as Hausdorffness, minimality and effectiveness by the underlying graphical features. Our motivation is threefold:
\begin{enumerate}[(1)]
  \item First, in \cite{li21,li21-ideal} it was just considered row-finite $k$-graphs without sources, while we can generalize the notion to a significant larger class containing self-similar $k$-graphs over finitely aligned $\La$ (with possible sources).
  \item Next, all results of \cite{li21,li21-ideal} are for \emph{pseudo free} self-similar $k$-graphs $(G,\La)$. We will modify them for not necessarily pseudo free ones. For example, the groupoid $\mathcal{G}_{G,\La}$ of \cite[Section 5.1]{li21} is not a suitable groupoid for general row-finite $(G,\La)$ and the isomorphism $\og\cong C^*(\mathcal{G}_{G,\La})$ in \cite[Theorem 5.10]{li21} does not probably hold anymore for non-pseudo free $(G,\La)$. Using the tight groupoid of germs $\gt$ constructed from $\sg$, Theorem \ref{thm4.7} may be considered as an improvement of this result for the non-pseudo free cases. We obtain in particular Corollaries \ref{cor10.2} and \ref{cor11.4} for the simplicity of $\og$, which extend \cite[Theorem 6.6(ii)]{li21}.
  \item Finally, the inverse semigroup approach provides very useful and strong methods in its own right for the study of $C^*$-algebras (see \cite{exe08,mil14,lal16,lal19,mes16} for example), so it does as well for the self-similar $k$-graph $C^*$-algebras $\og$ even in the row-finite and pseudo free case \cite{li21,li21-ideal,li19-kms}.
\end{enumerate}

After submitting the paper to journal, we were informed by the anonymous referee that there is a more general class of $C^*$-algebras associated to \emph{left cancelative small categories} (LCSCs) introduced by J. Spielberg \cite{spi14,spi20}, and some of our results in this paper may be the description of those obtained in the LCSC setting. Indeed, for any self-similar $k$-graph $(G, \La, \varphi)$ one may construct a so-called \emph{Zappa-Sz$\acute{e}$p product} $\La \rtimes^\varphi G$ as an LCSC such that their associated $C^*$-algebras coincide. There are several inverse semigroups and groupoids having been proposed in the literature for LCSC $C^*$-algebras, and our approach here is more closer to the inverse semigroup model of \cite{ort20}. In particular, we prove in Proposition \ref{prop6.5} that the inverse semigroup $\sg$ of Section \ref{sec3} and $\mathcal{S}(\La \rtimes ^{\varphi} G)$ of \cite{ort20} produce isomorphic tight groupoids, and hence by combining it with \cite[Proposition 5.2]{ort20}, $\gt$ is isomorphic to the Spielberg's groupoid $G|_{\partial(\La \rtimes ^{\varphi} G)}$ \cite{spi20}. However, we should note that the arguments and proofs in the present paper are completely based on $k$-graphical theory, so they are independent of known results in the LSCS setting and much more understandable.

This paper is organized as follows. We begin in Section \ref{sec2} by recalling some basic definitions and results in the literature which are needed throughout the paper. Also, self-similar $k$-graphs $(G,\La)$ over a finitely aligned $k$-graph $\La$ and associated $C^*$-algebras $\og$ are defined. In Section \ref{sec3}, for a given finitely aligned $(G,\La)$ we define a specific inverse semigroup $\sg$ and then, in Section \ref{sec4}, show that its tight $C^*$-algebra is isomorphic to $\og$. Note that the construction of $\sg$ is more complicated than that of the inverse semigroup $\mathcal{S}_{G,E}$ of \cite{exe17} even in the 1-graph setting.

Sections \ref{sec5}, \ref{sec6}, and \ref{sec7} will be devoted to study the groupoid $\gt$. In Section \ref{sec5}, Hausdorff property of $\gt$ is characterized by giving a necessary and sufficient condition for the underlying $(G,\La)$. In particular, if $(G,\La)$ is pseudo free then $\gt$ is Hausdorff. In Section \ref{sec6}, we describe the tight spectrum of $\sg$ via the boundary path space $\partial \La$. Indeed, using a one-to-one correspondence between boundary paths of $\La$ and ultrafilters in $\es$ we prove that the ultrafilter space $\widehat{\mathcal{E}}_\infty(\sg)$ is homeomorphic to $\partial \La$. Note that, according to Lemma \ref{lem3.4}, the idempotent set $\es$ of $\sg$ coincides with that of $\mathcal{S}_\La$ in \cite{far05}. In light of this fact and that $\partial \La$ is a closed subset of the path space of $\La$ (see \cite[Section 5]{far05} for details), we conclude that the tight spectrum $\widehat{\mathcal{E}}_{\mathrm{tight}}(\sg)$ is homeomorphic to $\partial \La$. We also prove that $\gt$ is isomorphic to $\mathcal{G}_{\mathrm{tight}}(\mathcal{S}(\La \rtimes^{\varphi}G))$ of \cite{ort20} and the Spielberg's groupoid $G|_{\partial (\La \rtimes^{\varphi} G)}$ of \cite{spi14,spi20}. Next, in Section \ref{sec7}, we investigate the relation between the ``path-like" groupoid $\mathcal{G}_{G,\La}$ of \cite{li21} and the tight groupoid $\gt$ in the case that $(G,\La)$ is a pseudo free self-similar $k$-graph over a row-finite source-free $k$-graph $\La$, and prove in Proposition \ref{prop7.2} that they are isomorphic as topological groupoids.

In Sections \ref{sec8} and \ref{sec9}, we give necessary and sufficient graphical conditions for a self-similar $k$-graph $(G,\La)$, under which the groupoid $\gt$ is minimal and effective (or equivalently, the natural action $\sg \curvearrowright \widehat{\mathcal{E}}_{\mathrm{tight}}(\sg)$ is reducible and topologically free, respectively). These results will be applied, in particular, in next sections to study the simplicity of $\og$.

Finally, the last two sections will be devoted to the simplicity of $\og$. In order to apply \cite[Proposition 3.10]{li21-ideal} for the nuclearity of $\og$, we restrict our self-similar $k$-graphs $(G,\La)$ over row-finite source-free $k$-graphs $\La$. (It is an open problem in the non-row-finite case that under \emph{what conditions} the $C^*$-algebra $\og$ would be nuclear, even for a self-similar 1-graph $\La$.) In Section \ref{sec10}, we assume the groupoids $\gt$ is Hausdorff (see Theorem \ref{thm5.2} below) and generalize the simplicity result \cite[Theorem 6.6(ii)]{li21} to this case. We should also note that the arguments of \cite{li21} and \cite{li21-ideal} are based on the groupoid methods, while ours in this paper are on the inverse semigroup results of \cite{exe16}. Furthermore, we know that the simplicity of the reduced $C^*$-algebra $C^*_{\mathrm{red}}(\mathcal{G})$ for a non-Hausdorff groupoid $\mathcal{G}$ is more complicated; nevertheless, we may consider the result of \cite[Section 5]{cla19} for the tight groupoid $\gt$. Given an inverse semigroup $S$, the authors of \cite{cla19} introduced Condition (S) (which is stronger than the topologically free propoerty) for the action $S \curvearrowright \widehat{\mathcal{E}}_{\mathrm{tight}}(S)$ such that it together with the minimality of $\mathcal{G}_{\mathrm{tight}}(S)$ imply that $C^*_{\mathrm{red}}(\mathcal{G}_{\mathrm{tight}}(S))$ is simple (combine \cite[Lemma 5.6 and Corollary 4.12(2)]{cla19}). In Section \ref{sec11}, we provide certain conditions for a self-similar graph $(G,\La)$ to insure that the action $\theta :\sg \curvearrowright \widehat{\mathcal{E}}_{\mathrm{tight}}(\sg)$ satisfies Condition (S), and we then obtain a result for the simplicity of $\og$ in the non-Hausdorff case.


\section{Preliminaries}\label{sec2}

\subsection{Higher-rank graphs}

Let $\N=\{0,1,2,\ldots\}$. Fixing an integer $k\geq 1$, we consider $\N^k$ as a semigroup under pointwise addition with the generators $e_1,\ldots,e_n$ and the identity $0:=(0,\ldots,0)\in \N^k$. For any $n\in \N^k$, we write $n=(n_1,\ldots,n_k)$ and we may define the partial order $\leq$ on $\N^k$ given by $m\leq n$ if $m_i\leq n_i$ for all $1\leq i \leq k$. We also write $m\vee n$ and $m\wedge n$ for the coordinate-wise maximum and minimum, respectively.

Following \cite{kum00}, a \emph{$k$-graph} $\La=(\La^0,\La,r,s)$ is a countable small category $\La$ equipped with a \emph{degree functor} $d:\La \rightarrow \N^k$ satisfying the \emph{unique factorisation property}: if $\la\in\La$ and $d(\la)=m+n$ for $m,n\in\N^k$, then there exist unique $\mu,\nu\in \La$ such that $d(\mu)=m$, $d(\nu)=n$ and $\la=\mu \nu$. We usually denote $\la(0,m):=\mu$ and $\la(m,d(\la)):=\nu$. Using the 1-skeleton of $k$-graphs, the elements $v\in \La^0$ are usually considered as vertices and $\la\in\La^n:=d^{-1}(n)$ as a path (of rank $k$) from $s(\la)$ to $r(\la)$ with degree $n\in \N^k$.

For each $\mu\in \La$, let $\mu\La:=\{\mu\nu: \nu\in \La, r(\nu)=s(\mu)\}\subseteq \La$ and $\mu\La^n:=\mu\La \cap \La^n.$ A \emph{source} is a vertex $v\in \La^0$ such that $|v\La^{e_i}|=0$ for some $1\leq i\leq k$. Also, $\La$ is called to be \emph{row-finite} whenever $|v\La^{e_i}|<\infty$ for all $v\in \La^0$ and $1\leq i\leq k$. Given $\mu,\nu\in \La$, a \emph{minimal common extension} for $\mu$ and $\nu$ is a path $\la\in \mu\La \cap \nu\La$ such that $d(\la)=d(\mu)\vee d(\nu)$. We denote by $\mathrm{MCE}(\mu,\nu)$ the set of all minimal common extensions of $\mu$ and $\nu$. We also denote
$$\La^{\mathrm{min}}(\mu,\nu):=\left\{(\alpha,\beta)\in\La\times \La: \mu\alpha=\nu\beta\in \mathrm{MCE}(\mu,\nu)\right\}$$
and if $\mu\in\La$, $X\subseteq \La$, then
$$\mathrm{Ext}(\mu;X):=\bigcup_{\nu\in X}\left\{\alpha:(\alpha,\beta)\in \La^{\mathrm{min}}(\mu,\nu) \mathrm{~for ~some~} \beta\in\La \right\}.$$

\begin{defn}[{\cite[Definition 2.2]{rae04}}]
A $k$-graph $\La$ is called \emph{finitely aligned} if $\La^{\mathrm{min}}(\mu,\nu)$ is finite (possibly empty) for all $\mu,\nu\in\La$.
\end{defn}

Let $\La$ be a finitely aligned $k$-graph and $v\in \La^0$. A subset $X\subseteq v\La \setminus \{v\}$ is called \emph{exhaustive} if for each $\mu\in v\La$, there exists $\nu\in X$ such that $\La^{\mathrm{min}}(\mu,\nu)\neq \emptyset$. Let us denote by $\mathbf{FE}(\La)$ the collection of all finite and exhaustive sets in $\La$, that is
$$\mathbf{FE}(\La):=\bigcup_{v\in\La^0}\left\{X\subseteq v\La\setminus \{v\}: X \mathrm{~is~finite ~ and ~ exhaustive} \right\}.$$

\begin{defn}[{\cite{rae04}}]\label{defn2.2}
Let $\La$ be a finitely aligned $k$-graph. A \emph{Cuntz-Krieger $\La$-family} (or briefly \emph{$\La$-family}) is a collection $\{s_\la: \la\in \La\}$ of partial isometries in a $C^*$-algebra satisfying the following relations:
\begin{enumerate}[(CK1)]
  \item $s_v s_w=\delta_{v,w} s_v$ for all $v,w\in\La^0$,
  \item $s_\mu s_\nu=s_{\mu \nu}$ for all $\mu,\nu\in\La$ with $s(\mu)=r(\nu)$,
  \item $s_{\mu}^* s_\nu=\sum_{(\alpha,\beta)\in \La^{\mathrm{min}}(\mu,\nu)} s_\alpha s_{\beta}^*$ for all $\mu,\nu\in \La$, and
  \item $\prod_{\mu\in X}\left(s_v-s_\mu s_{\mu}^* \right)=0$ for all $v\in \La^0$ and $X\in v \mathbf{FE}(\La)$.
\end{enumerate}
\end{defn}

\subsection{Self-similar $k$-graphs and their $C^\ast$-algebras}
Let $\La$ be a finitely aligned $k$-graph. An \emph{automorphism of $\La$} is a bijection $\psi:\La\rightarrow \La$ such that $\psi(\La^n)\subseteq \La^n$ for every $n\in \N^k$ with $s\circ \psi=\psi \circ s$ and $r\circ \psi=\psi \circ r$. We write $\mathrm{Aut}(\La)$ for the group of automorphisms on $\La$. Also, if $G$ is a countable discrete group, an \emph{action} of $G$ on $\La$ is a group homomorphism $g\mapsto \psi_g$ from $G$ into $\mathrm{Aut}(\La)$.

\begin{defn}\label{defn2.3}
Let $\La$ be a finitely aligned $k$-graph and $G$ a discrete group with identity $e_G$. A triple $(G,\La,\varphi)$ is called a \emph{self-similar $k$-graph} whenever the following properties hold:
\begin{enumerate}
  \item $G$ acts on $\La$ by a group homomorphism $g\mapsto \psi_g$. We will write $g\cdot \mu$ for $\psi_g(\mu)$ to ease the notation.
  \item $\varphi:G\times \La\rightarrow G$ is a 1-cocycle for the above action $G$ on $\La$ such that for every $g\in G$, $\mu,\nu\in \La$ and $v\in \La^0$ we have
  \begin{itemize}
    \item[(a)] $\varphi(gh,\mu)=\varphi(g,h \cdot \mu)\varphi(h,\mu)  \hspace{5mm}$ (the 1-cocycle property),
    \item[(b)] $g\cdot (\mu\nu)=(g\cdot \mu)(\varphi(g,\mu) \cdot \nu) \hspace{5mm}$ (the self-similar equation),
    \item[(c)] $\varphi(g,\mu\nu)=\varphi(\varphi(g,\mu),\nu)$, and
    \item[(d)] $\varphi(g,v)=g$.
  \end{itemize}
\end{enumerate}
\end{defn}

If there is no any ambiguity, we often write a self-similar $k$-graph $(G,\La,\varphi)$ as $(G,\La)$. It is worth noting that for every $g\in G$ and $\mu\in\La$ we have
\begin{equation}\label{eq2.1}
\varphi(g^{-1},\mu)=\varphi(g,g^{-1} \cdot \mu)^{-1}
\end{equation}
by \cite[Lemma 3.5(iii)]{li21}.

{\bf Standing assumption.} Throughout the paper, we work with self-similar $k$-graphs $(G,\La)$ over finitely aligned $k$-graphs $\La$.

\begin{defn}\label{defn2.4}
Let $(G,\La)$ be a self-similar $k$-graph. A \emph{$(G,\La)$-family} is a set
$$\{s_\mu:\mu\in \La\}\cup\{u_{v,g}:v\in\La^0,g\in G\}$$
in a $C^*$-algebra satisfying the following relations:
\begin{enumerate}[(1)]
  \item $\{s_\mu:\mu\in\La\}$ is a Cuntz-Krieger $\La$-family,
  \item $u_{v,e_G}=s_v$ for all $v\in\La^0$,
  \item $u_{v,g}^*=u_{g^{-1} \cdot v ,g^{-1}}$ for all $v\in\La^0$ and $g\in G$,
  \item $u_{v,g}s_\mu=\delta_{v,g\cdot r(\mu)}s_{g\cdot \mu}u_{g\cdot s(\mu),\varphi(g,\mu)}$ for all $v\in\La^0$, $\mu\in\La$, and $g\in G$,
  \item $u_{v,g}u_{w,h}=\delta_{v,g\cdot w}u_{v,gh}$ for all $v,w\in\La^0$ and $g,h\in G$.
\end{enumerate}
The $C^*$-algebra $\og$ associated to $(G,\La)$ is the universal $C^*$-algebra generated by a $(G,\La)$-family $\{s_\mu,u_{v,g}\}$.
\end{defn}

\begin{rem}
Whenever $(G,\La)$ is a self-similar $k$-graph with a row-finite, source-free $k$-graph $\La$, we will explain in Remark \ref{rem4.8} below that our definition for $\og$ in Definition \ref{defn2.4} coincides with that of \cite[Definition 3.9]{li21-ideal}.
\end{rem}

\begin{prop}\label{prop2.6}
Let $(G,\La)$ be a self-similar $k$-graph. Then
\begin{equation}\label{equ1.1}
\og=\overline{\mathrm{span}}\left\{s_\mu u_{s(\mu),g} s_\nu^*: g\in G,~ \mu,\nu\in \La,~ \mathrm{and}~ s(\mu)=g\cdot s(\nu)\right\}.
\end{equation}
\end{prop}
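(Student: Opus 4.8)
The plan is to show that the right-hand side of \eqref{equ1.1} is a closed linear subspace of $\og$ closed under multiplication and adjoint, hence a $C^*$-subalgebra, and that it contains all the generators $s_\mu$ and $u_{v,g}$; since $\og$ is generated by a $(G,\La)$-family, equality follows. Write $M:=\overline{\mathrm{span}}\{s_\mu u_{s(\mu),g}s_\nu^*\}$ for the claimed spanning set. That $M$ contains every $s_\mu$ is immediate from (2) of Definition \ref{defn2.4} ($u_{s(\mu),e_G}=s_{s(\mu)}$, so $s_\mu u_{s(\mu),e_G}s_{s(\mu)}^*=s_\mu s_{s(\mu)}s_{s(\mu)}^*=s_\mu$ using (CK1)--(CK2)); that it contains each $u_{v,g}$ follows by taking $\mu=v$, $\nu=g^{-1}\cdot v$, noting $s(v)=v=g\cdot(g^{-1}\cdot v)=g\cdot s(\nu)$, so $s_v u_{v,g} s_{g^{-1}\cdot v}^*=u_{v,g}u_{g^{-1}\cdot v,g^{-1}\cdot v}^{\,*}\cdots$ — more cleanly, $s_v u_{v,g} s_{g^{-1}\cdot v}^* = u_{v,g}$ since $u_{v,g}=s_v u_{v,g}$ (by (5) with $h=e_G$, or by $u_{v,g}=u_{g\cdot(g^{-1}\cdot v),g}$ and using that $u_{v,g}$ has range projection $s_v$) and $u_{v,g}s_{g^{-1}\cdot v}^*=u_{v,g}u_{g^{-1}\cdot v,e_G}=u_{v,g}$ by (4) with $\mu$ a vertex, or directly (5). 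Closure under adjoint is easy: $(s_\mu u_{s(\mu),g}s_\nu^*)^* = s_\nu u_{s(\mu),g}^* s_\mu^* = s_\nu u_{g^{-1}\cdot s(\mu),g^{-1}}s_\mu^*$ by (3), and since $s(\mu)=g\cdot s(\nu)$ we have $g^{-1}\cdot s(\mu)=s(\nu)=s(\nu)$ and $s(\mu)=g\cdot s(\nu)$, i.e. the roles of $\mu,\nu$ and $g,g^{-1}$ are swapped consistently, so the adjoint again lies in the spanning family.

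The heart of the argument is closure under multiplication: I need to compute a product $(s_\mu u_{s(\mu),g}s_\nu^*)(s_\zeta u_{s(\zeta),h}s_\eta^*)$ and express it in $M$. The key intermediate step is the relation $s_\nu^* s_\zeta$, which by (CK3) equals $\sum_{(\alpha,\beta)\in\La^{\mathrm{min}}(\nu,\zeta)} s_\alpha s_\beta^*$; finiteness of this sum is exactly where finite alignment of $\La$ is used. So the product becomes a finite sum of terms $s_\mu u_{s(\mu),g} s_\alpha s_\beta^* u_{s(\zeta),h} s_\eta^*$ with $\nu\alpha=\zeta\beta\in\mathrm{MCE}(\nu,\zeta)$. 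Now I push $u_{s(\mu),g}$ past $s_\alpha$ using (4): $u_{s(\mu),g}s_\alpha = \delta_{s(\mu),g\cdot r(\alpha)} s_{g\cdot\alpha} u_{g\cdot s(\alpha),\varphi(g,\alpha)}$, and since $r(\alpha)=s(\nu)$ and $s(\mu)=g\cdot s(\nu)$ the Kronecker delta is satisfied automatically, giving $s_{\mu(g\cdot\alpha)}u_{g\cdot s(\alpha),\varphi(g,\alpha)}$ after (CK2). Dually, using (3) to rewrite $s_\beta^* u_{s(\zeta),h} = (u_{s(\zeta),h}^* s_\beta)^* = (u_{h^{-1}\cdot s(\zeta),h^{-1}} s_\beta)^*$, apply (4) again and then (3) once more to reexpress it as $u_{?,?} s_{h^{-1}\cdot\beta}^*$ for the appropriate group element (this is where \eqref{eq2.1} and the cocycle identities (a)--(d) come in to identify the group element correctly). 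Finally the two surviving $u$'s in the middle multiply via (5) into a single $u_{w,g'}$ for suitable $w,g'$, and one checks the source-compatibility condition $s(\mu(g\cdot\alpha))=g'\cdot s(\eta(h^{-1}\cdot\beta))$ holds, using that $s(\alpha)$ and $s(\beta)$ coincide (both equal $s$ of the common extension $\nu\alpha=\zeta\beta$) and tracking the cocycle values. Each resulting summand then has the form $s_{\mu(g\cdot\alpha)} u_{\cdot,\cdot} s_{\eta(h^{-1}\cdot\beta)}^*$, a typical element of the spanning set, so the product lies in $M$.

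The main obstacle I anticipate is purely bookkeeping: correctly identifying the group element $g'$ in the final $u_{w,g'}$ and verifying that the source-compatibility constraint $s(\mu(g\cdot\alpha)) = g' \cdot s(\eta(h^{-1}\cdot\beta))$ is automatically satisfied. This requires careful, repeated use of the cocycle axioms (a) and (c), the self-similar equation (b), the identity \eqref{eq2.1}, and the relation (4) in the form that handles adjoints. There is a real risk of sign/inverse errors in the exponents of $\varphi$, so I would organize the computation by first establishing a clean auxiliary lemma: for $\mu,\nu$ with $s(\mu)=g\cdot s(\nu)$ and any $\zeta$ with $r(\zeta)=s(\nu)$,
\begin{equation*}
s_\nu^* \, s_\zeta \, = \!\!\sum_{(\alpha,\beta)\in\La^{\mathrm{min}}(\nu,\zeta)}\!\! s_\alpha s_\beta^*,
\qquad
u_{s(\mu),g}\, s_\alpha \, = \, s_{g\cdot\alpha}\, u_{g\cdot s(\alpha),\,\varphi(g,\alpha)},
\end{equation*}
and then assembling these mechanically. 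Everything else — closure under adjoint, containment of generators, closedness of the span — is routine, so density of the span in $\og$ is really just these computations together with the fact that the generators generate.
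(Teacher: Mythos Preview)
Your approach is correct and is essentially the same as the paper's: show that the span is a $*$-subalgebra containing the generators, with the multiplication step handled via (CK3) together with relation (4) and the cocycle identities. The paper's own proof simply cites \cite[Proposition 2.7 and Lemma 2.6]{lar23} for $M^*=M$, $M^2\subseteq M$, and the automatic constraint $s(\mu)=g\cdot s(\nu)$ on nonzero spanning elements, whereas you have sketched out those computations explicitly; the substance is the same.
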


\begin{proof}
If define $M:=\mathrm{span}_{\C}\{s_\mu u_{s(\mu),g} s_\nu^*: g\in G,~ \mu,\nu\in \La\}$, then proof of \cite[Proposition 2.7]{lar23} implies that $M^*=M$ and $M^2\subseteq M$. Hence $\overline{M}$ is a $C^*$-subalgebra of $\og$. Moreover, by \cite[Lemma 2.6]{lar23}, for nonzero elements $s_\mu u_{s(\mu),g} s_\nu^*$ in $M$ we must have $s(\mu)=g\cdot s(\nu)$. Now the fact that $M$ contains the generators of $\og$ concludes the identification (\ref{equ1.1}).
\end{proof}

\subsection{Groupoid $C^*$-algebras}

In this paper, we work with a specific kind of topological groupoids, namely \emph{tight groupoids of germs}, which will be defined in the Subsection \ref{sec2.4}. Let us give here a brief introduction to topological groupoids and the associated $C^*$-algebras; for more details see \cite{ren80,ana00}.

A {\it groupoid} is a small category $\mathcal{G}$ with an inverse map $\alpha\mapsto \alpha^{-1}$. One may define the range map $r(\alpha)=\alpha \alpha^{-1}$ and the source map $s(\alpha)=\alpha^{-1}\alpha$ for all $\alpha\in \mathcal{G}$, which satisfy $r(\alpha)\alpha=\alpha=\alpha s(\alpha)$. So, for every morphisms $\alpha,\beta\in \mathcal{G}$, the composition $\alpha \beta$ is well-defined if and only if $s(\alpha)=r(\beta)$. Also, the set of identity morphisms is called {\it the unit space of $\mathcal{G}$}, that is $\mathcal{G}^{(0)}:=\{\alpha^{-1}\alpha:\alpha\in \mathcal{G}\}$.

We will only deal with {\it topological groupoids} $\mathcal{G}$ such that multiplication, inverse, the range and source maps are continuous. In this case, $\mathcal{G}$ is called an {\it \'{e}tale groupoid} if its topology is locally compact and $r,s$ are local homeomorphisms. Then a {\it bisection} is a subset $B\subseteq \mathcal{G}$ such that the restricted maps $r|_B$ and $s|_B$ are both homeomorphisms. We say that $\mathcal{G}$ is {\it ample} in case the topology on $\mathcal{G}$ is generated by a basis of compact open bisections.

Since all tight groupoids of inverse semigroups in this paper are ample, we recall now the full and reduced $C^*$-algebras only for ample groupoids. Fixing an ample groupoid $\mathcal{G}$, write $C_c(\mathcal{G})$ for the $*$-algebra of compactly supported continuous functions $f:\mathcal{G} \rightarrow \mathbb{C}$ with the convolution multiplication and the involution $f^*(\alpha):=\overline{f(\alpha^{-1})}$. Given any unit $u\in \mathcal{G}^{(0)}$, define the left regular $*$-representation $\pi_u:C_c(\mathcal{G})\rightarrow B(\ell^2(\mathcal{G}_u))$ by
$$\pi_u(f)\delta_\alpha:=\sum_{s(\beta)=r(\alpha)}f(\beta) \delta_{\beta \alpha} \hspace{5mm} (f\in C_c(\mathcal{G}),~\alpha\in \mathcal{G}_u),$$
where $\mathcal{G}_u:=s^{-1}(\{u\})$. Then the {\it reduced $C^*$-algebra $C^*_r(\mathcal{G})$} is the completion of $C_c(\mathcal{G})$ under the reduced $C^*$-norm
$$\|f\|_r:=\sup_{u\in \mathcal{G}^{(0)}}\|\pi_u(f)\|.$$
Moreover, there is a {\it full $C^*$-algebra $C^*(\mathcal{G})$} associated to $\mathcal{G}$, which is the completion of $C_c(\mathcal{G})$ taken over all $\|.\|_{C_c(\mathcal{G})}$-decreasing representations of $\mathcal{G}$. Hence, $C^*_r(\mathcal{G})$ is a quotient of $C^*(\mathcal{G})$, and \cite[Proposition 6.1.8]{ana00} says that they are equal whenever $\mathcal{G}$ is amenable.

\subsection{Inverse semigroups and associated germ groupoids}\label{sec2.4}

An \emph{inverse semigroup} is a semigroup $S$ such that for each $s\in S$, there is a unique $s^*\in S$ satisfying
$$s=s s^* s \hspace{5mm} \mathrm{and} \hspace{5mm} s^*=s^*s s^*.$$
In particular, we have $e^*=e$ for all idempotents $e=e^2$ in $S$ because $e=eee$. Let us denote by $\mathcal{E}(S)$ the set of all idempotents in $S$, which is a commutative semilattice by defining $e \wedge f:=ef$. Given $e,f\in \mathcal{E}(S)$, we say that $e$ intersects $f$, denoted by $e\Cap f$, when $ef\neq 0$. There is a natural partial order on $S$ by $s\leq t$ if $s=te$ for some $e\in \mathcal{E}(S)$; in case $s\in \mathcal{E}(S)$, we have $s\leq t ~ \Longleftrightarrow ~ st=s$. According to this partial order, for any $s\in S$, we will use the notation
$$s\uparrow:=\{t\in S:t\geq s\} ~~ \mathrm{and} ~~ s\downarrow:=\{t\in S:t \leq s\}.$$

\begin{defn}
Let $S$ be an inverse semigroup containing a zero element $0$ (in the sense $0s=s0=0$ for every $s\in S$). A {\it filter} in $\mathcal{E}(S)$ is a nonempty set $\mathcal{F}\subseteq \mathcal{E}(S)$ that is closed under multiplication and satisfies $s \in \mathcal{F} ~~ \Rightarrow s\uparrow \subseteq \mathcal{F}$. A proper maximal filter in $\mathcal{E}(S)$ is called an {\it ultrafilter}. The set of all filters (ultrafilters) without 0 in $\mathcal{E}(S)$ is denoted by $\widehat{\mathcal{E}}_0(S)$ ($\widehat{\mathcal{E}}_\infty(S)$ respectively).
\end{defn}

Note that we may equip $\widehat{\mathcal{E}}_0(S)$ with a locally compact and Hausdorff topology generated by neighborhoods of the form
$$N(e;e_1,\ldots ,e_n):=\{\mathcal{F}\in \widehat{\mathcal{E}}_0(S): e\in \mathcal{F}, e_1,\ldots,e_n\notin \mathcal{F}\} \hspace{5mm} (e,e_i\in \mathcal{E}(S)).$$

\begin{defn}[\cite{exe08}]
The {\it tight spectrum of $S$} (or {\it tight filter space}), denoted by $\widehat{\mathcal{E}}_{\mathrm{tight}}(S)$, is the closure of $\widehat{\mathcal{E}}_\infty(S)$ in $\widehat{\mathcal{E}_0}(S)$. Then each filter in $\widehat{\mathcal{E}}_{\mathrm{tight}}(S)$ is called a {\it tight filter}.
\end{defn}

We usually consider $\widehat{\mathcal{E}}_{\mathrm{tight}}(S)$ as a topological space equipped with the restricted topology from $\widehat{\mathcal{E}_0}(S)$.

According to \cite{exe08}, there is a natural action of $S$ on $\widehat{\mathcal{E}}_{\mathrm{tight}}(S)$, and hence one may construct its groupiod of germs. Briefly, given any $e\in \mathcal{E}(S)$, let
$$D^e:=\{\mathcal{F}\in \widehat{\mathcal{E}}_{\mathrm{tight}}(S): e\in \mathcal{F}\}.$$
Then the action $\theta: S\curvearrowright \widehat{\mathcal{E}}_{\mathrm{tight}}(S)$, $s\mapsto \theta_s$, is defined in terms of the maps $\theta_s:D^{s^* s}\rightarrow D^{s s^*}$ given by $\theta_s(\mathcal{F})=(s\mathcal{F}s^*)\uparrow$. Now consider the set
$$S*\widehat{\mathcal{E}}_{\mathrm{tight}}(S)=\{(s,\mathcal{F})\in S\times \widehat{\mathcal{E}}_{\mathrm{tight}}(S): s^*s\in \mathcal{F}\}.$$
For two elements $(s,\mathcal{F}),(t,\mathcal{F}')\in S*\widehat{\mathcal{E}}_{\mathrm{tight}}(S)$, define $(s,\mathcal{F})\sim (t,\mathcal{F}')$ whenever $\mathcal{F}=\mathcal{F}'$ and $se=te$ for some $e\in \mathcal{F}$. Write $[s,\mathcal{F}]$ for the equivalent class of $(s,\mathcal{F})$. Then the {\it tight groupoid associated to $S$} is the groupoid $\mathcal{G}_{\mathrm{tight}}(S)=S*\widehat{\mathcal{E}}_{\mathrm{tight}}(S)/\sim$ with the multiplication
$$[t,\theta_s(\mathcal{F})][s,\mathcal{F}]:=[ts,\mathcal{F}]$$
and the inverse map
$$[s,\mathcal{F}]^{-1}:=[s^*,\theta_s(\mathcal{F})].$$
Note that the range and source maps are
$$r([s,\mathcal{F}])=[ss^*,\theta_s(\mathcal{F})] \hspace{5mm} \mathrm{and} \hspace{5mm} s([s,\mathcal{F}])=[s^*s,\mathcal{F}]$$
and we may identify the unit space of $\mathcal{G}_{\mathrm{tight}}(S)$ with $\widehat{\mathcal{E}}_{\mathrm{tight}}(S)$ by $[e,\mathcal{F}]\mapsto \mathcal{F}$.

Moreover, there is a natural locally compact topology on $\widehat{\mathcal{E}}_{\mathrm{tight}}(S)$ generated by the sets of the form
$$\Theta(s,U)=\{[s,\mathcal{F}]\in \mathcal{G}_{\mathrm{tight}}(S):\mathcal{F}\in U\}$$
where $s\in S$ and $U\subseteq \widehat{\mathcal{E}}_{\mathrm{tight}}(S)$ is open, which makes $\mathcal{G}_{\mathrm{tight}}(S)$ as an \'{e}tale groupoid.

\subsection{Left cancelative small categories}\label{sec2.5}

In this subsection, we briefly review the notion of left cancelative small categories introduced in \cite{spi12,spi14,spi20}, the \emph{Zappa-Sz$\acute{e}$p product} $\La \rtimes^\varphi G$ of a self-similar $k$-graph $(G,\La,\varphi)$ \cite{bed18}, and the associated inverse semigroups $\mathcal{S}(\La \rtimes^\varphi G)$, $\mathcal{T}(\La \rtimes^\varphi G)$ \cite{ort20}.

Given a small category $\ca$, we denote by $\ca^0$ the set of objects in $\ca$ which may be identified with the identity morphisms. So, $\ca^0\subseteq \ca$. If $r(\alpha)$ and $s(\alpha)$ are the range and source of each $\alpha$ in $\ca$ respectively, then product $\alpha \beta$ in $\ca$ is defined if and only if $s(\alpha)=r(\beta)$. We say that $\ca$ is \emph{left cancelative} whenever for every $\alpha,\beta,\gamma\in \ca$ with $s(\alpha)=r(\beta)=r(\gamma)$, $\alpha \beta=\alpha \gamma$ implies $\beta=\gamma$. \emph{Right cancelation} is defined in a similar way. The expression ``left cancelative small category" is usually abbreviated by ``LCSC" in the literature.

\begin{defn}
An LCSC $\ca$ is called \emph{finitely aligned} if for every $\alpha,\beta\in \ca$ there exists a finite subset $\Gamma \subseteq \ca$ such that $\alpha \ca \cap \beta \ca =\cup_{\gamma \in \Gamma} \gamma \ca$.
\end{defn}

Let $(G,\La,\varphi)$ be a self-similar $k$-graph as in Definition \ref{defn2.3}. Following \cite[Proposition 4.6]{bed18}, one may associate the Zappa-Sz$\acute{\mathrm{e}}$p product $\La \rtimes^\varphi G:=\La \times G$ as a small category where $(\La \rtimes^\varphi G)^0=\La^0 \times \{e_G\}$ and the range and source of $(\mu,g)\in \La \rtimes^{\varphi} G$ are defined by
$$r(\mu,g):=(r(\alpha),e_G) \hspace{5mm} \mathrm{and} \hspace{5mm} s(\mu,g):=(g^{-1}\cdot s(\mu), e_G).$$
Moreover, \cite[Propositions 4.6 and 4.13]{bed18} imply that $\La \rtimes^\varphi G$ is left cancelative and finitely aligned as $\La$ is (the left cancelation of $\La$ follows from the unique factorization property). However, $\La \rtimes^\varphi G$ is not necessarily right cancelative (see the last paragraph of Section \ref{sec5}).

There are several inverse semigroups associated to an LCSC \cite{bed18,ort20,li23} which may be considered in particular for the Zappa-Sz$\acute{\mathrm{e}}$p products $\La \rtimes^\varphi G$. Here, we recall the ones $\mathcal{S}(\ca)$ and $\mathcal{T}(\ca)$ from \cite{ort20}. For a set $X$, let $\mathfrak{I}(X)$ be the symmetric inverse semigroup of partial bijections on $X$, that is
$$\mathfrak{I}(X)=\{f:Y\longrightarrow Z: Y,Z\subseteq X ~~ \mathrm{and} ~~ f ~~ \mathrm{is ~ a ~ bijection} \}$$
endowed with the multiplication
$$gf:= g \circ f: f^{-1}(r(f) \cap s(g)) \longrightarrow g(r(f) \cap s(g))$$
and the involution $f^*:=f^{-1}$.

\begin{defn}
Let $\ca$ be an LCSC. For any $\alpha\in \ca$, we define two elements $\sigma^\alpha$ and $\tau^\alpha$ in $\mathfrak{I}(\ca)$ by
\begin{enumerate}[(1)]
  \item $\sigma^\alpha: \alpha \ca \rightarrow s(\alpha)\ca$, $\alpha \beta\mapsto \beta$, and
  \item $\tau^\alpha:s(\alpha)\ca \rightarrow \alpha \ca$, $\beta\mapsto \alpha \beta.$
\end{enumerate}
Then the left cancelation of $\ca$ implies that $\sigma^\alpha = \sigma^\alpha \tau^\alpha \sigma^\alpha$ and $ \tau^\alpha=\tau^\alpha \sigma^\alpha\tau^\alpha$. So the set
$$\mathcal{S}(\ca):=\langle \sigma^\alpha,\tau^\alpha :\alpha \in \ca rgle \subseteq \mathfrak{I}(\ca)$$
with multiplication and involution induced from $\mathfrak{I}(\ca)$ is an inverse semigroup \cite[Lemma 2.3]{ort20}.
\end{defn}

In order to define the inverse semigroup $\mathcal{T}(\ca)$, we need an LCSC to be finitely aligned. Before that, we should recall some terminology. Two elements $s,t$ of an inverse semigroup $S$ are said to be \emph{compatible} if both $s^* t$ and $st^*$ are idempotents. \cite[Proposition 1.2.1]{law98} implies that if $f_1,\ldots,f_n$ are pairwise compatible elements in $\mathfrak{I}(X)$, then the \emph{least-upper bound} $\bigvee_{i=1}^n f_i$ (according to the natural partial order $\leq$ in $\mathfrak{I}(X)$ as an inverse semigroup) exists and belongs to $\mathfrak{I}(X)$.

\begin{defn}[{\cite[Definition 2.7]{ort20}}]
If $\ca$ is a finitely aligned LCSC, we define
$$\mathcal{T}(\ca)=\left\{\bigvee_{i=1}^n \tau^{\alpha_i}\sigma^{\beta_i}: \{\tau^{\alpha_i}\sigma^{\beta_i}\}_{i=1}^n\subseteq \mathcal{S}(\ca) ~~ \mathrm{are ~ pairwise ~ compatible} \right\}$$
as a subset of $\mathfrak{I}(\ca)$.
\end{defn}

\cite[Lemma 2.8]{ort20} says that $\mathcal{T}(\ca)$ is the smallest finitely complete, finitely distributive inverse semigroup containing $\mathcal{S}(\ca)$. In the sequel, we will consider the inverse semigroups $\mathcal{S}(\La \rtimes^\varphi G)$ and $\mathcal{T}(\La \rtimes^\varphi G)$ of a Zappa-Sz$\acute{\mathrm{e}}$p product $\La \rtimes^\varphi G$.


\section{An inverse semigroup associated to $(G,\Lambda)$}\label{sec3}

Let $(G,\La,\varphi)$ be a self-similar $k$-graph. The aim of this section is to associate an appropriate inverse semigroup $\sg$ to $(G,\La,\varphi)$ such that the tight $C^*$-algebra of $\sg$ is isomorphic to $\og$ (Theorem \ref{thm4.7}). In light of Proposition \ref{prop2.6}, in order to define the desired $\sg$ we only consider triples $(\mu,g,\nu)$ in $\La\times G \times \La$ satisfying $s(\mu)=g \cdot s(\nu)$.

{\bf Property $(\perp)$}: Two triples $(\mu,g,\nu)$ and $(\xi,h,\eta)$ are called {\it orthogonal}, denoted by $(\mu,g,\nu)\perp (\xi,h,\eta)$, if $\Lam(\mu,\xi)=\Lam(\nu,\eta)=\emptyset$.

\begin{defn}\label{defn3.1}
Let $\sg$ be the collection of all finite sets $F$ containing pairwise orthogonal triples $(\mu,g,\nu)$ satisfying $s(\mu)=g \cdot s(\nu)$. We equip $\sg$ with the multiplication
\begin{equation}\label{eq3.1}
EF:=\bigcup_{\substack{(\mu,g,\nu)\in E \\ (\xi,h,\eta)\in F}} \left\{ (\mu(g\cdot\alpha), \varphi(g,\alpha)\varphi(h,h^{-1}\cdot\beta), \eta(h^{-1}\cdot\beta)):(\alpha,\beta)\in\Lam(\nu,\xi) \right\}
\end{equation}
and the inverse
$$F^*:=\left\{(\nu,g^{-1},\mu):(\mu,g,\nu)\in F\right\}$$
for all $E,F\in \sg$.
\end{defn}

Given any $E,E\in \sg$, orthogonality of elements in $E$, and also in $F$, implies that triples of $EF$ in (\ref{eq3.1}) are pairwise orthogonal. Furthermore, for $(\mu,g,\nu)\in E$, $(\xi,h,\eta)\in F$ and $(\alpha,\beta)\in\Lam(\nu,\xi)$, applying \cite[Lemma 3.5(i)]{li21} gives
\begin{align*}
\varphi(g,\alpha)\varphi(h,h^{-1}\cdot \beta)\cdot s(\eta(h^{-1}\cdot \beta)) & =\varphi(g,\alpha)\cdot (h\cdot s(h^{-1}\cdot \beta)) \\
& =g\cdot s(\beta)\\
& =g \cdot s(\xi\beta)\\
& =g \cdot s(\nu\alpha)\\
& =s(g \cdot \alpha)\\
& =s(\mu(g\cdot \alpha)).
\end{align*}
Hence, $EF$ defined in (\ref{eq3.1}) is an element of $\sg$ as well, so multiplication on $\sg$ is well-defined.

Note that the definition of $\sg$ in Definition \ref{defn3.1} substantially generalizes that of $\mathcal{S}_\La$ in \cite[Section 4]{far05} to self-similar $k$-graphs. In particular, if $G=\{e_G\}$ is the trivial group, then $\sg=\mathcal{S}_\La$.

\begin{rem}\label{rem3.2}
The property ($\perp$) says that for every $(\mu,g,\nu),(\xi,h,\eta)\in F$ we have
$$\Lam(\mu,\xi)\neq \emptyset ~~ \Longleftrightarrow ~~ (\mu,g,\nu)=(\xi,h,\eta) ~~ \Longleftrightarrow ~~ \Lam(\nu,\eta)\neq \emptyset.$$
\end{rem}

\begin{lem}\label{lem3.3}
The multiplication on $\sg$ is associative.
\end{lem}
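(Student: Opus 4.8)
The plan is to verify directly that $(EF)H = E(FH)$ for all $E, F, H \in \sg$, by unwinding the definition \eqref{eq3.1} twice on each side and exhibiting a bijection between the index sets of the two resulting families of triples. Since the multiplication only combines triples pairwise, it suffices to prove associativity for singletons $E = \{(\mu,g,\nu)\}$, $F = \{(\xi,h,\eta)\}$, $H = \{(\zeta,k,\rho)\}$; the general case then follows because $\sg$-multiplication distributes over the finite unions defining arbitrary elements, and orthogonality guarantees the pieces do not interfere. So the whole lemma reduces to a single (admittedly intricate) identity about one triple from each factor.

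For the singleton computation, expanding $(EF)H$ first requires choosing $(\alpha,\beta) \in \Lam(\nu,\xi)$, producing the intermediate triple $(\mu(g\cdot\alpha),\, \varphi(g,\alpha)\varphi(h,h^{-1}\cdot\beta),\, \eta(h^{-1}\cdot\beta))$, and then choosing a pair in $\Lam(\eta(h^{-1}\cdot\beta),\, \zeta)$ to multiply against $H$. Expanding $E(FH)$ instead starts with a pair in $\Lam(\eta,\zeta)$, forms the intermediate triple in $FH$, and then pairs it against $E$ via $\Lam(\nu,\,\cdot)$. The key step is to set up a bijection between
\[
\{(\alpha,\beta)\in\Lam(\nu,\xi)\} \times \{\text{pairs in }\Lam(\eta(h^{-1}\cdot\beta),\zeta)\}
\]
and the analogous set on the other side. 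This is exactly the kind of bookkeeping that underlies associativity of $\mathcal{S}_\La$ in \cite{far05}, and the natural tool is the factorization property for minimal common extensions: a common extension of $\nu$, $\xi$ and (a translate of) $\zeta$ is built up coordinatewise, and can be decomposed in either order. I would phrase this as a lemma identifying, for the relevant paths, $\mathrm{MCE}$ of three paths with iterated $\mathrm{MCE}$ of two, and then push everything through.

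The main obstacle — and the reason this is not merely a citation of \cite{far05} — is tracking the group elements and the cocycle $\varphi$ through the nested minimal common extensions. One must repeatedly invoke the self-similar equation Definition \ref{defn2.3}(b), the cocycle identities (a) and (c), and \eqref{eq2.1}, to check that after the dust settles the $G$-component of a triple in $(EF)H$ really does match that of the corresponding triple in $E(FH)$. Concretely, when a pair $(\alpha',\beta') \in \Lam(\eta(h^{-1}\cdot\beta),\zeta)$ is rewritten in terms of data attached to $\eta$ and $\zeta$ directly, the element $h^{-1}\cdot\beta$ must be pulled out using $g\cdot(\mu\nu) = (g\cdot\mu)(\varphi(g,\mu)\cdot\nu)$, and the accumulated cocycle factors $\varphi(g,\alpha)\varphi(h,h^{-1}\cdot\beta)$ must be recombined via (a) and (c). I expect this verification to be routine but lengthy; the cleanest presentation is probably to first treat the group-free case (recovering the $\mathcal{S}_\La$ computation), isolating the combinatorial bijection on path-pairs, and then layer the $\varphi$-bookkeeping on top, checking the source/range compatibility conditions $s(\mu) = g\cdot s(\nu)$ are preserved at each stage (which the displayed computation after Definition \ref{defn3.1} already illustrates in miniature).
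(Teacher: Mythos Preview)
Your plan is correct and matches the paper's approach: the paper likewise reduces to singletons, expands both sides of $(EF)H = E(FH)$ as unions indexed by iterated $\Lam$-pairs, explicitly constructs the bijection between the two index sets (defining $\nu',\zeta',\alpha',\beta'$ from $\alpha,\beta,\nu,\zeta$ via the factorization property), and then verifies the three components of the resulting triples agree using exactly the cocycle identities (a), (c), the self-similar equation (b), and \eqref{eq2.1} that you flagged. The only cosmetic difference is that the paper carries out the path-bijection and the $\varphi$-bookkeeping simultaneously rather than in two layers.
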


\begin{proof}
It suffices for every singletons $\{(\la,g,\mu)$\}, $\{(\xi,h,\eta)\}$, and $\{(\tau,f,\omega)\}$ in $\sg$ to prove
$$\big[\{ (\la,g,\mu)\}\{(\xi,h,\eta)\}\big] \{(\tau,f,\omega)\}= \{(\la,g,\mu)\} \big[\{(\xi,h,\eta)\} \{(\tau,f,\omega)\}\big],$$
that is, by Definition \ref{defn3.1},
\begin{align}\label{eq3.2}
\bigcup_{\substack{(\alpha,\beta) \in \Lam(\mu,\xi)\\ (\nu,\zeta)\in  \Lam(\eta(h^{-1}\cdot\beta),\tau)}}  \big\{\big(\la(g\cdot\alpha) & (\varphi(g,\alpha)\varphi(h,h^{-1}\cdot\beta)\cdot\nu),
\nonumber \\
\varphi(\varphi(g,\alpha) & \varphi(h,h^{-1}\cdot\beta),\nu)\varphi(f,f^{-1}\cdot\zeta), \omega(f^{-1}\cdot\zeta) \big) \big\}
\nonumber\\
\nonumber \\
=\bigcup_{\substack{(\nu',\zeta') \in \Lam(\eta,\tau)\\ (\alpha',\beta')\in  \Lam(\mu,\xi(h\cdot\nu'))}}  \big\{\big(\la(g\cdot\alpha') & ,\varphi(g,\alpha')\varphi[\varphi(h,\nu')\varphi(f,f^{-1}\cdot\zeta'),
\nonumber\\
(\varphi(h,\nu') \varphi(f,f^{-1}\cdot\zeta'))^{-1} & \cdot\beta'], \omega(f^{-1}\cdot\zeta') ((\varphi(h,\nu')\varphi(f,f^{-1}\cdot\zeta'))^{-1}\cdot\beta')\big)\big\}.
\end{align}
Fix a triple in the left-hand side set of (\ref{eq3.2}) above. Since $(\alpha,\beta) \in \Lam(\mu,\xi)$ and $(\nu,\zeta)\in  \Lam(\eta(h^{-1}\cdot\beta),\tau)$, we have
\begin{equation}\label{eq3.3}
\mu\alpha=\xi \beta \hspace{10mm} \mathrm{and} \hspace{10mm} \eta(h^{-1}\cdot\beta)\nu=\tau\zeta
\end{equation}
with
\begin{equation}\label{eq3.4}
d(\mu\alpha)=d(\mu)\vee d(\xi) \hspace{5mm} \mathrm{and} \hspace{5mm} d(\eta(h^{-1}\cdot\beta)\nu)=d(\tau\zeta)=d(\eta(h^{-1}\cdot\beta))\vee d(\tau).
\end{equation}
If we define $\nu':=((h^{-1}\cdot\beta)\nu)(0,d(\eta)\vee d(\tau)-d(\eta))$ and $\zeta':=\zeta(0,(d(\eta)\vee d(\tau))-d(\tau))$, then $(\nu',\zeta')\in \Lam(\eta,\tau)$. Set also $\alpha':=\alpha(\varphi(h,h^{-1}\cdot\beta)\cdot\nu)$. Hence
\begin{align*}
\mu \alpha'&=\mu \alpha(\varphi(h,h^{-1}\cdot\beta)\cdot\nu) \\
&=\xi \beta(\varphi(h,h^{-1}\cdot\beta)\cdot\nu) \hspace{5mm} (\mathrm{by} ~ (\ref{eq3.3}))\\
&=\xi(h.((h^{-1}\cdot\beta)\nu))\\
&=\xi(h\cdot\nu')[(h\cdot(h^{-1}\cdot\beta)\nu)(d(\nu'),d((h^{-1}\cdot\beta)\nu))]\\
&=\xi(h\cdot\nu')[\varphi(h,\nu')\cdot\big(((h^{-1}\cdot\beta)\nu)(d(\nu'),d((h^{-1}\cdot\beta)\nu))\big)],
\end{align*}
meaning that $\mu\alpha'=\xi(h\cdot\nu')\beta'$ where
$$\beta':= \varphi(h,\nu')\cdot\big(((h^{-1}\cdot\beta)\nu)(d(\nu'),d((h^{-1}\cdot\beta)\nu))\big).$$

Observe that in fact we have $(\alpha',\beta')\in \Lam(\mu,\xi(h\cdot\nu'))$ because
\begin{align*}
d(\mu\alpha')&=d(\mu\alpha)+ d(\nu)\\
&=(d(\mu)\vee d(\xi)) + d(\nu) \hspace{5mm} (\mathrm{by} ~ (\ref{eq3.4}))\\
&=(d(\mu)\vee d(\xi))+\big(d(\eta(h^{-1}\cdot\beta))\vee d(\tau)\big)- d(\eta(h^{-1}\cdot\beta))  \hspace{5mm} (\mathrm{by} ~ (\ref{eq3.4}))\\
&=(d(\mu)\vee d(\xi))+(0\vee (d(\tau)-d(\eta)-d(h^{-1}\cdot\beta)))\\
&=(d(\mu)\vee d(\xi))+(0\vee (d(\tau)-d(\eta)-(d(\mu)\vee d(\xi))+d(\xi)))  \hspace{3mm} {\scriptstyle (\mathrm{as}~ d(h^{-1}\cdot\beta)=d(\beta))}\\
&=d(\mu)\vee d(\xi) \vee (d(\tau)-d(\eta)+d(\xi))\\
&=d(\mu)\vee (d(\xi)+(d(\eta)\vee d(\tau))-d(\eta)) \\
&=d(\mu)\vee (d(\xi)+ d(\nu'))  \hspace{30mm} {\scriptstyle (\mathrm{beacuse}~(\nu',\zeta')\in\Lam(\eta,\tau))} \\
&=d(\mu)\vee d(\xi(h\cdot\nu')).
\end{align*}
It is clear that
\begin{equation}\label{eq3.5}
\la(g\cdot\alpha)(\varphi(g,\alpha)\varphi(h,h^{-1}\cdot\beta)\cdot\nu)=\la(g \cdot (\alpha \varphi(h,h^{-1}\cdot\beta)\cdot\nu))=\la (g\cdot\alpha')
\end{equation}
and also
\begin{align}\label{eq3.6}
\omega(f'\cdot\zeta')[\varphi(h,\nu')\varphi(f,f^{-1}\cdot\zeta')]^{-1}\cdot\beta'&=\omega(f^{-1}\cdot\zeta') [\varphi(f,f^{-1}\cdot\zeta')^{-1}\varphi(h,\nu')^{-1}]\cdot\beta' \nonumber\\
&=\omega(f^{-1}\cdot\zeta')[\varphi(f^{-1},\zeta')\varphi(h^{-1},h^{-1}\cdot\nu')\cdot\beta'] \nonumber\\
&=\omega[f^{-1}.(\zeta'(\varphi(h,\nu')^{-1}\cdot\beta'))] \nonumber \\
&=\omega (f^{-1}.(\zeta'[((h^{-1}\cdot\beta)\nu)(d(\nu'),d((h^{-1}\cdot\beta)\nu))])) \nonumber \\
&=\omega(f^{-1}\cdot\zeta).
\end{align}

Furthermore, we can compute
\begin{align}\label{eq3.7}
\varphi(\varphi(g,\alpha)\varphi(h,h^{-1} & \cdot\beta), \nu)\varphi(f,f^{-1}\cdot\zeta)\\
\hspace{-7mm}
{\scriptstyle  (\mathrm{by ~ (2)(a) ~ of ~ Def.} ~ \ref{defn2.3})} \hspace{5mm} &=\varphi(\varphi(g,\alpha), \varphi(h,h^{-1}\cdot\beta)\cdot\nu)\varphi(\varphi(h,h^{-1}\cdot\beta),\nu)\varphi(f,f^{-1}\cdot\zeta) \nonumber\\
\hspace{-7mm}
{\scriptstyle (\mathrm{by  ~ (2)(c) ~ of ~ Def.} ~ \ref{defn2.3})} \hspace{5mm} &=[\varphi(g,\alpha(\varphi(h,h^{-1}\cdot\beta)\cdot\nu))]\varphi(h,(h^{-1}\cdot\beta)\nu) \varphi(f,f^{-1}\cdot\zeta)\nonumber\\
{\scriptstyle (\mathrm{by} ~ (\ref{eq2.1}))} \hspace{5mm}
&=\varphi(g,\alpha')\varphi(h,(h^{-1}\cdot\beta)\nu)\varphi(f^{-1},\zeta)^{-1}\nonumber\\
\hspace{-9mm}
{\scriptstyle (\mathrm{hint:} ~ \zeta=\zeta' (\varphi(h,\nu')^{-1}.\beta'))}\hspace{5mm} &=\varphi(g,\alpha')[\varphi(h,(\nu'\varphi(h,\nu')\cdot\beta'))]\varphi(f^{-1},\zeta)^{-1}\nonumber\\
\hspace{-7mm}
{\scriptstyle (\mathrm{by ~ (2)(c) ~ of ~ Def.} ~ \ref{defn2.3})} \hspace{5mm} &=\varphi(g,\alpha')[\varphi(\varphi(h,\nu'),\varphi(h,\nu')^{-1}\cdot\beta')][\varphi(\varphi(f^{-1},\zeta'),\varphi(h,\nu')^{-1}\cdot\beta')]^{-1} \nonumber\\
&=\varphi(g,\alpha')[\varphi(\varphi(h,\nu')^{-1},\beta')]^{-1}[\varphi(\varphi(f^{-1},\zeta'),\varphi(h,\nu')^{-1}\cdot\beta')]^{-1}\nonumber\\
&=\varphi(g,\alpha')[\varphi(\varphi(f^{-1},\zeta'),\varphi(h,\nu')^{-1}\cdot\beta') \varphi(\varphi(h,\nu')^{-1},\beta')]^{-1}\nonumber\\
&=\varphi(g,\alpha') [\varphi(\varphi(f,f^{-1}\cdot\zeta')^{-1},\varphi(h,\nu')^{-1}\cdot\beta') \varphi(\varphi(h,\nu')^{-1},\beta')]^{-1}\nonumber\\
\hspace{-7mm}
{\scriptstyle (\mathrm{by ~ (2)(a) ~ of ~ Def.} ~ \ref{defn2.3})} \hspace{5mm}
&=\varphi(g,\alpha') [\varphi(\varphi(f,f^{-1}\cdot\zeta')^{-1} \varphi(h,\nu')^{-1},\beta')]^{-1}\nonumber\\
\hspace{-7mm}
{\scriptstyle (\mathrm{by} ~ (\ref{eq2.1}))} \hspace{5mm}
&=\varphi(g,\alpha') [\varphi(\varphi(h,\nu')\varphi(f,f^{-1}\cdot\zeta'),\big(\varphi(h,\nu')\varphi(f,f^{-1}\cdot\zeta')\big)^{-1}\cdot\beta')]. \nonumber
\end{align}
Therefore, (\ref{eq3.5}), (\ref{eq3.6}) and (\ref{eq3.7}) imply that the set in the left-hand side of (\ref{eq3.2}) is contained in the right-hand side one. The reverse inclusion may be shown analogously, hence the result is proved.
\end{proof}

Idempotents play a key role to analyze inverse semigroups (in particular, to construct the associated groupoids of germs). In Corollary \ref{cor3.5} below we will see that the idempotents of $\sg$ coincides to those of $\mathcal{S}_\La$ the inverse semigroup of $\La$ in \cite{far05}.

\begin{lem}\label{lem3.4}
For any $F\in \sg$ we have
$$FF^*=\{(\mu,e_G,\mu):(\mu,g,\nu)\in F\}.$$
\end{lem}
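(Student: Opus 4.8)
The plan is to compute $FF^*$ directly from the definitions of multiplication and inverse in Definition \ref{defn3.1}. Writing $F = \{(\mu_i, g_i, \nu_i)\}_{i}$, we have $F^* = \{(\nu_i, g_i^{-1}, \mu_i)\}_i$, so
\[
FF^* = \bigcup_{i,j} \left\{ \big(\mu_i(g_i\cdot\alpha),\, \varphi(g_i,\alpha)\varphi(g_j^{-1}, g_j\cdot\beta),\, \mu_j(g_j\cdot\beta)\big) : (\alpha,\beta)\in\Lam(\nu_i,\nu_j) \right\}.
\]
First I would observe that, by the orthogonality property $(\perp)$ of the elements of $F$ (Remark \ref{rem3.2}), the set $\Lam(\nu_i,\nu_j)$ is nonempty if and only if $(\mu_i,g_i,\nu_i)=(\mu_j,g_j,\nu_j)$, i.e. $i=j$ (after identifying repeated entries). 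Hence only the diagonal terms survive, and the union reduces to
\[
FF^* = \bigcup_i \left\{ \big(\mu_i(g_i\cdot\alpha),\, \varphi(g_i,\alpha)\varphi(g_i^{-1}, g_i\cdot\beta),\, \mu_i(g_i\cdot\beta)\big) : (\alpha,\beta)\in\Lam(\nu_i,\nu_i) \right\}.
\]

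Next I would analyze $\Lam(\nu_i,\nu_i)$. Since $\nu_i\alpha = \nu_i\beta \in \mathrm{MCE}(\nu_i,\nu_i)$ forces $d(\nu_i\alpha) = d(\nu_i)\vee d(\nu_i) = d(\nu_i)$, we get $d(\alpha) = d(\beta) = 0$, so $\alpha = \beta = s(\nu_i)$ by the unique factorisation property; thus $\Lam(\nu_i,\nu_i) = \{(s(\nu_i), s(\nu_i))\}$. Substituting $\alpha = \beta = s(\nu_i)$, the first and third coordinates become $\mu_i(g_i\cdot s(\nu_i)) = \mu_i s(\mu_i) = \mu_i$, using the standing constraint $s(\mu_i) = g_i\cdot s(\nu_i)$. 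For the middle coordinate, I would use $\varphi(g_i, s(\nu_i)) = g_i$ (Definition \ref{defn2.3}(2)(d)) and $\varphi(g_i^{-1}, g_i\cdot s(\nu_i)) = \varphi(g_i^{-1}, s(\mu_i)) = g_i^{-1}$ (again (2)(d)), so the product is $g_i g_i^{-1} = e_G$. Therefore each surviving triple is $(\mu_i, e_G, \mu_i)$, giving
\[
FF^* = \{(\mu, e_G, \mu) : (\mu,g,\nu)\in F\},
\]
as claimed.

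The only genuinely delicate point is the bookkeeping in the first step: one must be careful that $F$ is a \emph{set} of pairwise orthogonal triples, so that when two triples $(\mu_i,g_i,\nu_i)$ and $(\mu_j,g_j,\nu_j)$ satisfy $\Lam(\nu_i,\nu_j)\neq\emptyset$ they are literally the same element of $F$, not merely paths with coinciding sources — this is exactly what Remark \ref{rem3.2} provides. Everything else is a routine application of the cocycle identities, principally $\varphi(g,v) = g$, together with the unique factorisation property to pin down $\Lam(\nu,\nu)$. I do not anticipate any real obstacle beyond keeping the indices straight.
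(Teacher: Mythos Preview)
Your proof is correct and follows essentially the same approach as the paper's: both use the orthogonality property (Remark \ref{rem3.2}) to reduce to diagonal terms, then compute $\Lam(\nu,\nu)=\{(s(\nu),s(\nu))\}$ and simplify. The paper's version is simply terser, leaving the explicit substitution and the use of $\varphi(g,v)=g$ implicit, whereas you spell these steps out in full.
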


\begin{proof}
Recall that $F^*=\{(\nu,g^{-1},\mu): (\mu,g,\nu)\in F\}$. If $(\mu,g,\nu),(\xi,h,\eta)\in F$ are distinct, then Remark \ref{rem3.2} says  $\Lam(\nu, \eta)=\emptyset$, while in the case $(\xi,h,\eta)=(\mu,g,\nu)$ we have $\Lam(\nu,\eta)=\Lam(\nu,\nu)=\{(s(\nu),s(\nu))\}$. Thus (\ref{eq3.1}) follows the result.
\end{proof}

\begin{cor}\label{cor3.5}
$\es$ is the collection of finite subsets $F$ of $\{(\mu,e_G,\mu): \mu\in \La\}$ satisfying the property $\Lam(\mu,\nu)=\emptyset$ for every distinct $(\mu,e_G,\mu), (\nu,e_G,\nu) \in F$.
\end{cor}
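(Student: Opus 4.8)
The plan is to derive Corollary \ref{cor3.5} directly from Lemma \ref{lem3.4}, using only the definition of idempotents and the involution on $\sg$. First I would observe that if $F \in \es$, then $F = FF^* $ (since $F$ is idempotent, $F = FF$ and hence $F = FF^*FF = \ldots$; more directly, $F = F^*$ for idempotents in any inverse semigroup, so $F = FF = FF^*$). Applying Lemma \ref{lem3.4} then forces $F = \{(\mu,e_G,\mu):(\mu,g,\nu)\in F\}$, so every triple in $F$ already has the form $(\mu,e_G,\mu)$. Combined with the standing requirement that $F$ is a finite set of pairwise orthogonal triples, and the fact that $(\mu,e_G,\mu)\perp(\nu,e_G,\nu)$ means precisely $\Lam(\mu,\nu)=\Lam(\mu,\nu)=\emptyset$ (the two orthogonality conditions collapse to one here since both coordinates agree), we see $F$ lies in the claimed collection.

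Conversely, I would take any finite subset $F$ of $\{(\mu,e_G,\mu):\mu\in\La\}$ with $\Lam(\mu,\nu)=\emptyset$ for distinct elements, and check it is idempotent. Such an $F$ is certainly a legitimate element of $\sg$: each triple $(\mu,e_G,\mu)$ satisfies $s(\mu)=e_G\cdot s(\mu)$, and the orthogonality hypothesis is exactly property $(\perp)$. To compute $FF$, I apply \eqref{eq3.1}: for triples $(\mu,e_G,\mu)$ and $(\nu,e_G,\nu)$ in $F$, the contribution is indexed by $(\alpha,\beta)\in\Lam(\mu,\nu)$, which is empty unless the triples coincide, in which case $\Lam(\mu,\mu)=\{(s(\mu),s(\mu))\}$. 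Plugging $(\alpha,\beta)=(s(\mu),s(\mu))$ and $g=h=e_G$ into the formula, and using $\varphi(e_G,\cdot)=e_G$ together with $\mu\,(e_G\cdot s(\mu))=\mu$, gives back the triple $(\mu,e_G,\mu)$. Hence $FF=F$, so $F\in\es$.

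The only mildly delicate point — and the step I would flag as needing care rather than as a genuine obstacle — is verifying that the general element $F\in\es$ is \emph{already} of the stated form, i.e.\ that one cannot have an idempotent $F$ containing some triple $(\mu,g,\nu)$ with $g\ne e_G$ or $\mu\ne\nu$. This is precisely what Lemma \ref{lem3.4} delivers: writing $F=FF^*$ and reading off $FF^* = \{(\mu,e_G,\mu):(\mu,g,\nu)\in F\}$, we conclude that the set $F$, as a set, equals $\{(\mu,e_G,\mu):(\mu,g,\nu)\in F\}$, which is a subset of $\{(\mu,e_G,\mu):\mu\in\La\}$. Everything else is a routine unwinding of Definition \ref{defn3.1}, so no substantial obstacle remains.
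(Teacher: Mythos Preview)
Your proposal is correct and follows exactly the route the paper intends: the corollary is stated without proof immediately after Lemma \ref{lem3.4}, and your argument is precisely the natural unpacking of that lemma together with the standard fact $e^*=e$ for idempotents (which the paper recalls in Subsection \ref{sec2.4}). Nothing further is needed.
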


Note that Corollary \ref{cor3.5} is analogous to \cite[Lemma 2.9]{ort20} for the inverse semigroup $\mathcal{T}(\La\rtimes^{\varphi} G)$, which says that every idempotent $f$ in $\mathcal{T}({\La\rtimes^{\varphi} G})$ is of the form
$$f=\bigvee_{i=1}^n \tau^{(\mu_i,g_i)}\sigma^{(\mu_i,g_i)}=\bigvee_{i=1}^n \tau^{(\mu_i,e_G)}\sigma^{(\mu_i,e_G)}$$
for some $\mu_1,\ldots,\mu_n\in \La$.

\begin{prop}
$\sg$, defined in Definition \ref{defn3.1}, is an inverse semigroup with the zero $0:=\emptyset$.
\end{prop}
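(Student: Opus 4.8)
The plan is to verify the inverse-semigroup axioms one at a time, leaning on the structural work already done. Associativity of the multiplication is Lemma \ref{lem3.3}, so that is in hand. It remains to confirm that $0:=\emptyset$ is a genuine zero, that every $F\in\sg$ has an inverse $F^*$ in the sense $F=FF^*F$ and $F^*=F^*FF^*$, and that this inverse is unique. None of these steps should present serious difficulty; the only mild subtlety is keeping track of how the defining data of $\sg$ (finiteness, pairwise orthogonality, the source condition $s(\mu)=g\cdot s(\nu)$) interacts with the formula \eqref{eq3.1} for products, but that bookkeeping was already carried out in the paragraph following Definition \ref{defn3.1} showing the product is well-defined.

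First I would dispose of the zero: for any $F\in\sg$, the product $\emptyset\,F$ (and $F\,\emptyset$) is an empty union over pairs of triples, hence equals $\emptyset$, so $0F=F0=0$. Next I would compute $FF^*$ and $F^*F$. By Lemma \ref{lem3.4}, $FF^*=\{(\mu,e_G,\mu):(\mu,g,\nu)\in F\}$, and applying the same lemma to $F^*$ (noting $(F^*)^*=F$) gives $F^*F=\{(\nu,e_G,\nu):(\mu,g,\nu)\in F\}$. Then I would multiply $FF^*$ by $F$ on the right. In the product $(FF^*)F$, the relevant pairs are $(\mu,e_G,\mu)\in FF^*$ against $(\xi,h,\eta)\in F$; by Remark \ref{rem3.2} and Property $(\perp)$, $\Lam(\mu,\xi)\neq\emptyset$ forces $(\mu,g,\nu)=(\xi,h,\eta)$ for the triple of $F$ giving rise to $(\mu,e_G,\mu)$, in which case $\Lam(\mu,\xi)=\Lam(\mu,\mu)=\{(s(\mu),s(\mu))\}$, and the formula \eqref{eq3.1} returns exactly $(\mu,h,\eta)=(\xi,h,\eta)$. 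Hence $(FF^*)F=F$. A symmetric computation (or applying the first identity to $F^*$ and taking $*$, using that $*$ is an involution reversing products, which follows directly from the definition of $F^*$ and Lemma \ref{lem3.3}) gives $F^*FF^*=F^*$.

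For uniqueness of the inverse, I would invoke the standard fact that in any semigroup where every element has at least one ``inverse'' in the above sense and all idempotents commute, the inverse is automatically unique; alternatively, since the axioms \eqref{eq3.1} make $\sg$ a subsemigroup of a symmetric-inverse-type structure, one can argue directly that the idempotents $\es$ form a commutative semilattice (immediate from Corollary \ref{cor3.5}, since multiplication of such idempotent sets is intersection-like and manifestly commutative), and a regular semigroup with commuting idempotents is inverse. I would phrase it using the commutativity of idempotents: if $F=FGF$ and $G=GFG$, and also $F=FHF$, $H=HFH$, then the usual three-line manipulation $G=GFG=G(FHF)G=(GF)(HF)(HG)$, rearranging commuting idempotents $GF, FH, HF, FG$, yields $G=H$.

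The main obstacle, such as it is, will simply be confirming that all intermediate products land back in $\sg$ — that is, that the triples produced remain pairwise orthogonal, finite in number, and satisfy the source constraint — but this is precisely the content of the well-definedness verification already completed after Definition \ref{defn3.1}, so it can be cited rather than repeated. I would close by noting $\emptyset\in\sg$ vacuously (the empty set is a finite set of pairwise orthogonal triples), so the zero indeed lies in $\sg$, completing the proof that $\sg$ is an inverse semigroup with zero.
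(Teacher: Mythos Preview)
Your proposal is correct. For associativity, the zero, and the identity $FF^*F=F$ you proceed essentially as the paper does (citing Lemma~\ref{lem3.3} and Lemma~\ref{lem3.4}, then reading off the product using Property~$(\perp)$). The genuine divergence is in the uniqueness step: the paper argues directly, fixing an arbitrary $E$ with $FEF=F$ and $EFE=E$, picking $(\xi,h,\eta)\in E$, and using orthogonality in $F$ to force $(\xi,h,\eta)\in F^*$; you instead appeal to the classical fact that a regular semigroup with commuting idempotents is inverse, reducing the problem to checking commutativity of $\es$ via Corollary~\ref{cor3.5}.

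Two remarks on your route. First, be mindful of a potential circularity: the ``idempotent $\Rightarrow$ diagonal'' direction of Corollary~\ref{cor3.5} is not literally immediate from Lemma~\ref{lem3.4} alone---in an arbitrary semigroup an idempotent need not satisfy $E=E^*$---so if you lean on Corollary~\ref{cor3.5} you should supply that direction independently (take $(\mu,g,\nu)\in E=E^2$, use $(\perp)$ to see both factors producing it must equal $(\mu,g,\nu)$ itself, whence $\mu=\nu$ and $g^2=g$ so $g=e_G$). This is essentially a miniature of the paper's direct argument, so your approach does not really avoid that work; it repackages it. Second, your sketched manipulation ``$G=GFG=G(FHF)G=(GF)(HF)(HG)$'' has a slip in the last expression (it should end in $G$, not $HG$); the intended rearrangement $(GF)(HF)G=(HF)(GF)G=HFG$, and symmetrically $H=GFH$, does go through. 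The paper's direct argument is more self-contained; yours is more modular and highlights why the result is an instance of a general principle.
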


\begin{proof}
By Lemma \ref{lem3.3}, the multiplication in $\sg$ is associative. Also, Lemma \ref{lem3.4} implies $FF^* F=F$ and $F^* F F^*=F^*$. So, it remains to prove that $F^*$ is the unique element of $\sg$ satisfying these last equalities.

To show this, we assume $E,F\in \sg$ with $FEF=F$ and $EFE=E$, and prove $E=F^*$. Fix an arbitrary triple $(\xi,h,\eta)\in E$. So, there exist $(\la,g,\mu),(\tau,f,\omega)\in F$ such that $\Lam(\mu,\xi)\neq \emptyset$ and $\Lam(\eta,\tau)\neq \emptyset$; because otherwise we will have either $\{(\xi,h,\eta)\}F=0$ or $F\{(\xi,h,\eta)\}=0$, and hence $EFE\subseteq E\setminus\{(\xi,h,\eta)\}$ in each case, a contradiction. Then
$$0 \neq \{(\la,g,\mu)\}\{(\xi,h,\eta)\}\{(\tau,f,\omega)\}\subseteq FEF=F,$$
which is
\begin{multline}\label{eq3.8}
\hspace{0mm} \bigcup_{\substack{(\nu,\zeta)\in \Lam(\eta(h^{-1}\cdot\beta),\tau)\\ (\alpha,\beta)\in \Lam(\mu,\xi)}}
\big\{ (\la(g\cdot\alpha)(\varphi(g,\alpha)\varphi(h,h^{-1}\cdot\beta)\cdot\nu), \\
\varphi(\varphi(g,\alpha)\varphi(h,h^{-1}\cdot\beta),\nu) \varphi(f,f^{-1}\cdot\zeta),\omega(f^{-1}\cdot\zeta)) \big\} \subseteq F.
\end{multline}

But, since $(\la,g,\mu),(\tau,f,\omega)\in F$, property ($\perp$) for $F$ implies that each element of the set in (\ref{eq3.8}) must be equal to $(\la,g,\mu)$, and also to $(\tau,f,\omega)$. So, $(\la,g,\mu)=(\tau,f,\omega)$ in particular. Moreover, it follows $\Lam(\mu,\xi)=\{(s(\mu),s(\xi))\}$ and $\Lam(\eta,\tau)=\{s(\eta),s(\tau)\}$, meaning that $\mu=\xi$ and $\eta=\tau=\la$ respectively. Consequently, every element in the left set of (\ref{eq3.8}) is of the form $(\la,ghf,\omega)$. Furthermore, since $(\la,g,\mu)$ lies in $F$ too, property ($\perp$) forces again $(\la,g,\mu)=(\la,ghf,\omega)$. Therefore, $h=f^{-1}=g^{-1}$ and $(\xi,h,\eta)=(\mu,g^{-1},\la)\in F^*$, concluding $E\subseteq F^*$. The reverse inclusion may be shown analogously. The proof is completed.
\end{proof}


\section{Tight $C^\ast$-algebra of $\sg$}\label{sec4}

The tight $C^*$-algebra of an inverse semigroup $S$ with $0$, denoted by $C^*_{\mathrm{tight}}(S)$, is introduced in \cite{exe08}, which is generated by a universal tight representation of $S$. In this section, we prove that $C^*_{\mathrm{tight}}(\sg)\cong \og$. Recall from \cite[Definition 13.1]{exe08} that a representation $\pi:S\rightarrow B(H)$ on a Hilbert space $H$ is called \emph{tight} if for every $X,Y\subseteq \mathcal{E}(S)$ and every finite cover $Z$ for the set
$$\mathcal{E}(S)^{X,Y}:=\left\{e\in \mathcal{E}(S): e\leq f ~~~ \forall f\in X, ~ \mathrm{and} ~ ef'=0 ~~~ \forall f'\in Y\right\}$$
(see Definition \ref{defn4.1} below), one has
$$\bigvee_{e\in Z} \pi(e)=\bigwedge_{f\in X} \pi(f) \wedge \bigwedge_{f'\in Y}(1-\pi(f')).$$
Also, we say that $\pi$ is a \emph{universal tight representation} if for every tight representation $\phi:S\rightarrow \mathcal{A}$, there exists a $*$-homomorphism $\psi: \pi(S)\rightarrow \mathcal{A}$ such that $\psi \circ \pi=\phi$. Note that, in the proof of Theorem \ref{thm4.7}, we use the notion of \emph{cover-to-join} for a representation $\pi$ of $\sg$ which is equivalent to the tightness.

\begin{defn}[\cite{exe16}]\label{defn4.1}
Let $(G,\La)$ be a self-similar $k$-graph and $\sg$ the inverse semigroup of $(G,\La)$ as in Definition \ref{defn3.1}. Fix some $F\in \sg$.
\begin{enumerate}[(1)]
  \item The \emph{principal ideal} of $\es$ generated by $F$ is defined by
$$\mathcal{J}_F:=\{E\in \es: E\leq F\},$$
which is equal to $\{E\in \es: FE=E\}$. (Recall that, for any $E\in \es$, we have $E\leq F$ if and only if $FE=E$.)
  \item A subset $\mathcal{C}\subseteq \es$ is called an \emph{outer cover} for $\mathcal{J}_F$ (or simply for $F$) if for every $E\in \mathcal{J}_F$ there exists some $E'$ in $\mathcal{C}$ such that $EE'\neq 0$.
  \item A subset $\mathcal{C}\subseteq \es$ is called a \emph{cover} for $\mathcal{J}_F$ (or simply for $F$) if $\mathcal{C}$ is an outer cover and $\mathcal{C}\subseteq \mathcal{J}_F$.
\end{enumerate}
\end{defn}

While we work with principal ideals of $\es$, the next two lemmas will make our arguments easier. Before them, let us state a definition.

\begin{defn}\label{defn4.2}
Let $(G,\La)$ be a self-similar $k$-graph. We say that a path $\mu\in \La$ is \emph{strongly fixed by $g\in G$} if $g\cdot \mu=\mu$ and $\varphi(g,\mu)=e_G$. Following \cite{exe17}, if for every $g\in G\setminus\{e_G\}$ there are no paths in $\La$ strongly fixed by $g$, then $(G,\La)$ is called \emph{pseudo free}.
\end{defn}

\begin{lem}\label{lem4.3}
Given any singletons $\{(\mu,e_G,\mu)\}\in \es$ and $\{(\xi,g,\eta)\}\in \sg$, then $\{(\mu,e_G,\mu)\}\leq \{(\xi,g,\eta)\}$ in $\sg$ if and only if the following hold:

\begin{enumerate}[(1)]
  \item $\eta=\xi$,
  \item $\mu=\xi \beta$ for some $\beta\in s(\mu)\La$, and
  \item $\beta$ is strongly fixed by $g$.
\end{enumerate}
\end{lem}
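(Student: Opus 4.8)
The plan is to unwind the definition of the partial order on $\sg$ and then use the explicit multiplication formula \eqref{eq3.1} together with Lemma \ref{lem3.4}. By definition, $\{(\mu,e_G,\mu)\}\leq \{(\xi,g,\eta)\}$ means there is an idempotent $E\in\es$ with $\{(\mu,e_G,\mu)\}=\{(\xi,g,\eta)\}E$. Since $\{(\mu,e_G,\mu)\}\in\es$, it is its own source idempotent, so I may as well take $E=\{(\mu,e_G,\mu)\}$ and rephrase the condition as $\{(\xi,g,\eta)\}\{(\mu,e_G,\mu)\}=\{(\mu,e_G,\mu)\}$ (using that $s\le t\iff st=s$ for $s$ idempotent, as recorded in the excerpt). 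First I would compute the left-hand product from \eqref{eq3.1}: with $(\xi,g,\eta)$ on the left and $(\mu,e_G,\mu)$ on the right, the product is
\[
\bigcup_{(\alpha,\beta)\in\Lam(\eta,\mu)}\left\{\big(\xi(g\cdot\alpha),\ \varphi(g,\alpha)\varphi(e_G,\beta),\ \mu\beta\big)\right\}
=\bigcup_{(\alpha,\beta)\in\Lam(\eta,\mu)}\left\{\big(\xi(g\cdot\alpha),\ \varphi(g,\alpha),\ \mu\beta\big)\right\},
\]
using $\varphi(e_G,\beta)=e_G$ and $h=e_G$. For this set to equal the singleton $\{(\mu,e_G,\mu)\}$, I need $\Lam(\eta,\mu)$ to be a single pair $(\alpha,\beta)$, and then $\xi(g\cdot\alpha)=\mu=\mu\beta$ together with $\varphi(g,\alpha)=e_G$.

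Next I would extract the three stated conditions from these equalities. From $\mu=\mu\beta$ and the unique factorisation property we get $d(\beta)=0$, so $\beta=s(\mu)$, a vertex; then $(\alpha,s(\mu))\in\Lam(\eta,\mu)$ forces $\eta\alpha=\mu s(\mu)=\mu$ and $d(\eta\alpha)=d(\eta)\vee d(\mu)=d(\mu)$, whence $d(\eta)\le d(\mu)$, and more precisely $\eta=\mu(0,d(\eta))$ with $\alpha=\mu(d(\eta),d(\mu))$. Now the surviving equation $\xi(g\cdot\alpha)=\mu$ combined with $\eta\alpha=\mu$ and $d(\xi)=d(\eta)$ (these are "compatible" ranges here since $(\xi,g,\eta)\in\sg$ need not have $d(\xi)=d(\eta)$ a priori — I should be careful here; but $\xi(g\cdot\alpha)=\mu=\eta\alpha$ with $d(g\cdot\alpha)=d(\alpha)$ forces $d(\xi)=d(\eta)$ and then $\xi=\mu(0,d(\xi))=\eta$, giving condition (1)). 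Writing $\beta':=g\cdot\alpha$, we have $\mu=\xi\beta'$ with $\beta'\in s(\xi)\La$; I claim $\beta'$ equals the $\beta$ of the lemma statement. Indeed from $\xi=\eta$ and $\eta\alpha=\mu=\xi\beta'$ and left cancelation, $\alpha=\beta'=g\cdot\alpha$, so $\alpha$ is fixed by $g$; combined with $\varphi(g,\alpha)=e_G$ from the product computation, $\alpha$ is strongly fixed by $g$ in the sense of Definition \ref{defn4.2}. Setting $\beta:=\alpha$ then yields conditions (2) and (3), and $s(\mu)=s(\beta)$ so $\beta\in s(\mu)\La$ as required (noting $s(\xi)=g\cdot s(\eta)=g\cdot s(\mu)$ and $g\cdot\alpha=\alpha$ forces $s(\mu)=g\cdot s(\mu)$, consistent with $\beta\in s(\mu)\La$).

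For the converse, given (1)--(3) I would simply run the computation backwards: with $\eta=\xi$ and $\mu=\xi\beta$, the pair $(\beta,s(\mu))$ lies in $\Lam(\eta,\mu)=\Lam(\xi,\xi\beta)=\{(\beta,s(\mu))\}$ (this last equality because any minimal common extension of $\xi$ and $\xi\beta$ has degree $d(\xi)\vee d(\xi\beta)=d(\xi\beta)$, so it is $\xi\beta=\mu$ itself), and the product $\{(\xi,g,\eta)\}\{(\mu,e_G,\mu)\}$ becomes the single triple $\big(\xi(g\cdot\beta),\varphi(g,\beta),\mu s(\mu)\big)=(\xi\beta,e_G,\mu)=(\mu,e_G,\mu)$, using $g\cdot\beta=\beta$ and $\varphi(g,\beta)=e_G$ from (3). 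Hence $\{(\xi,g,\eta)\}\{(\mu,e_G,\mu)\}=\{(\mu,e_G,\mu)\}$, i.e. $\{(\mu,e_G,\mu)\}\le\{(\xi,g,\eta)\}$.

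The main obstacle I anticipate is the bookkeeping around degrees and the identification of $\beta$: one has to be careful that $(\xi,g,\eta)\in\sg$ only a priori requires $s(\xi)=g\cdot s(\eta)$, not $d(\xi)=d(\eta)$, so the equality $\xi=\eta$ in (1) must genuinely be \emph{derived} from $\xi(g\cdot\alpha)=\eta\alpha$ via unique factorisation rather than assumed, and the singleton-ness of $\Lam(\eta,\mu)$ needs to be pinned down from the fact that the product is forced to be a singleton idempotent. Everything else is a direct application of \eqref{eq3.1}, Lemma \ref{lem3.4}, the unique factorisation property, and the cocycle identities of Definition \ref{defn2.3}.
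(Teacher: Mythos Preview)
Your argument is correct and follows essentially the same approach as the paper: a direct unwinding of the order relation using the multiplication formula \eqref{eq3.1} and unique factorisation. The only difference is that you compute $\{(\xi,g,\eta)\}\{(\mu,e_G,\mu)\}$ (using $s\le t\iff ts=s$ for idempotent $s$) whereas the paper computes $\{(\mu,e_G,\mu)\}\{(\xi,g,\eta)\}$ (using $s\le t\iff st=s$); both are valid characterisations and lead to mirror-image computations with the same conclusion.

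One small remark: your closing parenthetical ``$s(\xi)=g\cdot s(\eta)=g\cdot s(\mu)$'' is not right, since $s(\eta)=r(\alpha)$ while $s(\mu)=s(\alpha)$. In fact $r(\beta)=s(\xi)$, so $\beta\in s(\xi)\La$; the ``$\beta\in s(\mu)\La$'' in the lemma statement appears to be a typo in the paper (it should read $s(\xi)\La$), and your attempt to justify it literally is what led to the confusion. This does not affect the substance of your proof.
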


\begin{proof}
The assumption $\{(\mu,e_G,\mu)\}\leq \{(\xi,g,\eta)\}$ implies
\begin{align*}
\{(\mu,e_G,\mu)\}&=\{(\mu,e_G,\mu)\}\{(\xi,g,\eta)\} \\
&=\bigcup_{(\alpha,\beta)\in \Lam(\mu,\xi)} \{(\mu\alpha, \varphi(g,g^{-1}\cdot\beta),\eta(g^{-1}\cdot\beta))\},
\end{align*}
so $\alpha=s(\mu)$, $\eta(g^{-1}\cdot\beta)=\mu$ and $\varphi(g,g^{-1}\cdot\beta)=e_G$ for all $(\alpha,\beta)\in \Lam(\mu,\xi)$. Consequently, we obtain $\xi\beta=\mu=\eta(g^{-1}\cdot\beta)$, and hence $\beta=g^{-1}\cdot\beta$ and $\eta=\xi$ by the unique factorization property. These prove the ``only if" part. The reverse implication follows from the multiplication in (\ref{eq3.1}).
\end{proof}

\begin{lem}\label{lem4.2}
Let $F\in \sg$ and $E\in \es$. Then $E\leq F$ if and only if for any $(\mu,e_G,\mu)\in E$, there exists a (unique) triple of the form $(\xi,g,\xi)\in F$ such that $\{(\mu,e_G,\mu)\}\leq \{(\xi,g,\xi)\}$.
\end{lem}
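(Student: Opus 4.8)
The plan is to exploit the characterisation $E\le F\iff EF=E$ (valid since $E\in\es$ is idempotent) together with the factorisation $EF=\bigcup_{(\mu,e_G,\mu)\in E}\{(\mu,e_G,\mu)\}F$, which is immediate from the defining formula (\ref{eq3.1}). The two ingredients that do the real work in both directions are the unique factorisation property of $\La$ and the orthogonality condition $(\perp)$ (see Remark \ref{rem3.2}).

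For the ``only if'' direction, assume $EF=E$ and fix $(\mu,e_G,\mu)\in E$. Since $(\mu,e_G,\mu)\in EF$, unwinding (\ref{eq3.1}) produces a triple $(\mu',e_G,\mu')\in E$, a triple $(\xi,g,\eta)\in F$, and a pair $(\alpha,\beta)\in\Lam(\mu',\xi)$ with $\mu'\alpha=\mu$, $\varphi(g,g^{-1}\cdot\beta)=e_G$ and $\eta(g^{-1}\cdot\beta)=\mu$. From $\mu'\alpha=\mu$ one gets $\Lam(\mu',\mu)\neq\emptyset$, so orthogonality of $E$ forces $\mu'=\mu$, whence $\alpha=s(\mu)$; then $(s(\mu),\beta)\in\Lam(\mu,\xi)$ gives $\mu=\xi\beta$ with $d(\xi)\le d(\mu)$. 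Comparing degrees in $\eta(g^{-1}\cdot\beta)=\xi\beta$ shows $d(\eta)=d(\xi)$, and the unique factorisation property then yields $\eta=\xi$ and $g^{-1}\cdot\beta=\beta$; the latter, together with $\varphi(g,g^{-1}\cdot\beta)=e_G$, says exactly that $\beta$ is strongly fixed by $g$. Hence $(\xi,g,\xi)\in F$, and Lemma \ref{lem4.3} gives $\{(\mu,e_G,\mu)\}\le\{(\xi,g,\xi)\}$. For uniqueness, if $(\xi',g',\xi')\in F$ also satisfies $\{(\mu,e_G,\mu)\}\le\{(\xi',g',\xi')\}$, then by Lemma \ref{lem4.3} both $\xi$ and $\xi'$ are initial segments of $\mu$, so $\mu(0,d(\xi)\vee d(\xi'))$ is a common extension of $\xi$ and $\xi'$ of degree $d(\xi)\vee d(\xi')$; thus $\Lam(\xi,\xi')\neq\emptyset$, and Remark \ref{rem3.2} forces $(\xi,g,\xi)=(\xi',g',\xi')$.

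For the ``if'' direction, the key preliminary observation is that every singleton $\{(\xi,g,\eta)\}$ with $(\xi,g,\eta)\in F$ lies below $F$: a direct computation with (\ref{eq3.1}), in which all triples of $F$ other than $(\xi,g,\eta)$ are annihilated by $(\perp)$, gives $\{(\xi,e_G,\xi)\}F=\{(\xi,g,\eta)\}$, and since $\{(\xi,e_G,\xi)\}$ is idempotent this means precisely $\{(\xi,g,\eta)\}\le F$. Now assume that for each $(\mu,e_G,\mu)\in E$ there is $(\xi,g,\xi)\in F$ with $\{(\mu,e_G,\mu)\}\le\{(\xi,g,\xi)\}$. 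Combining this with the preliminary observation and transitivity of the partial order gives $\{(\mu,e_G,\mu)\}\le F$, i.e.\ $\{(\mu,e_G,\mu)\}F=\{(\mu,e_G,\mu)\}$. Taking the union over $(\mu,e_G,\mu)\in E$ and using the factorisation of $EF$ recorded in the first paragraph yields $EF=E$, that is $E\le F$.

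I expect the main obstacle to be the bookkeeping in the ``only if'' direction: from the single relation $(\mu,e_G,\mu)\in EF$ one must extract, via (\ref{eq3.1}), the successive equalities $\mu'=\mu$, $\alpha=s(\mu)$, $\eta=\xi$, $g^{-1}\cdot\beta=\beta$ and $\varphi(g,\beta)=e_G$, invoking the unique factorisation property of $\La$ or the orthogonality of $E$ and $F$ at precisely the right point each time. The ``if'' direction is then routine once the auxiliary fact $\{(\xi,g,\eta)\}\le F$ (for $(\xi,g,\eta)\in F$) has been established.
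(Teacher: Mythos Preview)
Your proof is correct and follows essentially the same approach as the paper: both arguments reduce $E\le F$ to the characterisation $EF=E$ (the paper writes $FE=E$, which is equivalent since $E$ is idempotent), decompose into singletons, and invoke orthogonality together with Lemma~\ref{lem4.3}. Your ``only if'' direction is in fact a careful unpacking of steps the paper leaves implicit, and your ``if'' direction, routed through the auxiliary fact $\{(\xi,g,\eta)\}\le F$ and transitivity, is a clean reorganisation of the paper's direct computation of $FE$.
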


\begin{proof}
If $E\leq F$ then $FE=E$. In particular, by eq. (\ref{eq3.1}), for any triple $(\mu,e_G,\mu)$ in $E$, there exists $(\xi,g,\eta)\in F$ such  that
$$\{(\mu,e_G,\mu)\}\{(\xi,g,\eta)\}=\{(\mu,e_G,\mu)\}.$$
But we must have $\eta=\xi$ by Lemma \ref{lem4.3}, and hence $\{(\mu,e_G,\mu)\}\leq \{(\xi,g,\xi)\}$.

For the converse, it suffices to write
\begin{align*}
FE&=\bigcup_{(\mu,e_G,\mu)\in E} F \{(\mu,e_G,\mu)\}\\
&=\bigcup_{\substack{(\mu,e_G,\mu)\in E \\ (\xi,g,\xi)\in F}} \{(\xi,g,\xi)\} \{(\mu,e_G,\mu)\}\\
&=\bigcup_{(\mu,e_G,\mu)\in E} \{(\mu,e_G,\mu)\}\\
&=E,
\end{align*}
as desired.
\end{proof}

{\bf Notation.} In order to ease the notation, we will denote each singleton of the form $\{(\mu,e_G,\mu)\}$ in $\es$ by $\iota_\mu$.

\begin{rem}\label{rem4.3}
Note that if $F$ is an idempotent in $\es$, then $\mathcal{J}_F=\{FE: E\in \es\}$. Moreover, if $F$ is of the form $F=\iota_{\mu_1}\cup \cdots \cup \iota_{\mu_l}$, then $\iota_{\mu_i}$'s are pairwise orthogonal and we have
$$\mathcal{J}_F=\left\{E=E_1\cup \cdots \cup E_l: E_i\in \mathcal{J}_{\iota_{\mu_i}} ~ \mathrm{for} ~ 1\leq i\leq l \right\}.$$
Hence, every cover (outer cover) $\mathcal{C}$ for $\mathcal{J}_F$ can be written as $\mathcal{C}=\mathcal{C}_1 \cup \cdots \cup \mathcal{C}_l$, where each $\mathcal{C}_i$ is a cover (an outer cover, respectively) for $\mathcal{J}_{\iota_{\mu_i}}$. We will use these facts in the proof of Theorem \ref{thm4.7} below.
\end{rem}

We need also the next lemma to prove Theorem \ref{thm4.7}.

\begin{lem}\label{lem4.6}
Let $(G,\La)$ be a self-similar $k$-graph. For every $\iota_\mu=\{(\mu,e_G,\mu)\}\in \es$, where $\mu\in \La$, the following statements hold:
\begin{enumerate}[(1)]
  \item
  \begin{multline*}
  \mathcal{J}_{\iota_\mu}=\big\{E\subseteq_{\mathrm{finite}}  \{(\mu\tau, e_G, \mu\tau): \tau\in s(\mu)\La\}: \Lam(\tau,\tau')= \emptyset ~ \\
  \mathrm{for ~ distinct}  ~ (\mu\tau, e_G, \mu\tau),(\mu\tau', e_G, \mu\tau') \in E \big\}.
  \end{multline*}
  \item $\mathcal{C}$ is a cover for $\iota_\mu$ if and only if the set $\left\{\iota_\la: (\la,e_G,\la)\in \bigcup_{E\in \mathcal{C}}E\right\}$ is, and if and only if $\mathcal{C}'=\{\iota_\tau: (\mu\tau,e_G,\mu\tau)\in \bigcup_{E\in \mathcal{C}} E\}$ is a cover for $\iota_{s(\mu)}$.
  \item $\mathcal{C}$ is a cover for $\iota_\mu$ if and only if the set
  $$\Omega_{\mathcal{C}}:=\left\{\tau:(\mu\tau,e_G,\mu\tau)\in \bigcup_{E\in \mathcal{C}} E\right\}$$
   is exhaustive in $s(\mu)\La$.
\end{enumerate}
\end{lem}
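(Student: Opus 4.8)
The plan is to prove the three parts of Lemma~\ref{lem4.6} in order, since each part builds on the previous one. The main engine throughout is the explicit description of the order relation on $\es$ given by Lemmas~\ref{lem3.3}, \ref{lem4.3}, \ref{lem4.2}, together with the multiplication formula~(\ref{eq3.1}) and the structure of idempotents from Corollary~\ref{cor3.5}.

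For part~(1), I would first observe that $\mathcal{J}_{\iota_\mu} = \{E \in \es : E \leq \iota_\mu\}$ and apply Lemma~\ref{lem4.2}: $E \leq \iota_\mu$ iff every triple $(\la, e_G, \la) \in E$ satisfies $\iota_\la \leq \iota_\mu$. Now I invoke Lemma~\ref{lem4.3} with $(\xi,g,\eta) = (\mu, e_G, \mu)$: since $g = e_G$, condition~(3) (``$\beta$ strongly fixed by $e_G$'') is automatic, condition~(1) forces nothing extra, and condition~(2) says exactly that $\la = \mu\tau$ for some $\tau \in s(\mu)\La$. Combined with the orthogonality constraint from Corollary~\ref{cor3.5} — namely $\Lam(\la, \la') = \emptyset$ for distinct elements, which translates (using the unique factorisation property, so that $\Lam(\mu\tau, \mu\tau') = \emptyset \iff \Lam(\tau,\tau') = \emptyset$, since a common extension of $\mu\tau$ and $\mu\tau'$ must begin with $\mu$) — this yields precisely the claimed description. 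The slightly delicate point here is the reduction $\Lam(\mu\tau,\mu\tau') = \emptyset \iff \Lam(\tau,\tau') = \emptyset$, which needs the factorisation property but is routine.

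For part~(2), the key is that membership in a cover only depends on the \emph{set of vertices/triples} appearing in $\bigcup_{E \in \mathcal{C}} E$, not on how they are grouped into the sets $E$. Using Definition~\ref{defn4.1}(3), $\mathcal{C}$ is a cover for $\iota_\mu$ iff $\mathcal{C} \subseteq \mathcal{J}_{\iota_\mu}$ and for every $E \in \mathcal{J}_{\iota_\mu}$ there is $E' \in \mathcal{C}$ with $EE' \neq 0$. Since $E, E'$ are idempotents built from singletons, $EE' \neq 0$ iff some $\iota_\la$ in $E$ and some $\iota_{\la'}$ in $E'$ satisfy $\iota_\la \iota_{\la'} \neq 0$, i.e. $\Lam(\la,\la') \neq \emptyset$ by~(\ref{eq3.1}). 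So the cover condition is entirely about the singletons $\iota_\la$ with $(\la, e_G, \la) \in \bigcup_{E \in \mathcal{C}} E$; this gives the first equivalence. The second equivalence, reducing $\iota_\mu$ to $\iota_{s(\mu)}$, follows by the bijection $\tau \mapsto \mu\tau$ between $s(\mu)\La$ and $\{\la : \la = \mu\tau\}$, again using $\Lam(\mu\tau, \mu\tau') = \emptyset \iff \Lam(\tau,\tau') = \emptyset$ and the analogous statement against arbitrary test elements $\mu\sigma \in \mathcal{J}_{\iota_\mu}$.

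For part~(3), I would apply part~(2) to reduce to: $\mathcal{C}' = \{\iota_\tau : \tau \in \Omega_{\mathcal{C}}\}$ is a cover for $\iota_{s(\mu)}$ iff $\Omega_{\mathcal{C}}$ is exhaustive in $s(\mu)\La$. Unwinding the cover condition for $\iota_{s(\mu)}$: by part~(1), $\mathcal{J}_{\iota_{s(\mu)}}$ consists of (finite orthogonal) subsets of $\{\iota_\sigma : \sigma \in s(\mu)\La\}$, and the outer-cover condition says that for every $\sigma \in s(\mu)\La$ there is $\tau \in \Omega_{\mathcal{C}}$ with $\iota_\sigma \iota_\tau \neq 0$, i.e. $\Lam(\sigma,\tau) \neq \emptyset$ — which is exactly the definition of $\Omega_{\mathcal{C}}$ being exhaustive (noting that $\Omega_{\mathcal{C}} \subseteq s(\mu)\La \setminus \{s(\mu)\}$ automatically, since $\iota_{s(\mu)}$ itself would trivialise things; one should check $s(\mu) \notin \Omega_{\mathcal{C}}$ or handle it separately). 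The condition $\mathcal{C}' \subseteq \mathcal{J}_{\iota_{s(\mu)}}$ is automatic from the definition of $\Omega_{\mathcal{C}}$. I expect the main obstacle to be bookkeeping rather than conceptual: carefully matching the semigroup-theoretic ``$EE' \neq 0$'' with the graph-theoretic ``$\Lam(\la,\la') \neq \emptyset$'' across the grouping of triples into idempotent sets, and being careful about the vertex $s(\mu)$ as a degenerate common extension. Everything else is a direct translation through Lemmas~\ref{lem4.2} and~\ref{lem4.3} and Corollary~\ref{cor3.5}.
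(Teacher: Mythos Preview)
Your proposal is correct and follows essentially the same route as the paper: part~(1) via Lemmas~\ref{lem4.2} and~\ref{lem4.3} together with Corollary~\ref{cor3.5} and the identity $\Lam(\mu\tau,\mu\tau')=\Lam(\tau,\tau')$; part~(2) by reducing the condition $EE'\neq 0$ to intersections of singletons and using the bijection $\tau\mapsto\mu\tau$; and part~(3) by passing through part~(2) and unwinding the cover condition for $\iota_{s(\mu)}$ as $\Lam(\sigma,\tau)\neq\emptyset$. The degenerate case $s(\mu)\in\Omega_{\mathcal{C}}$ that you flag is not addressed in the paper either and is harmless for the applications.
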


\begin{proof}
(1). For two elements $\iota_\la, \iota_\mu\in \es$, Lemma \ref{lem4.3} implies that $\iota_\la\leq \iota_\mu$ if and only if $\la=\mu\tau$ for some $\tau\in s(\mu)\La$. So, every $E\in \es$ with $E\leq \iota_\mu$ must be a finite subset of $\{(\mu\tau, e_G, \mu\tau): \tau\in s(\mu)\La\}$ satisfying $\Lam(\mu\tau,\mu\tau')=\emptyset$ for distinct $(\mu\tau, e_G, \mu\tau),(\mu\tau', e_G, \mu\tau') \in E$. But, since $\Lam(\mu\tau,\mu\tau')=\Lam(\tau,\tau')$, the right-hand side of (1) is included in $\mathcal{J}_{\iota_\mu}$. The proof of the reverse inclusion is analogous.

(2). Using part (1), we have $E\in \mathcal{J}_{\iota_{\mu}}$ if and only if the $E'=\{(\tau,e_G,\tau): (\mu\tau,e_G,\mu\tau)\in E\}$ belongs to $\mathcal{J}_{\iota_{s(\mu)}}$. Hence statement (2) follows from the fact that for every $E\in \mathcal{C}$ and $F\in \mathcal{J}_{\iota_\mu}$,
\begin{align*}
EF\neq 0  \hspace{5mm} \Longleftrightarrow \hspace{5mm} \exists ~ (\mu\tau,e_G,\mu\tau) & \in E ~~\mathrm{such ~ that~} \iota_{\mu\tau}F\neq 0 ~~\mathrm{in}~~\sg \\
\Longleftrightarrow \hspace{5mm} \exists ~ (\mu\tau,e_G,\mu\tau) & \in E ~~\mathrm{such ~ that~} \iota_{\tau}F'\neq 0,  ~~\mathrm{where} \\
&~~ F':=\{(\tau',e_G,\tau'):(\mu\tau',e_G,\mu\tau') \in F\}.
\end{align*}

(3). By part (2), $\mathcal{C}$ is a cover for $\iota_\mu$ if and only if
$$\mathcal{C}'=\left\{\iota_\tau:(\mu\tau,e_G,\mu\tau)\in  \cup_{E\in \mathcal{C}}E\right\}$$
is a cover for $\iota_{s(\mu)}$. Therefore, combining this with
\begin{align*}
  \la\in s(\mu)\La \hspace{5mm} &\Longleftrightarrow \hspace{5mm} \iota_\la\in \mathcal{J}_{\iota_{s(\mu)}}\\
   &\Longleftrightarrow \hspace{5mm} \exists \iota_{\tau}\in \mathcal{C}' ~~\mathrm{such ~ that~} \iota_{\la}\iota_\tau\neq 0 \\
   &\Longleftrightarrow \hspace{5mm} \exists \tau\in \Omega_{\mathcal{C}} ~~\mathrm{such ~ that~} \Lam(\la,\tau)\neq 0,
\end{align*}
concludes statement (3).
\end{proof}

\begin{thm}\label{thm4.7}
Let $(G,\La)$ be a self-similar $k$-graph as in Definition \ref{defn2.3}. Then the map $\pi:\sg \rightarrow \og$ defined by $\pi(0)=0$ and
\begin{equation}\label{eq4.1}
\pi(F)=\sum_{(\mu,g,\nu)\in F}s_\mu u_{s(\mu),g} s_\nu^* \hspace{10mm} (F\in \sg)
\end{equation}
is a universal tight representation (in the sense of \cite[Definition 13.1]{exe08}). Therefore, we have $C^*(\gt)\cong C^*_{\mathrm{tight}}(\sg) \cong \og$.
\end{thm}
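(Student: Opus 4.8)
The plan is to verify the three defining properties of a universal tight representation for the map $\pi$ in \eqref{eq4.1}: first that $\pi$ is a well-defined semigroup $*$-homomorphism, then that it is tight (via the cover-to-join condition), and finally that it is universal. For the first step I would fix $F\in\sg$ and check that $\pi(F)$ is well defined: the summands $s_\mu u_{s(\mu),g}s_\nu^*$ with $(\mu,g,\nu)\in F$ have mutually orthogonal ranges and sources because of property $(\perp)$ — indeed, if $\Lam(\mu,\xi)=\emptyset$ then $s_\mu^* s_\xi=0$ by (CK3), so distinct summands are orthogonal partial isometries and $\pi(F)$ is itself a partial isometry. That $\pi(EF)=\pi(E)\pi(F)$ reduces, after expanding both sides over the triples, to the single-triple identity
\[
(s_\mu u_{s(\mu),g}s_\nu^*)(s_\xi u_{s(\xi),h}s_\eta^*)=\sum_{(\alpha,\beta)\in\Lam(\nu,\xi)} s_{\mu(g\cdot\alpha)}\,u_{s(\mu(g\cdot\alpha)),\varphi(g,\alpha)\varphi(h,h^{-1}\cdot\beta)}\,s_{\eta(h^{-1}\cdot\beta)}^*,
\]
which one obtains by inserting $s_\nu^* s_\xi=\sum_{(\alpha,\beta)\in\Lam(\nu,\xi)}s_\alpha s_\beta^*$ from (CK3) and then repeatedly commuting the $u$'s past the $s$'s using relation (4) of Definition \ref{defn2.4} together with the cocycle identities. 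This is essentially the computation already carried out in Lemma \ref{lem3.3} at the level of $\sg$, transported into $\og$ via the $(G,\La)$-relations; I would reference that parallel rather than redo it. The identity $\pi(F^*)=\pi(F)^*$ is immediate from relation (3) of Definition \ref{defn2.4}.

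**Tightness.** For the tight condition I would use the cover-to-join reformulation: it suffices to show that whenever $\mathcal{C}$ is a cover for an idempotent $\iota_\mu\in\es$, one has $\bigvee_{E\in\mathcal{C}}\pi(E)=\pi(\iota_\mu)$, i.e. $\pi(\iota_\mu)=s_\mu s_\mu^*$ is the supremum of the projections $\pi(E)$, $E\in\mathcal{C}$; by Remark \ref{rem4.3} a general idempotent $F=\iota_{\mu_1}\cup\cdots\cup\iota_{\mu_l}$ and its covers split as disjoint unions, so the single-$\iota_\mu$ case implies the general one. Here Lemma \ref{lem4.6}(3) is the crucial bridge: $\mathcal{C}$ is a cover for $\iota_\mu$ exactly when $\Omega_\mathcal{C}=\{\tau:(\mu\tau,e_G,\mu\tau)\in\bigcup_{E\in\mathcal{C}}E\}$ is exhaustive in $s(\mu)\La$. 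Then $\bigvee_{E\in\mathcal{C}}\pi(E)$ is the supremum of the $s_{\mu\tau}s_{\mu\tau}^*=s_\mu(s_\tau s_\tau^*)s_\mu^*$ over $\tau\in\Omega_\mathcal{C}$, and I must show this equals $s_\mu s_\mu^*$. Conjugating by $s_\mu$, this is the statement that $\bigvee_{\tau\in\Omega_\mathcal{C}}s_\tau s_\tau^*=s_{s(\mu)}$ in the corner $s_{s(\mu)}\og s_{s(\mu)}$ whenever $\Omega_\mathcal{C}$ is exhaustive at $s(\mu)$ — and finite exhaustive sets are precisely what the Cuntz–Krieger relation (CK4) controls. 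One has to be a little careful: $\Omega_\mathcal{C}$ need not lie in $\mathbf{FE}(\La)$ as written (it may contain $s(\mu)$ itself, or fail to be minimal), but passing to a suitable finite exhaustive subset, using that $s_\tau s_\tau^*\le s_{\mu'}s_{\mu'}^*$ whenever $\mu'$ is an initial segment of $\tau$, and invoking (CK4) in the form $\prod_{\tau\in X}(s_v-s_\tau s_\tau^*)=0$, one gets $\bigvee s_\tau s_\tau^*=s_v$. Conversely, if $\mathcal{C}$ is not a cover, $\Omega_\mathcal{C}$ is not exhaustive, so some $\lambda\in s(\mu)\La$ has $\Lam(\lambda,\tau)=\emptyset$ for all $\tau\in\Omega_\mathcal{C}$, and then $s_\lambda s_\lambda^*$ is a nonzero projection orthogonal to every $\pi(E)$, $E\in\mathcal{C}$, strictly below $\pi(\iota_\mu)$ — so the supremum is not attained, confirming the equivalence is sharp.

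**Universality and the isomorphism.** For universality I would invoke the universal property of $\og$: given a tight representation $\phi:\sg\to\mathcal{A}$, set $s_\mu:=\phi(\iota_\mu\cdot\text{shift})$ — more precisely define $t_\mu:=\phi(\{(\mu,e_G,s(\mu))\})$ and $u_{v,g}:=\phi(\{(v,g,g^{-1}\cdot v)\})$, and check that $\{t_\mu,u_{v,g}\}$ is a $(G,\La)$-family in $\mathcal{A}$. Relations (2),(3),(5) of Definition \ref{defn2.4} and (CK1),(CK2),(CK3) follow directly from $\phi$ being a semigroup $*$-homomorphism together with the multiplication \eqref{eq3.1}; relation (4) likewise. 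The only relation requiring tightness of $\phi$ is (CK4): $\prod_{\mu\in X}(t_v-t_\mu t_\mu^*)=0$ for $X\in v\mathbf{FE}(\La)$, and this is exactly the cover-to-join condition applied to the cover $\{\iota_\mu:\mu\in X\}$ of $\iota_v$ (which is a cover precisely because $X$ is finite exhaustive, by Lemma \ref{lem4.6}(3) with $\mu=v$). By universality of $\og$ there is then a $*$-homomorphism $\og\to\mathcal{A}$ carrying $s_\mu\mapsto t_\mu$, $u_{v,g}\mapsto u_{v,g}$, hence carrying $\pi(F)\mapsto\phi(F)$ for all $F$; this is the required factorization, and since $\pi(\sg)$ generates $\og$ (Proposition \ref{prop2.6}), it shows $\pi$ is a universal tight representation. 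Then $C^*_{\mathrm{tight}}(\sg)\cong\og$ by the defining property of the tight $C^*$-algebra, and $C^*(\gt)\cong C^*_{\mathrm{tight}}(\sg)$ is Exel's theorem \cite{exe08} that the tight $C^*$-algebra of an inverse semigroup is the groupoid $C^*$-algebra of its tight groupoid of germs.

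**Main obstacle.** I expect the heart of the matter to be the tightness verification, and within it the reconciliation of the combinatorial notion of ``cover for $\iota_\mu$'' in $\es$ with the Cuntz–Krieger relation (CK4): one must show that an outer cover inside $\mathcal{J}_{\iota_\mu}$ corresponds, after stripping off the common prefix $\mu$ and thinning to a genuine finite exhaustive set, to an element of $\mathbf{FE}(\La)$ — handling the degenerate cases (the cover containing $\iota_\mu$ itself, redundant or non-minimal elements) carefully — and conversely that (CK4) is strong enough to force the supremum. Lemma \ref{lem4.6} does most of this bookkeeping, so in practice the proof will consist of citing that lemma and then a short (CK4) argument; the well-definedness and multiplicativity of $\pi$, while computational, are structurally routine given Lemma \ref{lem3.3}.
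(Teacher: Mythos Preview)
Your proposal is correct and follows essentially the same route as the paper's proof: reduce tightness via Remark \ref{rem4.3} to singletons $\iota_\mu$, translate covers into exhaustive sets using Lemma \ref{lem4.6}, invoke (CK4), and for universality define $t_\mu=\phi(\{(\mu,e_G,s(\mu))\})$, $u_{v,g}=\phi(\{(v,g,g^{-1}\cdot v)\})$ and check the $(G,\La)$-relations, with (CK4) coming from the cover-to-join property of $\phi$. Your caution about $\Omega_{\mathcal{C}}$ possibly containing $s(\mu)$ (hence not literally lying in $v\mathbf{FE}(\La)$) is a valid minor point that the paper glosses over, but as you note it is trivially handled since then $\iota_\mu$ itself appears in the cover.
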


\begin{proof}
It is straightforward to show that $\pi$ is a $*$-homomorphism. So, we first prove the tightness of $\pi$. According to \cite[Theorem 6.1]{exe21} or \cite[Corollary 2.3]{don14}, it suffices to verify that $\pi$ is a \emph{cover-to-joint representation} in the sense: if $\mathcal{C}$ is a finite cover for $F\in \es$, then
\begin{equation}\label{eq4.2}
\pi(F)=\bigvee_{E\in \mathcal{C}} \pi(E).
\end{equation}
Let $F$ be an element in $\es$. In light of Remark \ref{rem4.3}, for every $E\in \mathcal{J}_F$, there are $\iota_{\mu_1},\ldots, \iota_{\mu_l}\subseteq F$ and $E_i\in \mathcal{J}_{\iota_{\mu_i}}\subseteq \mathcal{J}_F$ for $1\leq i\leq l$, such that $E=E_1 \sqcup \cdots \sqcup E_n$. Hence, without loss of generality, we may prove (\ref{eq4.2}) only for singletons of the form $F=\iota_\mu$ with $\mu\in \La$.

So, fix some $F=\iota_\mu$ in $\es$ and suppose that $\mathcal{C}$ is a finite cover for $\iota_\mu$. Let us first consider the case $\mu=v\in \La^0$. Then every $E\leq \iota_v$ is a subset of $\bigcup_{\tau \in v\La}\iota_\tau$ by Lemma \ref{lem4.6}(1). If we define $\Omega:=\{\tau:(\tau,e_G,\tau) \in \bigcup_{E\in \mathcal{C}}E\}$, then Lemma \ref{lem4.6}(3) says that $\Omega$ is exhaustive, and by (CK4) of Definition \ref{defn2.2} we get
$$\prod_{\tau\in \Omega}(s_v-s_\tau s_\tau^*)=0$$
$$\Longrightarrow \hspace{5mm}  s_v=\bigvee_{\tau\in \Omega} s_\tau s_\tau^* =\bigvee_{\tau\in \Omega} \pi(\iota_\tau)=\bigvee_{E\in \mathcal{C}} \pi(E),$$
proving (\ref{eq4.2}) in this case.

For the general case, assume that $\mu\in \La$ is given and $\mathcal{C}$ is a finite cover for $\iota_\mu$. Then Lemma \ref{lem4.6}(2) implies that $\mathcal{C}'=\{\iota_\tau:(\mu\tau,e_G,\mu\tau)\in \bigcup_{E\in \mathcal{C}} E\}$ is a finite cover for $\iota_{s(\mu)}$, and by the previous case, we get
\begin{align*}
\pi(\iota_\mu)=s_\mu s_{s(\mu)}s_\mu^* & = s_\mu(\bigvee_{\tau\in \mathcal{C}'}s_\tau s_\tau^*) s_\mu^*\\
&=\bigvee_{\iota_{\mu\tau} \subseteq \cup_{E\in \mathcal{C}} E} s_{\mu\tau} s_{\mu \tau}^*= \bigvee_{E\in \mathcal{C}}\pi(E)
\end{align*}
concluding that $\pi$ is a cover-to-joint (so tight) representation.

We now prove that $\pi$ is a \emph{universal} tight representation. To do this, assume $\phi:\sg \rightarrow \mathcal{A}$ is a tight representation for $\sg$. Define $S_\mu:=\phi(\{(\mu, e_G,s(\mu))\})$ and $U_{v,g}:=\phi(\{(v,g,g^{-1}\cdot v)\})$ for all $\mu\in \La$, $v\in \La^0$, and $g\in G$. We claim that $\{S_\mu,U_{v,g}\}$ is a $(G,\La)$-family in $\mathcal{A}$. Indeed, all relations in Definition \ref{defn2.4} follow directly from the multiplication and inverse in $\sg$ and that $\phi$ is a $*$-homomorphism; but only the (CK4) property for $\{S_\mu:\mu\in \La\}$ is not trivial. To verify the (CK4), let $\Omega\subseteq v\La$ be a finite exhaustive set. Then $\mathcal{C}=\{\iota_\tau:\tau\in \Omega\}$ is a finite cover for $\iota_v$ by Lemma \ref{lem4.6}(3), and the tight property of $\phi$ yields
$$S_v=\phi(\iota_v)=\bigvee_{\iota_\tau\in \mathcal{C}}\phi (\iota_\tau)=\bigvee_{\tau\in \Omega}S_\tau S_\tau^*,$$
or equivalently
$$\prod_{\tau\in \Omega}(S_v-S_\tau S_\tau^*)=0.$$
This proves (CK4). Therefore, the universality of $\og$ gives a $*$-homomorphism $\psi :\og\rightarrow \mathcal{A}$ such that $\psi(s_\mu)=S_\mu$ and $\psi(u_{v,g})=U_{v,g}$ for all $\mu\in \La$, $v\in \La^0$ and $g\in G$. Consequently, $\psi\circ \pi=\phi$ and $\pi$ is a universal tight representation of $\sg$.

For the second statement, it suffices to notice that the linear span of $\mathrm{rang}(\pi)$ is dense in $\og$ by Proposition \ref{prop2.6}, and hence $\pi$ induces a $*$-isomorphism from $C^*_{\mathrm{tight}}(\sg)$ onto $\og$ by \cite[Theorem 13.3]{exe08}. Since $C^*(\gt)\cong C^*_{\mathrm{tight}}(\sg)$ by \cite[Corollary 10.16]{exe08}, the proof is completed.
\end{proof}

\begin{rem}\label{rem4.8}
Let $(G,\La)$ be a self-similar $k$-graph over a row-finite, source-free $k$-graph $\La$. According to \cite[Definition 3.9]{li21-ideal}, let $C^*_u(G,\La)$ be the universal unital $C^*$-algebra generated by a family
$$\{s_\mu:\mu\in \La\} \cup \{u_g:g\in G\}$$
satisfying the following relations:
\begin{enumerate}[(1)]
  \item $\{s_\mu:\mu\in \La\}$ is a $\La$-family,
  \item $u:G\rightarrow C^*_u(G,\La)$, defined by $g\mapsto u_g$, is a unitary $*$-representation of $G$ on $C^*_u(G,\La)$,
  \item $u_{gh}=u_g u_h$ for all $g,h\in G$, and
  \item $u_g s_\mu = s_{g\cdot \mu} u_{\varphi(g,\mu)}$ for all $g\in G$ and $\mu\in \La$.
\end{enumerate}
Then in \cite{li21-ideal} the $C^*$-algebra associated to $(G,\La)$ is defined by
$$\mathcal{A}_{G,\La}:=\overline{\mathrm{span}}\{s_\mu u_g s_\nu^*: g\in G,~ \mu,\nu\in \La,~ s(\mu)=g\cdot s(\nu)\}$$
as a $C^*$-subalgebra of $C^*_u(G,\La)$. However, one may replace $\og$ by $\mathcal{A}_{G,\La}$ in Theorem \ref{thm4.7} and mimic the argument to conclude $\mathcal{A}_{G,\La}\cong C^*_{\mathrm{tight}}(S_{G,\La}) \cong \og$. Therefore, Definition \ref{defn2.4} in Section \ref{sec2} and \cite[Definiton 3.9]{li21-ideal} define isomorphic $C^*$-algebras associated to $(G,\La)$ in this case (this follows also from Proposition \ref{prop7.2}).
\end{rem}


\section{Hausdorffness of $\gt$}\label{sec5}

In this section, we investigate the Hausdorff property of $\gt$ via properties of the underlying self-similar $k$-graph $(G,\La)$.

\begin{defn}\label{defn5.1}
Given any $g\in G$, we denote by $\mathbf{SF}_g$ the set of all finite paths in $\La$ strongly fixed by $g$ (see Definition \ref{defn4.2}), that is
$$\mathbf{SF}_g:=\{\tau\in \La:g\cdot\tau=\tau ~ \mathrm{and} ~~ \varphi(g,\tau)=e_G\}.$$
We say $\mathbf{SF}_g$ is \emph{locally exhausted} if for every $v\in \La^0$, there is a finite set $\mathcal{M}\subseteq v\mathbf{SF}_g$ such that
\begin{center}
``for every $\tau \in v\mathbf{SF}_g$, there is $\tau'\in \mathcal{M}$ with $\Lam(\tau,\tau')\neq \emptyset$".
\end{center}
\end{defn}

\begin{thm}\label{thm5.2}
Let $(G,\La)$ be a self-similar $k$-graph. Then $\gt$ is Hausdorff if and only if $\mathbf{SF}_g$ is locally exhausted for all $g\in G\setminus\{e_G\}$.
\end{thm}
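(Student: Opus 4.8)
The plan is to use the known criterion for Hausdorffness of a tight groupoid of germs: $\gt$ is Hausdorff if and only if for every $s\in \sg$ the set $\{e\in \es : e\leq s\}$ admits a \emph{finite} cover whenever it is nonempty; equivalently (see \cite{exe16}), for each $s\in\sg$ the collection of idempotents below $s$ is either empty or ``finitely covered'' inside $\es$. More precisely, I will invoke the characterization that $\gt$ is Hausdorff iff for all $s\in\sg$, the set $J_s := \{e\in\es : e\le s\}$ is a retract in the sense that it has a finite cover (a finite subset $\mathcal C\subseteq J_s$ such that every $e\in J_s$ meets some element of $\mathcal C$). So the whole proof reduces to analyzing, for a fixed triple $s=\{(\xi,g,\eta)\}$, which idempotents $\iota_\mu$ lie below it, and when the resulting down-set is finitely covered.

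First I would reduce to singletons: a general $F\in\sg$ is a finite union of pairwise orthogonal triples, and orthogonality makes the down-set of $F$ a ``disjoint union'' of the down-sets of the singletons (as in Remark \ref{rem4.3}), so $J_F$ has a finite cover iff each $J_{\{(\xi_i,g_i,\eta_i)\}}$ does. Next, for a single triple $s=\{(\xi,g,\eta)\}$, Lemma \ref{lem4.3} tells us exactly which $\iota_\mu$ satisfy $\iota_\mu\le s$: we need $\eta=\xi$, $\mu=\xi\beta$, and $\beta$ strongly fixed by $g$. Thus $J_s=\emptyset$ unless $\eta=\xi$, and when $\eta=\xi$ the down-set is naturally identified (shifting off the common prefix $\xi$, using Lemma \ref{lem4.6}(1)) with the set of finite subsets of $\{\iota_\beta : \beta\in s(\xi)\mathbf{SF}_g\}$ with pairwise $\Lam$-disjoint entries — i.e.\ with the down-set of $\iota_{s(\xi)}$ \emph{restricted to} the sub-collection indexed by $s(\xi)\mathbf{SF}_g$. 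Then, by the cover analysis of Lemma \ref{lem4.6}(2)--(3), a finite subset $\mathcal C\subseteq J_s$ is a cover iff the set $\{\beta : \iota_{\xi\beta}\in\bigcup_{E\in\mathcal C}E\}\subseteq s(\xi)\mathbf{SF}_g$ is ``exhaustive within $s(\xi)\mathbf{SF}_g$'' in the sense that every $\tau\in s(\xi)\mathbf{SF}_g$ has $\Lam(\tau,\tau')\neq\emptyset$ for some $\tau'$ in it. That is precisely the condition that $s(\xi)\mathbf{SF}_g$ be locally exhausted at the vertex $s(\xi)$.

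Putting the two directions together: if $\mathbf{SF}_g$ is locally exhausted for every $g\neq e_G$, then for every triple $s$ (with $g=e_G$ the down-set is just $J_{\iota_\xi}$, which is finitely covered by $\{\iota_\xi\}$ itself, and for $g\neq e_G$ with $\eta=\xi$ we get a finite $\mathcal M\subseteq s(\xi)\mathbf{SF}_g$ from the hypothesis) the down-set $J_s$ has a finite cover, hence $\gt$ is Hausdorff. Conversely, if $\mathbf{SF}_g$ fails to be locally exhausted at some vertex $v$ for some $g\neq e_G$, pick $\xi\in\La^0$ with $s(\xi)=v$ (e.g.\ $\xi=v$) and consider $s=\{(v,g,g^{-1}\cdot v)\}$; if $g\cdot v=v$ this is a legitimate triple with $\eta=\xi=v$, and its down-set $J_s\leftrightarrow$ the finite $\Lam$-disjoint subsets of $\{\iota_\tau : \tau\in v\mathbf{SF}_g\}$ admits no finite cover, so $\gt$ is not Hausdorff. (A small point to check: if $g\cdot v\neq v$ but $v$ still witnesses failure of local exhaustion, one observes $v\mathbf{SF}_g$ is then automatically finite — indeed empty or trivially handled — so this case does not actually obstruct; more carefully, $\tau\in v\mathbf{SF}_g$ forces $g\cdot v = g\cdot r(\tau)=r(g\cdot\tau)=r(\tau)=v$, so whenever $v\mathbf{SF}_g\neq\emptyset$ we do have $g\cdot v=v$ and the triple above is valid.)

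\textbf{Main obstacle.} The real work — and the step I expect to be most delicate — is making rigorous the identification between the down-set $J_{\{(\xi,g,\xi)\}}$ and the ``$\mathbf{SF}_g$-restricted'' down-set of $\iota_{s(\xi)}$, and then transporting the notion of a finite cover across this identification so that covers of one correspond exactly to locally-exhausting finite subsets of $s(\xi)\mathbf{SF}_g$. One must check that $\mathbf{SF}_g$ behaves well under taking prefixes and extensions (so that the shift by $\xi$ and the strongly-fixed condition interact correctly — e.g.\ if $\xi\beta$ has $\beta$ strongly fixed by $g$ and $\beta\gamma$ arises as a minimal common extension, is $\gamma$-part still strongly fixed by the relevant cocycle image?), and that $\Lam$-disjointness of $\tau,\tau'$ in $\La$ matches $\Lam$-disjointness of $\xi\tau,\xi\tau'$ (true, since $\Lam(\xi\tau,\xi\tau')=\Lam(\tau,\tau')$ by unique factorization). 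Once these bookkeeping facts are in place, the equivalence with Definition \ref{defn5.1} is essentially a restatement, and the theorem follows from the standard tight-groupoid Hausdorffness criterion of \cite{exe16}.
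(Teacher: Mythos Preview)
Your proposal is correct and follows essentially the same route as the paper: both invoke \cite[Theorem 3.16]{exe16}, reduce to singletons $s=\{(\xi,g,\xi)\}$ via Lemma~\ref{lem4.2}/Remark~\ref{rem4.3}, use Lemma~\ref{lem4.3} to identify $\mathcal{J}_s$ with idempotents indexed by $s(\xi)\mathbf{SF}_g$, and then match finite covers with locally-exhausting finite subsets; the converse via $s=\{(v,g,v)\}$ (after observing $\tau\in v\mathbf{SF}_g$ forces $g\cdot v=v$) is also identical.

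One minor remark: the ``main obstacle'' you flag is overstated. Since a cover $\mathcal{C}\subseteq\mathcal{J}_s$ only requires that each $E\in\mathcal{J}_s$ have \emph{nonzero product} with some member of $\mathcal{C}$ (not that the product lie in $\mathcal{J}_s$), you never need to check that minimal common extensions of strongly fixed paths remain strongly fixed. The only bookkeeping actually required is $\Lam(\xi\tau,\xi\tau')=\Lam(\tau,\tau')$, which you already note.
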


\begin{proof}
Our main tool here is \cite[Theorem 3.16]{exe16} which says that $\gt$ is Hausdorff if and only if $\mathcal{J}_F$ has a finite cover in $\es$ for all $F\in \sg$. First, suppose that $\mathbf{SF}_g$ is locally exhausted for every $g\neq e_G$. Note that, by Lemma \ref{lem4.2}, every $E\in \mathcal{J}_F$ is a finite subset of
$$\left\{(\mu,e_G,\mu): \iota_\mu\in \mathcal{J}_{\{(\xi,g,\xi)\}} ~\mathrm{for ~ some~} (\xi,g,\xi)\in F\right\},$$
so it is equivalent to prove that every principal ideal of the form $\mathcal{J}_{\{(\xi,g,\xi)\}}$ has a finite cover in $\es$.

To do this, fix some $s=\{(\xi,g,\xi)\}$ in $\sg$. In the case that there are no nonzero idempotents $E\leq s$, then $\mathcal{J}_s=\emptyset$ and our cover will be trivial. Otherwise, using Lemma \ref{lem4.3}, we can assume that there exists an element of the form $\iota_{\xi\tau}\in \mathcal{J}_s$ where $\tau$ is strongly fixed for $g$. If $g=e_G$, then $\{(\xi,e_G,\xi)\}$ is a cover for $\mathcal{J}_s$. If $g\neq e_G$, by hypothesis, $s(\xi)\mathbf{SF}_g$ is exhausted by a finite set $\mathcal{M}\subseteq s(\xi)\mathbf{SF}_g$. We will prove that
$$\mathcal{C}=\big\{\{(\xi\tau,e_G,\xi\tau)\}:\tau\in \mathcal{M}\big\}$$
is a finite cover for $\mathcal{J}_s$. Indeed, for any $E\in \mathcal{J}_s$ and $(\xi\tau',e_G,\xi\tau')\in E$, since $\mathcal{M}$ exhausts $s(\xi)\mathbf{SF}_g$, there is $\tau\in \mathcal{M}$ such that $\Lam(\tau,\tau')\neq \emptyset$. Thus
$$\{(\xi\tau,e_G,\xi\tau)\} E= \{(\xi\tau,e_G,\xi\tau)\}\{(\xi\tau',e_G,\xi\tau')\}\neq 0,$$
where $\{(\xi\tau,e_G,\xi\tau)\}\in \mathcal{C}$. Therefore, in all cases $\mathcal{J}_s$ has a finite cover, concluding that $\gt$ is Hausdorff by \cite[Theorem 3.16]{exe16}.

Conversely, assume that $\gt$ is Hausdorff. Fix $e_G\neq g\in G$ and $v\in \La^0$. If $v\mathbf{SF}_g\neq \emptyset$ and $\tau\in v\mathbf{SF}_g$, then $g\cdot\tau=\tau$ and in particular
$$g\cdot v=g\cdot r(\tau)= r(g\cdot\tau)=r(\tau)=v.$$
So, $s=\{(v,g,v)\}$ belongs to $\sg$ and $\mathcal{J}_s$ has a finite cover by \cite[Theorem 3.16]{exe16}, say $\mathcal{C}$. Recall again from Lemma \ref{lem4.3} that every idempotent $E$ in $\mathcal{J}_s$ is a finite subset of $\{(\tau,e_G,\tau): \tau\in v\mathbf{SF}_g\}$. Define $\mathcal{M}:=\{\tau:(\tau,e_G,\tau)\in \cup_{E\in \mathcal{C}} E\}$ which is a finite set. For every $\tau\in v\mathbf{SF}_g$ we have $\iota_\tau \in \mathcal{J}_s$, and since $\mathcal{C}$ is a cover for $\mathcal{J}_s$, there exists $E\in \mathcal{C}$ such that
$$\iota_\tau E \neq 0,$$
or equivalently $\Lam(\tau,\tau')\neq \emptyset$ for some $(\tau,e_G,\tau)\in E$. This says that $v\mathbf{SF}_g$ is exhausted by $\mathcal{M}$, completing the proof.
\end{proof}

Another condition on a semigroup $S$ being equivalent to the Hausdorffness of $\mathcal{G}_{\mathrm{tight}}(S)$ is the
following: we say $S$ is a \emph{weak semilattice} if for every singly principle sets $s\downarrow, t\downarrow \subseteq S$, intersection $s\downarrow \cap t\downarrow$ is finitely generated as a lower set (see \cite[Theorem 4.20]{ste10} for example). So, Theorem \ref{thm5.2} implies that $\sg$ is a weak semilattice if and only if $\mathbf{SF}_g$ in $\La$ is locally exhausted for all $g\in G\setminus \{e_G\}$.

\begin{rem}
In the 1-graph setting, \cite[Proposition 5.8]{exe17} implies that a self-similar graph $(G,E)$ is pseudo free if and only if the associated inverse semigroup $\mathcal{S}_{G,E}$ is $E^*$-unitary. However, this result does not hold for our inverse semigroup of a self-similar $k$-graph (even for 1-graphs). The main reason comes from the initial differences between the definition of $\sg$ in Definition \ref{defn3.1} and that of $\mathcal{S}_{G,E}$ in \cite[Definition 4.1]{exe17}: elements of $\sg$ are sets of triples while those of $\mathcal{S}_{G,E}$ are just single triples. Indeed, even under the pseudo freeness assumption, we may have a non-idempotent $F\in \sg$ and an idempotent $E\in \mathcal{E}(\sg)$ with $E\subset F$ (hence $E\leq F$ in this case).
\end{rem}

Nevertheless, Theorem \ref{thm5.2} concludes:

\begin{cor}\label{cor5.4}
If a self-similar $k$-graph $(G,\La)$ is pseudo free (Definition \ref{defn4.2}), then the groupoid $\gt$ is Hausdorff.
\end{cor}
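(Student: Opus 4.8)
The plan is to deduce this immediately from Theorem \ref{thm5.2}, so the entire content is unwinding definitions. First I would recall that, by Definition \ref{defn4.2}, pseudo freeness of $(G,\La)$ means precisely that for every $g\in G\setminus\{e_G\}$ there is \emph{no} path $\tau\in\La$ with $g\cdot\tau=\tau$ and $\varphi(g,\tau)=e_G$; in the notation of Definition \ref{defn5.1} this says exactly that $\mathbf{SF}_g=\emptyset$ for all $g\neq e_G$. In particular $v\mathbf{SF}_g=\emptyset$ for every $v\in\La^0$.

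Next I would verify that the empty set is (trivially) locally exhausted. Fixing $g\in G\setminus\{e_G\}$ and $v\in\La^0$, take $\mathcal{M}:=\emptyset\subseteq v\mathbf{SF}_g$; since $v\mathbf{SF}_g=\emptyset$, the requirement ``for every $\tau\in v\mathbf{SF}_g$ there is $\tau'\in\mathcal{M}$ with $\Lam(\tau,\tau')\neq\emptyset$'' is vacuously satisfied. Hence $\mathbf{SF}_g$ is locally exhausted for all $g\in G\setminus\{e_G\}$, and Theorem \ref{thm5.2} gives that $\gt$ is Hausdorff. (Equivalently, one could run the forward direction of the proof of Theorem \ref{thm5.2} directly: for $s=\{(\xi,g,\xi)\}\in\sg$ with $g\neq e_G$, Lemma \ref{lem4.3} forces any nonzero $E\leq s$ to involve a path strongly fixed by $g$, of which there are none, so $\mathcal{J}_s=\emptyset$ has the empty cover; the case $g=e_G$ is covered by the cover $\{(\xi,e_G,\xi)\}$, and then \cite[Theorem 3.16]{exe16} applies.)

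There is essentially no obstacle here: the only thing to be careful about is the bookkeeping that pseudo freeness is stated as a non-existence statement about \emph{individual} strongly fixed paths, and that this is exactly what makes the ``locally exhausted'' hypothesis of Theorem \ref{thm5.2} hold vacuously. Since the substantive work is entirely contained in Theorem \ref{thm5.2}, the proof is just these two observations.
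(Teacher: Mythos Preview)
Your proposal is correct and takes essentially the same approach as the paper: both observe that pseudo freeness means $\mathbf{SF}_g=\emptyset$ for every $g\neq e_G$, so the locally exhausted hypothesis of Theorem~\ref{thm5.2} holds vacuously and the result follows.
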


\begin{proof}
If $(G,\La)$ is pseudo free, there are no strongly fixed paths by $g$ for all $g\neq e_G$, and hence $\mathbf{SF}_g=\emptyset$. Therefore the result immediately follows from Theorem \ref{thm5.2}.
\end{proof}

Note that Corollary \ref{cor5.4} could be a translation of \cite[Corollary 4.13]{ort20} for the LCSC $\La\rtimes^{\varphi} G$. Indeed, a computation shows that a Zappa-Sz$\acute{\mathrm{e}}$p product $\La\rtimes^{\varphi} G$ is right cancelative if and only if the associated self-similar $k$-graph $(G,\La,\varphi)$ is pseudo free (cf. \cite[Remark 7.5]{ort20}). Since the finite alignment of $\La\rtimes^{\varphi} G$ is equivalent to that of $\La$, \cite[Proposition 3.6]{don14} and \cite[Corollary 4.13]{ort20} follow also Corollary \ref{cor5.4} above.


\section{The tight spectrum and tight groupoid}\label{sec6}

The boundary path space $\partial \La$ of a finitely aligned $k$-graph $\La$ was introduced in \cite[Section 5]{far05} equipped with a locally compact and Hausdorff topology (see also \cite{web11}). In fact, in \cite{far05} it is proved that $\partial \La$ is a closed subset of a larger topological space $X_\La$ (named as the path space of $\La$) \cite[Lemma 5.12]{far05}. In this section, we prove that the tight spectrum of $\sg$ is homeomorphic to the boundary space $\partial \La$. This correspondence is very helpful and useful for analyzing the tight groupoid $\gt$ and the associated $C^*$-algebra; in particular, to prove Theorems \ref{thm8.3} and \ref{thm9.6} below.

First, a lemma:

\begin{lem}\label{lem6.1}
Let $\mathcal{F}$ be an ultrafilter in $\mathcal{E}(\sg)$ (i.e. $\mathcal{F}\in \widehat{\mathcal{E}}_\infty(\sg)$). Then for each $F\in \mathcal{F}$, there exists a unique triple $(\la,e_G,\la)\in F$ such that $\iota_\la \Cap E$ for all $E\in \mathcal{F}$ (in the sense that $\iota_\la E=\iota_\la \wedge E\neq 0$). Moreover, such idempotent $\iota_\la$ belongs to $\mathcal{F}$.
\end{lem}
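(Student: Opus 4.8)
The plan is to exploit the concrete description of $\es$ from Corollary \ref{cor3.5}: every idempotent is a finite set of triples of the form $(\la, e_G, \la)$ with pairwise $\Lam(\la,\la')=\emptyset$, and $\iota_\la\wedge\iota_{\la'}=\iota_\la\iota_{\la'}\neq 0$ precisely when $\Lam(\la,\la')\neq\emptyset$. So fix $\mathcal{F}\in\widehat{\mathcal{E}}_\infty(\sg)$ and $F\in\mathcal{F}$; write $F=\iota_{\la_1}\cup\cdots\cup\iota_{\la_n}$ with the $\la_i$ pairwise orthogonal (property $(\perp)$). Since $F\neq 0$ we have $n\geq 1$. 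First I would show that at least one index $i$ has the property that $\iota_{\la_i}\Cap E$ for \emph{every} $E\in\mathcal{F}$. Suppose not: then for each $i$ there is some $E_i\in\mathcal{F}$ with $\iota_{\la_i}E_i=0$. Because $\mathcal{F}$ is a filter it is closed under multiplication, so $E:=F E_1\cdots E_n\in\mathcal{F}$, in particular $E\neq 0$ (filters do not contain $0$). But $E\leq F$, so by Remark \ref{rem4.3} (or directly Lemma \ref{lem4.2}) every triple of $E$ lies below some $\iota_{\la_i}$, i.e.\ $E=\bigsqcup_i E_i'$ with $E_i'\in\mathcal{J}_{\iota_{\la_i}}$; and since $E\leq E_i$ as well we get $E_i'\leq \iota_{\la_i}E_i=0$, forcing $E=0$, a contradiction. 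Hence such an index exists; call the corresponding triple $(\la,e_G,\la)\in F$.

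Next I would check $\iota_\la\in\mathcal{F}$. Since $\iota_\la\Cap E$ for all $E\in\mathcal{F}$, the set $\mathcal{F}'$ generated by $\mathcal{F}\cup\{\iota_\la\}$ under multiplication and upward closure is a filter not containing $0$ (every finite product of elements of $\mathcal{F}$ with $\iota_\la$ is of the form $\iota_\la E$ for some $E\in\mathcal{F}$ after absorbing, hence nonzero), and it contains $\mathcal{F}$. By maximality of the ultrafilter $\mathcal{F}$ we get $\mathcal{F}'=\mathcal{F}$, so $\iota_\la\in\mathcal{F}$.

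Finally, uniqueness within $F$: suppose $(\la,e_G,\la)$ and $(\la',e_G,\la')$ are distinct triples of $F$ both with the intersection property. Then $\iota_\la,\iota_{\la'}\in\mathcal{F}$ by the previous paragraph, so $\iota_\la\iota_{\la'}\in\mathcal{F}$ and in particular $\iota_\la\iota_{\la'}\neq 0$, i.e.\ $\Lam(\la,\la')\neq\emptyset$. But $F$ consists of pairwise orthogonal triples, so by Remark \ref{rem3.2} this forces $(\la,e_G,\la)=(\la',e_G,\la')$, a contradiction. Hence the triple is unique.

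The only real obstacle is the very first step — producing the index $i$ — and the argument above handles it by the standard ``intersect finitely many witnesses and land at $0$'' trick, using crucially that a filter contains no zero and that any $E\le F$ decomposes along the orthogonal pieces $\iota_{\la_i}$ of $F$ (Lemma \ref{lem4.2}/Remark \ref{rem4.3}). Everything else is bookkeeping with Corollary \ref{cor3.5} and the maximality of ultrafilters. I would also remark that this lemma is the $\sg$-analogue of the classical fact for $\mathcal{S}_\La$ that ultrafilters in the idempotent semilattice are ``threads'' of coherent paths, which sets up the identification of $\widehat{\mathcal E}_\infty(\sg)$ with $\partial\La$ in the rest of the section.
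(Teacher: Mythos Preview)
Your proof is correct and follows essentially the same strategy as the paper's: assume no $\iota_{\la_i}$ works, collect finitely many witnessing $E_i\in\mathcal{F}$, multiply them to reach $0\in\mathcal{F}$, then use the ultrafilter property for membership and orthogonality for uniqueness. The only cosmetic difference is that the paper first enumerates $\mathcal{F}$ via countability and passes to a decreasing chain $E_i=F_0\wedge\cdots\wedge F_i$ before picking the witnesses, whereas your direct product $FE_1\cdots E_n$ avoids this (unnecessary) appeal to countability.
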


\begin{proof}
Fix some $F\in \mathcal{F}$. Since $\mathcal{F}$ is countable, we may arrange it as $\mathcal{F}=\{F_0,F_1,\ldots\}$ with $F_0=F$. For each $i\geq 0$, define $E_i=F_0\wedge F_1 \wedge \ldots \wedge F_i$. Then $E_i\in \mathcal{F}$ because $\mathcal{F}$ is a filter, and we have $E_0\geq E_1\geq \cdots$. So, it suffices to find a unique $(\la,e_G,\la)\in F$ such that $\iota_\la \Cap E_i$ for all $i\geq 1$.

On the contrary, assume that such $(\la,e_G,\la)$ in $F$ does not exist. Write $F=\{(\la_j,e_G,\la_j):1\leq j \leq l\}$. Then for any $1\leq j\leq l$, there is $E_{i_j} \in \mathcal{F}$ such that $\iota_{\la_j}\wedge E_{i_j}=0$. Setting $i_0:=\max \{i_j: 1\leq j\leq l\}$, since $\{E_i\}$ is decreasing, we thus have $E_{i_0}\wedge F=0$, contradicting that $0\notin \mathcal{F}$. Therefore, the desired triple $(\la,e_G,\la)$ in $F$ exists. Also, the fact $\iota_\la \Cap E$ for all $E\in \mathcal{F}$ implies $\iota_\la\in \mathcal{F}$ (because $\mathcal{F}$ is an ultrafilter, see Remark \ref{rem6.3}(1) below).

Furthermore, such $\iota_\la$ must be unique: if $\iota_\la,\iota_{\la'} \in \mathcal{F}$ with $(\la,e_G,\la), (\la',e_G,\la') \in F$, then $\iota_\la \iota_{\la'}\neq 0$ or equivalently $\Lam(\la,\la')\neq \emptyset$, and hence $\la=\la'$ by the ($\perp$) property for $F$.
\end{proof}

For the reader's convenience, we recall the definition of $\partial \La$ for a finitely aligned $k$-graph $\La$. We use the following special $k$-graphs.

\begin{ex}
Given any $m\in(\N\cup \{\infty\})^k$, let $\Omega_{k,m}$ be the category
$$\Omega_{k,m}:=\left\{(p,q)\in \N^k\times\N^k : p\leq q\leq m \right\}$$
with $r(p,q):=(p,p)$ and $s(p,q):=(q,q)$. Then $\Omega_{k,m}$ equipped with the degree map $d(p,q):=q-p$ is a $k$-graph.
\end{ex}

Following \cite[Definition 5.10]{far05}, a \emph{boundary path} in $\La$ is a graph morphism $x:\Omega_{k,m} \rightarrow \La$ such that for all $(p,p)\in\Omega_{k,m}^0$ and $X\in x(p,p)\mathbf{FE}(\La)$, there exists $\mu\in X$ with $x(p,p+d(\mu))=\mu$. We simply write $x(p,p)$ by $x(p)$. The set of all boundary paths in $\La$ is denoted by $\partial \La$. Then, for any $x\in \partial \La$, $r(x):=x(0)$ is defined as \emph{the range of $x$} and $d(x):=m\in (\N\cup \{\infty\})^k$ as \emph{the degree of $x$}. Recall from \cite[Lemma 5.13]{far05} that $v\partial\La=\{x\in \partial \La: r(x)=v\}$ is always nonempty for all $v\in \La$.

For every $x\in \partial\La$ and $n\leq d(x)$, \emph{the shift of $x$} is the boundary path $\sigma^n(x)\in\partial \La$ such that $d(\sigma^n(x))=d(x)-n$ and $\sigma^n(x)(p,q):=x(n+p,n+q)$ for $p\leq q\leq d(x)-n$. Notice that the factorisation property implies $x(0,n)\sigma^n(x)=x$.

Furthermore, in \cite[Proposition 5.4]{far05}, the authors introduced a locally compact Hausdorff topology on the path space $X_\La$ and then showed that $\partial \La$ is a closed subset of $X_\La$ \cite[Lemma 5.12]{far05}. In particular, we may consider $\partial \La$ as a topological space  generated by the compact open neighborhoods
$$Z(\la):=\la \partial \La=\{\la y: y\in \partial \La, r(y)=s(\la)\}.$$
We refer the reader to \cite{far05} for details.

The following descriptions for ultrafilters and tight filters are well-known and will be useful in the sequel (they may be derived from \cite[Theorem IV.3.12]{bur81} and \cite[Proposition 11.9]{exe08}, respectively). The argument after Proposition \ref{prop6.4} insures that the tight spectrum of $\sg$ coincides with the ultrafilter space.

\begin{rem}\label{rem6.3}
The ultrafilters and tight filters in $\es$ can be described as follows:
\begin{enumerate}[(1)]
  \item A filter $\mathcal{F}$ in $\es$ is an ultrafilter if and only if for every $E\in \es$
  $$E\Cap F \hspace{3mm} \mathrm{for ~ all~} F\in \mathcal{F} \hspace{5mm} \Longrightarrow \hspace{5mm} E\in \mathcal{F}.$$
  \item A filter $\mathcal{F}$ in $\es$ is tight if and only if
  $$F\in \mathcal{F} ~~~ \mathrm{and} ~~~ \mathcal{C} ~ \mathrm{is ~ a ~ finite ~ cover ~ for ~} F \hspace{3mm} \Longrightarrow \hspace{3mm} \mathcal{C}\cap \mathcal{F}\neq \emptyset.$$
\end{enumerate}
\end{rem}

\begin{prop}\label{prop6.4}
Let $(G,\La)$ be a self-similar $k$-graph. For any $x\in \partial\La$ define
$$\mathcal{F}_x=\{E\in \es: (x(0,n),e_G,x(0,n)) \in E ~~ \mathrm{for ~ some ~} n\leq d(x)\}.$$
Then the map $x\mapsto \mathcal{F}_x$ is a homeomorphism from $\partial\La$ onto the ultrafilter space $\widehat{\mathcal{E}}_\infty(\sg)$.
\end{prop}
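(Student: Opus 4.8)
The plan is to establish the homeomorphism in four stages: (i) check $\mathcal{F}_x$ is a well-defined ultrafilter, (ii) construct the inverse map sending an ultrafilter to a boundary path, (iii) verify the two maps are mutually inverse, and (iv) check bicontinuity. For stage (i), note that the set $\{(x(0,n),e_G,x(0,n)) : n\le d(x)\}$ is totally ordered under $\le$ in $\es$ (by Lemma~\ref{lem4.3}, since $x(0,m)$ is a prefix of $x(0,n)$ when $m\le n$), so $\mathcal{F}_x$ is closed under meets and upward closed, hence a filter; it is proper because $\emptyset\notin\mathcal{F}_x$. To see it is an \emph{ultra}filter I will use the criterion of Remark~\ref{rem6.3}(1): if $E\in\es$ satisfies $E\Cap F$ for all $F\in\mathcal{F}_x$, then in particular $E\Cap\iota_{x(0,n)}$ for every $n\le d(x)$, which by Corollary~\ref{cor3.5} means some $(\mu,e_G,\mu)\in E$ has $\La^{\min}(\mu,x(0,n))\ne\emptyset$; a counting/exhaustiveness argument using the boundary-path condition (that $x$ ``chooses'' an element of every finite exhaustive set at every vertex it visits) forces some such $\mu$ to actually be a prefix $x(0,n)$, so $E\in\mathcal{F}_x$. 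This last point — converting ``$E$ has nonempty minimal common extension with every prefix of $x$'' into ``$E$ contains a prefix of $x$'' — is where the boundary-path axiom is essential and is the technical heart of stage (i).

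For stage (ii), given $\mathcal{F}\in\widehat{\mathcal{E}}_\infty(\sg)$, I would apply Lemma~\ref{lem6.1}: each $F\in\mathcal{F}$ contains a unique $(\la_F,e_G,\la_F)$ with $\iota_{\la_F}\in\mathcal{F}$. The collection $\{\la_F : F\in\mathcal{F}\}$ is then totally ordered by the prefix relation (any two have nonempty minimal common extension, hence by Lemma~\ref{lem4.3}/the $(\perp)$ property one is a prefix of the other — actually one must argue they are prefixes of a common longer one, using that $\mathcal{F}$ is a filter and $\iota_{\la_F}\wedge\iota_{\la_{F'}}\in\mathcal{F}$), so they assemble into a (possibly infinite) path, i.e.\ a graph morphism $x_{\mathcal{F}}:\Omega_{k,m}\to\La$ where $m=\sup_F d(\la_F)$. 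One then checks $x_{\mathcal{F}}\in\partial\La$: given a finite exhaustive set $X$ at some $x_{\mathcal{F}}(p)$, the idempotent $\{(\mu,e_G,\mu):\mu\in X\cdot\text{(suitable prefix)}\}$ is obtained by extending and it forms a cover of $\iota_{x_{\mathcal{F}}(0,p)}$ by Lemma~\ref{lem4.6}(3); since $\mathcal{F}$ is tight (ultrafilters are tight), Remark~\ref{rem6.3}(2) gives that $\mathcal{F}$ meets this cover, which yields the required $\mu\in X$ with $x_{\mathcal{F}}(p,p+d(\mu))=\mu$.

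Stage (iii) is then bookkeeping: starting from $x$, the path reconstructed from $\mathcal{F}_x$ has prefixes exactly the $x(0,n)$, so it equals $x$; starting from $\mathcal{F}$, the filter $\mathcal{F}_{x_{\mathcal{F}}}$ is generated by the $\iota_{\la_F}$ and hence equals $\mathcal{F}$ by maximality (it is contained in $\mathcal{F}$ and both are ultrafilters). For stage (iv), continuity and openness follow by matching basic open sets: the basic neighborhood $Z(\la)=\la\partial\La$ of $\partial\La$ corresponds under $x\mapsto\mathcal{F}_x$ to $\{\mathcal{F}: \iota_\la\in\mathcal{F}\}=D^{\iota_\la}\cap\widehat{\mathcal{E}}_\infty(\sg)$, and more generally the neighborhoods $N(\iota_\la;\iota_{\tau_1},\dots,\iota_{\tau_n})$ pull back to $Z(\la)\setminus\bigcup_i Z(\lambda\cdot\text{ext})$-type sets, which are exactly the basic opens of $\partial\La$ described in \cite{far05}; so the bijection carries a basis to a basis. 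I expect stage (i)'s ultrafilter verification — precisely, showing that an idempotent cofinally intersecting all the $\iota_{x(0,n)}$ must itself contain one of them, which is where finite alignment and the exhaustiveness built into $\partial\La$ interact — to be the main obstacle; everything else is structural.
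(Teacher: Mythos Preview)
Your four-stage outline is exactly the paper's, and most of the details match; the proposal is correct. Two points of comparison are worth noting. First, your stage~(i) worry is real but the paper dispatches it in one line: from $E\Cap\iota_{x(0,n)}$ for all $n\le d(x)$, the paper simply says ``since $E$ is finite we must have $(x(0,n'),e_G,x(0,n'))\in E$ for some $n'\le d(x)$''. This \emph{does} tacitly use that $x$ is a boundary path (your counterexample instinct is right---for a general graph morphism $x$ one could have $E=\{\iota_\mu\}$ with $\mu$ a proper extension of $x$), but the justification is a short pigeonhole-plus-directedness argument rather than the exhaustiveness machinery you anticipate; so this is not really the hard step. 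Second, your stage~(ii) verification that $x_{\mathcal F}\in\partial\Lambda$ via ``ultrafilters are tight'' together with Lemma~\ref{lem4.6}(3) is a genuinely different route from the paper's. The paper instead proves directly (its Claim~I) that whenever $m_i<\infty$ one has $x_{\mathcal F}(m_ie_i)\Lambda^{e_i}=\emptyset$---arguing by contradiction from the ultrafilter criterion of Remark~\ref{rem6.3}(1)---and then checks the finite-exhaustive condition by hand using this source property. Your tightness argument is cleaner and more in the inverse-semigroup spirit; the paper's is more elementary and self-contained. For stage~(iv), the paper argues via convergence of nets rather than matching bases, but this is cosmetically the same.
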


\begin{proof}
First, note that $\mathcal{F}_x=\bigcup_{n\leq d(x)} \iota_{x(0,n)}\uparrow$, so $\mathcal{F}_x$ is a filter in $\es$ for every $x\in \partial \La$. To see that $\mathcal{F}_x$ is an ultrafilter, we can apply Remark \ref{rem6.3}(1). For, pick some $E\in \es$ satisfying $E\Cap F$ for all $F\in \mathcal{F}_x$. In particular, $E\wedge \iota_{x(0,n)}\neq 0$ for all $n\in \N^k$ with $n\leq d(x)$, and since $E$ is finite we must have $(x(0,n'),e_G,x(0,n'))\in E$ for some $n'\leq d(x)$. Hence $E\in \mathcal{F}_x$, concluding that $\mathcal{F}_x$ is an ultrafilter by Remark \ref{rem6.3}(1).

As injectivity of the map is clear, let us prove the surjectivity. In order to do this, we show that every ultrafilter in $\mathcal{E}(\sg)$ is of the form $\mathcal{F}_x$ with $x\in \partial\La$. So, fix $\mathcal{F}\in \widehat{\mathcal{E}}_\infty(\sg)$. Define $T=\{\la\in\La:\iota_\la\in \mathcal{F}\}$ and let $m:=\bigvee_{\la\in T} d(\la)$. Recall that for every $\iota_\la,\iota_{\la'}\in \mathcal{F}$ we have $\iota_\la\iota_{\la'}\neq 0$, or equivalently $\Lam(\la,\la')\neq \emptyset$. Then for each $n\in \N^k$ with $n\leq m$, there is a unique $\la\in T$ such that $d(\la)=n$. We may thus define a well-defined graph homomorphism $x:\Omega_{k,m}\rightarrow \La$, by $x(0,n)=\la$ for every $n\leq m$, where $\la\in T$ with $d(\la)=n$. Now, we need to check that $x$ is a boundary path in $\La$.

{\bf Claim I:} Letting $m=(m_1,\ldots,m_k)\in (\N\cup\{\infty\})^k$, if $m_i<\infty$, then $x(m_ie_i)\La^{e_i}=\emptyset$.

To prove Claim I, we assume $m_i<\infty$ for some $1\leq i\leq k$. Write $\mu=x(0,m_ie_i)$ for simplicity. Note that $\iota_\mu\wedge \iota_\la \neq 0$ for all $\la\in T$ because $\mathcal{F}$ is an ultrafilter. If, on the contrary, there exists $\alpha\in x(m_ie_i)\La^{e_i}=s(\mu)\La^{e_i}$, then we have $\iota_{\mu\alpha}\wedge \iota_\la \neq 0$ as well, or $\iota_{\mu\alpha}\Cap \iota_\la$ for all $\la\in T$. Thus, using Lemma \ref{lem6.1}, $\iota_{\mu\alpha}\Cap F$ for all $F\in \mathcal{F}$, which follows $\iota_{\mu\alpha}\in \mathcal{F}$ and $\mu\alpha\in T$ by Remark \ref{rem6.3}(1). But $m_ie_i<(m_i+1)e_i=d(\mu\alpha)$, contradicting the definition $m=\bigvee_{\la\in T}d(\la)$. Therefore, Claim (I) holds.

{\bf Claim II:} $x$ is a boundary path.

In order to prove Claim II, let $n\in \mathbb{N}$ with $n\leq m$ and assume $\Gamma$ is a finite exhaustive set in $x(n)\La$. Set $n':=\bigvee_{\gamma\in \Gamma}d(\gamma)$. Observe that Claim I implies $n+n'\leq m$. Since $x(n,n+n')\in x(n)\La$ and $\Gamma$ is exhaustive, there exists $\gamma\in \Gamma$ such that $\Lam(x(n,n+n'),\gamma)\neq \emptyset$. But $d(\gamma)\leq n'=d(x(n,n+n'))$, hence we must have $x(n,n+n')=\gamma\beta$ for some $\beta\in \La$. Consequently, $\gamma=x(n,n+d(\gamma))$, and $x$ is a boundary path.

Moreover, it is clear that $\mathcal{F}_x\subseteq \mathcal{F}$, and since $\mathcal{F}_x$ is an ultrafilter as saw in the first part of proof, we conclude $\mathcal{F}=\mathcal{F}_x$. This proves that the map is surjective.

It remains to show that the correspondence $x\mapsto \mathcal{F}_x$ is open and continuous. For this, recall that $\{Z(\la):\la\in \La\}$ is a compact open basis for the topology on $\partial \La$ \cite[Section 5]{far05}. Hence, a net $\{x_t\}_{t\in I}$ converges to $x$ in $\partial \La$ if and only if for each $\la\in \La$ there is $t_0\in I$ such that for every $t\geq t_0$,
$$x_t\in Z(\la) \hspace{3mm} \Longleftrightarrow \hspace{3mm} x\in Z(\la).$$
On the other hand, a net $\{\mathcal{F}_{x_t}\}_{t\in I}$ converges to $\mathcal{F}_x$ in $\widehat{\mathcal{E}}_{\infty}(\sg)$ if and only if for each $E\in \es$, there exists $t_0\in I$ such that for every $t\geq t_0$, we have
$$E\in \mathcal{F}_{x_t} \hspace{3mm} \Longleftrightarrow \hspace{3mm} E\in \mathcal{F}_x.$$
Moreover, using the fact $\mathcal{F}_x=\bigcup_{n\leq d(x)}  \iota_{x(0,n)}\uparrow$, we have
\begin{align*}
E\in \mathcal{F}_x \hspace{3mm} &\Longleftrightarrow \hspace{3mm} \exists n\leq d(x) ~\mathrm{such ~ that~} (x(0,n),e_G,x(0,n)\in E\\
&\Longleftrightarrow \hspace{3mm} x \mathrm{~is ~ of ~ the ~ form ~} x=\la y \mathrm{~for ~ some ~ } \la\in \La \mathrm{~ with ~} (\la,e_G,\la)\in E.
\end{align*}
Combining the above observations follows that the correspondence $x\mapsto \mathcal{F}_x$ is a homeomorphism. We are done.
\end{proof}

In light of Corollary \ref{cor3.5}, the idempotent set $\es$ of $\sg$ equals to that of the inverse semigroup $\mathcal{S}_{\La}$ introduced in \cite{far05} ($\mathcal{S}_{\La}$ is just $\mathcal{S}_{\{e_G\},\La}$ with the trivial group $\{e_G\}$). In particular, we have $\widehat{\mathcal{E}}_0(\sg)=\widehat{\mathcal{E}}_0(\mathcal{S}_\La)$ and $\widehat{\mathcal{E}}_{\mathrm{tight}}(\sg)=\widehat{\mathcal{E}}_{\mathrm{tight}}(\mathcal{S}_\La)$. So, $\widehat{\mathcal{E}}_0(\sg)$ is homeomorphic to path space $X_\La$ of \cite{far05} (see \cite[Remark 5.9]{far05}). On the other hand, since $\widehat{\mathcal{E}}_{\mathrm{tight}}(\sg)$ is the closure of $\widehat{\mathcal{E}}_\infty(\sg)$, Proposition \ref{prop6.4} and \cite[Lemma 5.12]{far05} imply that $\widehat{\mathcal{E}}_{\mathrm{tight}}(\sg)$ coincides with $\widehat{\mathcal{E}}_\infty(\sg)=\partial \La$; hence we may identify $\widehat{\mathcal{E}}_{\mathrm{tight}}(\sg)$ with $\partial \La$ via the correspondence of Proposition \ref{prop6.4}. For the reader's convenience, we restate here the construction of $\gt$ taking into account this identification.

For each $\mu\in \La$, let $Z(\mu):=\mu \partial \La=\{\mu x:x\in \partial \La, r(x)=s(\mu)\}$,
which is a subset of $\partial \La$. Given any $F\in \sg$, if
$$D^{F^*F}:=\bigcup_{(\mu,g,\nu)\in F} Z(\nu)  \hspace{5mm} \mathrm{and} \hspace{5mm} D^{FF^*}:=\bigcup_{(\mu,g,\nu)\in F}Z(\mu),$$
define $\theta_F:D^{F^*F}\rightarrow D^{FF^*}$ by $\theta_F(\nu x)=\mu(g \cdot x)$. Then the action $\theta:\sg \curvearrowright \partial \La$ is $F\mapsto \theta_F$, and its germs are of the form $(F,\nu x)$ such that $F\in \sg$, there is a triple $(\mu,g,\nu)$ in $F$, and $x\in Z(s(\nu))$. Thus the groupoid of germs is
\begin{equation}\label{eq6.1}
\gt=\left\{[F,\nu x]: F\in \sg, \exists(\mu,g,\nu)\in F, x\in Z(s(\nu)) \right\}
\end{equation}
where $[F,\nu x]=[F',\gamma y]$ if and only if $\nu x=\gamma y$ and there exists $E\in \es$ such $\theta_E(\nu x)=\nu x$ and $FE=F'E$. Moreover, the unite space is
$$\gt^{(0)}=\left\{[E,\mu x]: E\in \es, (\mu,e_G,\mu)\in E, x\in Z(s(\mu)) \right\}$$
which can be identified with $\partial \La$ via $[E,\mu x]\mapsto \mu x$. So, the source and range maps are
$$s([F,\nu x])=\nu x \hspace{5mm}  \mathrm{and}  \hspace{5mm} r([F,\nu x])=\mu(g \cdot x)$$
where there is a triple $(\mu,g,\nu)$ in $F$. Note that the property ($\perp$) for elements of $F$ insures that the definition of $r$ is well-defined.

Furthermore, the sets of the form
$$\Theta(F,Z(\nu))=\left\{ [F,\nu x]\in \gt: x\in Z(s(\nu)) \right\}$$
are compact open bisections generating the topology on $\gt$ (it follows from \cite[Proposition 5.7]{far05} that each $Z(\nu)$ is compact, and hence $\Theta(F,Z(\nu))$ is as well). Thus $\gt$ is an ample groupoid.

In the end of this section, we have a look at the relation between $\gt$ and the tight groupoid $\mathcal{G}_{\mathrm{tight}}(\mathcal{S}(\La \rtimes^{\varphi} G))$ of $\La \rtimes^{\varphi} G$ as an LCSC in \cite{ort20}. First, by comparing Corollary \ref{cor3.5} with \cite[Lemma 2.9]{ort20} we see that the idempotent semilattices of $\mathcal{S}_{\La}$, $\sg$ and $\mathcal{T}(\La \rtimes^{\varphi} G)$ coincide, so do their tight spectrums, being equal to $\partial \La$ by Proposition \ref{prop6.4}. Thus, since $\mathcal{G}_{\mathrm{tight}}(\mathcal{S}(\La \rtimes^{\varphi} G))= \mathcal{G}_{\mathrm{tight}}(\mathcal{T}(\La \rtimes^{\varphi} G))$, elements of $\mathcal{G}_{\mathrm{tight}}(\mathcal{S}(\La \rtimes^{\varphi} G))$  are of the form $[\tau^{(\mu,g)}\sigma^{(\nu,h)};\nu x]$ where $x\in Z(s(\nu))= s(\nu)\partial \La$. As one may also write
$$\tau^{(\mu,g)}\sigma^{(\nu,h)}=\tau^{(\mu,g)}\sigma^{(h^{-1}\cdot s(\nu),h^{-1})} \tau^{(s(\nu),h)}\sigma^{(\nu,e_G)}= \tau^{(\mu,gh^{-1})}\sigma^{(\nu,e_G)},$$
hence
$$\mathcal{G}_{\mathrm{tight}}(\mathcal{S}(\La \rtimes^{\varphi} G))=\{ [\tau^{(\mu,g)}\sigma^{(\nu,e_G)};\nu x]: s(\mu)=g \cdot s(\nu) ~~ \mathrm{and} ~~ x\in Z(s(\nu)) \}.$$

One the other side, for each $[F; \nu x]\in \gt$ in (\ref{eq6.1}) there is a unique triple $(\mu,g,\nu)$ in $F$ so that
$$F\{(\nu,e_G,\nu)\}=\{(\mu,g,\nu)\}\{(\nu,e_G,\nu)\}=\{(\mu,g,\nu)\}.$$
So $(F,\nu x) \sim (\{(\mu,g,\nu)\}; \nu x)$, and (\ref{eq6.1}) turns out
\begin{equation}\label{eq6.2}
\gt=\left\{[\{(\mu,g,\nu)\},\nu x]: s(\mu)=g \cdot s(\nu) ~~ \mathrm{and} ~~ x\in Z(s(\nu)) \right\}.
\end{equation}
Therefore, we obtain the following:

\begin{prop}\label{prop6.5}
Let $(G,\La,\varphi)$ be a self-similar $k$-graph and $\La \rtimes^{\varphi}G$ its Zappa-Sz$\acute{e}$p product. Then the map
$$[\{(\mu,g,\nu)\},\nu x] \longmapsto [\tau^{(\mu,g)}\sigma^{(\nu,e_G)};\nu x]$$
is a groupoid isomorphism from $\gt$ onto $\mathcal{G}_{\mathrm{tight}}(\mathcal{S}(\La \rtimes^{\varphi} G))$.
\end{prop}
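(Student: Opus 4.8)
The plan is to exhibit the displayed map as a well-defined, bijective homomorphism of topological groupoids, exploiting that $\gt$ and $\mathcal{G}_{\mathrm{tight}}(\mathcal{S}(\La \rtimes^{\varphi} G))$ are both groupoids of germs sitting over the \emph{same} unit space. Indeed, by Corollary \ref{cor3.5}, \cite[Lemma 2.9]{ort20}, Proposition \ref{prop6.4}, and the equality $\mathcal{G}_{\mathrm{tight}}(\mathcal{S}(\La \rtimes^{\varphi} G)) = \mathcal{G}_{\mathrm{tight}}(\mathcal{T}(\La \rtimes^{\varphi} G))$, the idempotent semilattices $\es$, $\mathcal{E}(\mathcal{S}(\La \rtimes^{\varphi} G))$ and $\mathcal{E}(\mathcal{T}(\La \rtimes^{\varphi} G))$ all coincide, and their tight spectra are all identified with $\partial \La$ in such a way that both groupoids act identically on units. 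Under these identifications the essential point is a single assertion: for each admissible triple $(\mu,g,\nu)$ (meaning $s(\mu) = g\cdot s(\nu)$), the partial homeomorphism of $\partial \La$ induced by $\tau^{(\mu,g)}\sigma^{(\nu,e_G)}$ via the action $\mathcal{S}(\La \rtimes^{\varphi} G) \curvearrowright \widehat{\mathcal{E}}_{\mathrm{tight}}(\mathcal{S}(\La \rtimes^{\varphi} G))$ coincides with $\theta_{\{(\mu,g,\nu)\}}$.

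To prove this I would unwind the action of $\mathcal{S}(\La \rtimes^{\varphi} G)$ on its tight spectrum through the identification with $\partial \La$. Writing the Zappa-Sz$\acute{e}$p multiplication as $(\mu,g)(\beta,h) = (\mu(g\cdot\beta),\varphi(g,\beta)h)$ and using the self-similar equations of Definition \ref{defn2.3}(2), one finds that $\sigma^{(\nu,e_G)}$ restricts on $Z(\nu)$ to the prefix-removal map $\nu x \mapsto x$, while $\tau^{(\mu,g)}$ restricts on $Z(s(\nu))$ to $x \mapsto \mu(g\cdot x)$; composing them yields exactly $\theta_{\{(\mu,g,\nu)\}} \colon Z(\nu) \to Z(\mu)$, $\nu x \mapsto \mu(g\cdot x)$. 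Moreover, the identity $\tau^{(\mu,g)}\sigma^{(\nu,h)} = \tau^{(\mu,gh^{-1})}\sigma^{(\nu,e_G)}$ recalled just before the proposition shows that every nonzero generator of $\mathcal{S}(\La \rtimes^{\varphi} G)$ already has this normal form, so the germs realised by the elements $\tau^{(\mu,g)}\sigma^{(\nu,e_G)}$ exhaust $\mathcal{G}_{\mathrm{tight}}(\mathcal{S}(\La \rtimes^{\varphi} G))$; this is consistent with the explicit descriptions in (\ref{eq6.2}) and in the displayed formula for $\mathcal{G}_{\mathrm{tight}}(\mathcal{S}(\La \rtimes^{\varphi} G))$ preceding the proposition.

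Once the two families of partial homeomorphisms are matched, the remaining checks are formal. Recall that in a groupoid of germs one has $[s,\mathcal{F}] = [t,\mathcal{F}]$ if and only if $se = te$ for some idempotent $e \in \mathcal{F}$, i.e. if and only if the two partial homeomorphisms agree on a basic neighbourhood of the point. Since $\es = \mathcal{E}(\mathcal{S}(\La \rtimes^{\varphi} G))$ and since, for a prefix $\nu\beta$ of $\nu x$, one computes $\{(\mu,g,\nu)\}\,\iota_{\nu\beta} = \{(\mu(g\cdot\beta),\varphi(g,\beta),\nu\beta)\}$ by (\ref{eq3.1}) while the corresponding restriction of $\tau^{(\mu,g)}\sigma^{(\nu,e_G)}$ obeys the same rule, germ equality in $\gt$ translates verbatim into germ equality in the target; this yields simultaneously well-definedness and injectivity, while surjectivity is immediate from (\ref{eq6.2}) and the displayed description of the target. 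For the homomorphism property one uses $[s,\theta_s(\mathcal{F})][t,\mathcal{F}] = [st,\mathcal{F}]$ on both sides: the products $\{(\mu,g,\nu)\}\{(\xi,h,\eta)\}$ and $(\tau^{(\mu,g)}\sigma^{(\nu,e_G)})(\tau^{(\xi,h)}\sigma^{(\eta,e_G)})$ are both indexed by $\Lam(\nu,\xi)$, and for a germ at a fixed boundary path only the unique term compatible with that path matters, so the two germs coincide; inverses are handled the same way, and the induced map on unit spaces is the identity of $\partial \La$. Finally, the map carries the basic compact open bisection $\Theta(\{(\mu,g,\nu)\},Z(\nu))$ onto $\Theta(\tau^{(\mu,g)}\sigma^{(\nu,e_G)},Z(\nu))$, so it is a homeomorphism. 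The main obstacle is the computation of the second paragraph: one has to set up the action of $\mathcal{S}(\La \rtimes^{\varphi} G)$ on its tight spectrum and translate the Zappa-Sz$\acute{e}$p category structure into boundary-path language using the cocycle identities of Definition \ref{defn2.3}; with that identification of partial homeomorphisms in hand, everything else is bookkeeping.
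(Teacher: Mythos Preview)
Your proposal is correct and follows essentially the same route as the paper. The only notable difference is in the injectivity step: the paper exploits that elements of $\mathcal{S}(\La \rtimes^{\varphi} G)$ are literal partial bijections of the category and evaluates both sides of equation~(\ref{eq6.3}) at a sufficiently long initial segment $\lambda$ of $\nu x$, whereas you phrase the same computation as multiplication by the idempotent $\iota_{\nu\beta}$ on both sides and compare restrictions. These are dual descriptions of the same calculation. One small imprecision: the paper identifies $\es$ with $\mathcal{E}(\mathcal{T}(\La\rtimes^\varphi G))$, not with $\mathcal{E}(\mathcal{S}(\La\rtimes^\varphi G))$; but since the singleton idempotents $\iota_\gamma\leftrightarrow \tau^{(\gamma,e_G)}\sigma^{(\gamma,e_G)}$ lie in both and suffice to witness germ equality, this does not affect your argument.
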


\begin{proof}
It is straightforward to see that the map is a surjective groupoid homomorphism. For the injectivity, suppose that
$$(\tau^{(\mu,g)}\sigma^{(\nu,e_G)};\nu x)\sim (\tau^{(\alpha,h)}\sigma^{(\beta,e_G)};\beta y).$$
Then $\nu x=\beta y$ and there exists an idempotent $f=\tau^{(\gamma,e_G)}\sigma^{(\gamma,e_G)}\in \mathcal{E}(\mathcal{S}(\La \rtimes^{\varphi} G))$ such that
\begin{equation}\label{eq6.3}
\tau^{(\mu,g)}\sigma^{(\nu,e_G)} f= \tau^{(\alpha,h)}\sigma^{(\beta,e_G)} f.
\end{equation}
Note that in this case $\gamma$ must be an initial segment of $\nu x=\beta y$. Let $\lambda:=(\nu x)(0, d(\nu) \vee d(\beta) \vee d(\gamma)$. Then by (\ref{eq6.3}),
$$\tau^{(\mu,g)}\sigma^{(\nu,e_G)} f (\lambda)=\tau^{(\mu,g)}\sigma^{(\nu,e_G)} (\lambda) =\mu\big(g \cdot \lambda(d(\nu),d(\lambda))\big)$$
is equal to
$$\tau^{(\alpha,h)}\sigma^{(\beta,e_G)} f (\lambda)=\tau^{(\alpha,h)}\sigma^{(\beta,e_G)} (\lambda)= \alpha \big(h\cdot \lambda(d(\beta),d(\lambda)) \big).$$
This says that
$$\{(\mu,g,\nu)\} \{(\lambda,e_G,\lambda)\}=\{(\alpha,h,\beta)\} \{(\lambda,e_G,\lambda)\}$$
as elements of $\sg$, and since $\lambda$ is an initial segment of $\nu x=\beta y$, we obtain that
$$[\{(\mu,g,\nu)\};\nu x]= [\{(\alpha,h,\beta)\};\beta y]$$
in $\gt$. Consequently, the map is injective.

To complete the proof, we should only notice that the map induces a one-to-one correspondence between open basic sets $[\{(\mu,g,\nu)\};Z(\nu \gamma)]$ in $\gt$ and the ones $[\tau^{(\mu,g)}\sigma^{(\nu,e_G)};Z(\nu \gamma)]$ in $\mathcal{G}_{\mathrm{tight}}(\mathcal{S}(\La \rtimes^{\varphi} G))$, and therefore it is a homeomorphism.
\end{proof}

In particular, combining Proposition \ref{prop6.5} with \cite[Proposition 5.2]{ort20} implies that $\gt$ is isomorphic to the Spielberg's groupoid $G|_{\partial (\La\rtimes^{\varphi} G)}$ of \cite{spi12,spi20}.

\section{Linking to \cite{li21,li21-ideal}}\label{sec7}

In this section, we assume $(G,\La)$ is a pseudo free self-similar $k$-graph with a row-finite $\La$ without sources. Li and Yang in \cite{li21,li21-ideal} introduced an ample groupoid $\mathcal{G}_{G,\La}$ such that $\og\cong C^*(\mathcal{G}_{G,\La})$ provided $G$ is amenable. We are going to prove that our groupoid of germs $\gt$ is isomorphic to $\mathcal{G}_{G,\La}$ in this case. Let us first recall the definition of $\mathcal{G}_{G,\La}$ from \cite{li21,li21-ideal} for convenience.

Let $C(\N^k,G)$ be the group of maps $f:\N^k \rightarrow G$ with the pointwise multiplication. Given $f,g\in C(\N^k,G)$, define the equivalence relation $f\sim g$ whenever there exists $n_0\in \N^k$ such that $f(n)=g(n)$ for all $n\geq n_0$. Define $Q(\N^k,G):=C(\N^k,G)/\sim$. Also, for each $z\in\Z^k$, if $\mathcal{T}_z: C(\N^k,G)\rightarrow C(\N^k,G)$ is the automorphism defined by
$$\mathcal{T}_z(f)(n)=\left\{
    \begin{array}{ll}
      f(n-z) & n-z\geq 0 \\
      e_G & \mathrm{otherwise}
    \end{array}
  \right. \hspace{5mm} (f\in C(\N^k,G),~ n\in \N^k),
$$
then $\mathcal{T}_z$ induces an automorphism, denoted again by $\mathcal{T}_z$, on $Q(\N^k,G)$ such that $\mathcal{T}_z([f])=[\mathcal{T}_z(f)]$. So, $\mathcal{T}:\Z^k\rightarrow \mathrm{Aut}Q(\N^k,G)$ is a homomorphism and one may consider the semidirect product group $Q(\N^k,G)\rtimes_{\mathcal{T}} \Z^k$.

\begin{defn}
Let $(G,\La)$ be a row-finite source-free self-similar $k$-graph. One may define $\mathcal{G}_{G,\La}$ as the subgroupoid
$$\g:=\left\{\big(\mu(g\cdot x);\mathcal{T}_{d(\mu)}([\varphi(g,x)]),d(\mu)-d(\nu);\nu x\big) :g\in G, ~\mu,\nu\in \La,~ s(\mu)=g\cdot s(\nu) \right\}$$
of $\La^\infty\times \big(Q(\N^k,G)\rtimes_{\mathcal{T}} \Z^k\big) \times \La^\infty$ with the range and source maps
$$r(x;[f],n-m;y)=x  \hspace{5mm} \mathrm{and} \hspace{5mm} s(x;[f],n-m;y)=y.$$
If we set
$$Z(\mu,g,\nu):=\left\{\big(\mu(g\cdot x);\mathcal{T}_{d(\mu)}([\varphi(g,x)]),d(\mu)-d(\nu);\nu x\big) :x\in s(\nu)\La^\infty \right\},$$
then $\mathcal{G}_{G,\La}$ endowed with the topology generated by
$$\mathcal{B}_{G,\La}:=\left\{Z(\mu,g,\nu):\mu,\nu\in \La,g\in G,s(\mu)=g\cdot s(\nu)\right\}$$
is a topological groupoid. In case $(G,\La)$ is pseudo free, \cite[Proposition 3.11]{li21-ideal} proves that $\g$ is a Hausdorff ample groupoid with the compact open basis $\mathcal{B}_{G,\La}$.
\end{defn}

\begin{prop}\label{prop7.2}
Let $(G,\La)$ be a self-similar $k$-graph over a row-finite source-free $k$-graph $\La$. Suppose moreover that $(G,\La)$ is pseudo free. Then $\psi:\gt \rightarrow \mathcal{G}_{G,\La}$ defined by
\begin{equation}\label{eq7.1}
[F,\nu x]\longmapsto \big(\mu(g\cdot x);\mathcal{T}_{d(\mu)}([\varphi(g,x)]), d(\mu)-d(\nu);\nu x \big),
\end{equation}
where $(\mu,g,\nu)$ is the unique triple in $F$ with $\nu$ as the third part, is an isomorphism from $\gt$ onto $\mathcal{G}_{G,\La}$ as topological groupoids.
\end{prop}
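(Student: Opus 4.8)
The plan is to work throughout with the concrete pictures of both groupoids. Since $\La$ is row-finite and source-free, $\partial\La=\La^{\infty}$, so Proposition~\ref{prop6.4} identifies $\gt^{(0)}$ with $\La^{\infty}=\mathcal{G}_{G,\La}^{(0)}$, and under this identification $\psi$ is the identity on units. Comparing the description (\ref{eq6.2}) of $\gt$ with the sets $Z(\mu,g,\nu)$ spanning $\mathcal{G}_{G,\La}$ shows that $\psi$ is essentially ``the identity on parameters'', so surjectivity is immediate; the content is that $\psi$ is a well-defined bijective morphism of topological groupoids.

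I would treat well-definedness and injectivity together. Suppose $[\{(\mu,g,\nu)\},\nu x]=[\{(\mu',g',\nu')\},\nu'x']$ in $\gt$; then $w:=\nu x=\nu'x'$, and since $\{Z(w(0,p))\}_{p\in\N^{k}}$ is a neighbourhood basis of $w$ in $\partial\La$, the germ equality can be witnessed by an idempotent $\iota_{w(0,p)}$ with $p$ large. A direct use of (\ref{eq3.1}) gives $\{(\mu,g,\nu)\}\iota_{w(0,p)}=\{(\mu(g\cdot\beta),\varphi(g,\beta),w(0,p))\}$ with $\beta=w(d(\nu),p)$, and similarly for the primed triple, so comparing singletons yields $\mu(g\cdot\beta)=\mu'(g'\cdot\beta')$ and $\varphi(g,\beta)=\varphi(g',\beta')$, where $\beta'=w(d(\nu'),p)$. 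A degree count then gives $d(\mu)-d(\nu)=d(\mu')-d(\nu')$; feeding this back through the self-similar equation (2)(b) of Definition~\ref{defn2.3} gives equality of the $\La^{\infty}$-coordinates of $\psi$, and the cocycle identities (2)(a),(c),(d) together with the shift formula for $\mathcal{T}_{z}$ give equality of the $Q(\N^{k},G)$-coordinate. Every implication here is reversible — using that two prefixes of $w$ of equal degree coincide — so running the argument backwards proves injectivity.

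Next I would verify that $\psi$ is a groupoid homomorphism. Compatibility with inversion is immediate from $F^{*}=\{(\nu,g^{-1},\mu):(\mu,g,\nu)\in F\}$ and the inverse in $Q(\N^{k},G)\rtimes_{\mathcal{T}}\Z^{k}$. For composable germs $[\{(\mu_{1},g_{1},\nu_{1})\},\nu_{1}x_{1}]$ and $[\{(\mu_{2},g_{2},\nu_{2})\},\nu_{2}x_{2}]$ (composability meaning $\mu_{2}(g_{2}\cdot x_{2})=\nu_{1}x_{1}$), the product in $\gt$ is represented by the single triple of $\{(\mu_{1},g_{1},\nu_{1})\}\{(\mu_{2},g_{2},\nu_{2})\}$ coming from the (unique) pair $(\alpha,\beta)\in\Lam(\nu_{1},\mu_{2})$ with $\nu_{1}\alpha$ a prefix of $\nu_{1}x_{1}$; a coordinatewise computation using (\ref{eq3.1}), (\ref{eq2.1}) and the cocycle identities — in the same spirit as the proof of Lemma~\ref{lem3.3} — shows $\psi$ intertwines this with the product in $\mathcal{G}_{G,\La}$. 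Finally, $\psi$ sends the basic compact open bisection $\Theta(\{(\mu,g,\nu)\},Z(\nu\gamma))$ onto $Z(\mu(g\cdot\gamma),\varphi(g,\gamma),\nu\gamma)$, using $\{(\mu,g,\nu)\}\iota_{\nu\gamma}=\{(\mu(g\cdot\gamma),\varphi(g,\gamma),\nu\gamma)\}$ and the cocycle law (2)(c); hence $\psi$ carries a generating family of compact open bisections of $\gt$ onto the generating family $\mathcal{B}_{G,\La}$, and being bijective it is a homeomorphism.

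The main obstacle is the bookkeeping in the two coordinatewise verifications — checking that the $Q(\N^{k},G)\rtimes_{\mathcal{T}}\Z^{k}$-component is a genuine germ invariant and is multiplicative under $\psi$ — which amounts to careful tracking of $\varphi(g,-)$ along prefixes and shifts of boundary paths and repeated use of the cocycle conditions (a),(c),(d) of Definition~\ref{defn2.3} and the identity (\ref{eq2.1}). The pseudo freeness hypothesis plays no role in this algebraic content; it is used only via \cite[Proposition 3.11]{li21-ideal}, which guarantees that $\mathcal{G}_{G,\La}$ with the topology generated by $\mathcal{B}_{G,\La}$ is a Hausdorff ample groupoid (so that, in particular, $\mathcal{B}_{G,\La}$ really does consist of compact open bisections and the statement has content), while Corollary~\ref{cor5.4} ensures $\gt$ is Hausdorff as well.
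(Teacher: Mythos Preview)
Your proposal is correct and follows essentially the same route as the paper: reduce to singleton representatives via (\ref{eq6.2}), verify well-definedness/injectivity by comparing the triples $\{(\mu,g,\nu)\}\iota_{w(0,p)}$ for large $p$ and tracking the cocycle identities, and finish by matching the basic compact open bisections $\Theta(\{(\mu,g,\nu)\},Z(\nu\gamma))$ with the sets $Z(\mu(g\cdot\gamma),\varphi(g,\gamma),\nu\gamma)$ in $\mathcal{B}_{G,\La}$. The only organisational difference is that you package well-definedness and injectivity together through a single reversibility argument, whereas the paper declares the homomorphism part straightforward and focuses on deriving germ equality from conditions (i)--(iv) by constructing a common refined representative $(\mu,g,\nu)$; the underlying computations (in particular the use of (iv) to force $\varphi(g,\beta)=\varphi(g',\beta')$ for a sufficiently long common prefix) are the same.
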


\begin{proof}
It is straightforward to check that $\psi$ is a surjective groupoid homomorphism. So, given $[F_1,\nu_1 x_1], [F_2,\nu_2 x_2]\in \gt$, we need to prove
\begin{align}\label{eq7.2}
\psi([F_1,\nu_1 x_1])=\psi([F_2,\nu_2 x_2]) ~~ & \Longleftrightarrow ~~ [F_1,\nu_1 x_1]=[F_2,\nu_2 x_2]\\
& \Longleftrightarrow ~~ (F_1,\nu_1 x_1) \sim (F_2,\nu_2 x_2) ~~ \mathrm{in} ~ \sg* \La^\infty. \nonumber
\end{align}

In order to do this, we first assume $\psi([F_1,\nu_1 x_1])=\psi([F_2,\nu_2 x_2])$. By (\ref{eq6.2}), we may assume that $F_1, F_2$ are singletons of the forms $F_1=\{(\mu_1,g_1,\nu_1)\}$ and $F_2=\{(\mu_2,g_2,\nu_2)\}$. Then by (\ref{eq7.1}) the equality $\psi([F_1,\nu_1 x_1])=\psi([F_2,\nu_2 x_2])$ means
\begin{enumerate}[(i)]
  \item $\mu_1(g_1\cdot x_1)=\mu_2 (g_2\cdot x_2)$,
  \item $\nu_1 x_1=\nu_2 x_2$,
  \item $d(\mu_1)-d(\nu_1)=d(\mu_2)-d(\nu_2)$, and
  \item $\mathcal{T}_{d(\mu_1)}([\varphi(g_1,x_1)])=\mathcal{T}_{d(\mu_2)}([\varphi(g_2,x_2)])$.
\end{enumerate}
Fix sufficiently large $n\in \mathbb{N}^k$. If define $n^j:=n- d(\nu_j)$ and $\la_j:=x_j(0,n^j)$ for $j=1,2$, then (ii) implies
\begin{equation}\label{eq7.3}
\nu_1 \la_1=\nu_2 \la_2.
\end{equation}
We also claim that $\mu_1(g_1\cdot \la_1)=\mu_2(g_2\cdot \la_2)$. To see this claim, using (i) above, it suffices to have $d(\mu_1(g_1\cdot \la_1))=d(\mu_2(g_2\cdot \la_2))$, or equivalently
$$d(\mu_1)+d(\la_1)=d(\mu_2)+d(\la_2).$$
This follows simply from (iii) and that
\begin{align*}
d(\la_1)-d(\la_2)&=d(\nu_2)-d(\nu_1)  \hspace{10mm} (\mathrm{by}~ (\ref{eq7.3}))\\
&=d(\mu_2)-d(\mu_1) \hspace{10mm} (\mathrm{by}~ \mathrm{(iii)}).
\end{align*}

Furthermore, (iv) says that there exists $n_0\in \mathbb{N}^k$ such that
$$\varphi(g_1,x_1)(n-d(\mu_1))=\varphi(g_2,x_2)(n-d(\mu_2)) \hspace{10mm} (\forall n\geq n_0),$$
which means
$$\varphi(g_1,x_1(0,n-d(\mu_1))=\varphi(g_2,x_2(0,n-d(\mu_2))$$
for all $n\geq n_0$. In particular, since $d(\la_1)-d(\la_2)=d(\mu_2)-d(\mu_1)$, it follows $\varphi(g_1,\la_1)=\varphi(g_2,\la_2)$. Now define $\nu:=\nu_1\la_1=\nu_2 \la_2$, $\mu:=\mu_1(g_1\cdot \la_1)=\mu_2(g_2\cdot \la_2)$, $g:=\varphi(g_1,\la_1)=\varphi(g_2,\la_2)$ and $x:=\sigma^{d(\la_1)}(x_1)=\sigma^{d(\la_2)}(x_2)$. Then
\begin{align*}
\{(\mu_1,g_1,\nu_1)\}\{(\nu,e_G,\nu)\}& =\left\{\big(\mu_1(g_1\cdot \la_1),\varphi(g_1,\la_1),\nu\big)\right\}  \hspace{5mm} (\mathrm{by ~ Definition ~} \ref{defn3.1})\\
&=\{(\mu,g,\nu)\}\\
&=\{(\mu,g,\nu)\}\{(\nu,e_G,\nu)\},
\end{align*}
and hence
$$\big(\{(\mu_1,g_1,\nu_1)\},\nu_1 x_1\big) \sim \big(\{(\mu,g,\nu)\},\nu x\big)$$
in $\sg*\La^\infty$. Similarly, we can show $\big(\{(\mu_2,g_2,\nu_2)\},\nu_2 x_2\big) \sim \big(\{(\mu,g,\nu)\},\nu x\big)$ as well, and therefore $\big[\{(\mu_1,g_1,\nu_1)\},\nu_1 x_1\big] = \big[\{(\mu_2,g_2,\nu_2)\},\nu_2 x_2\big]$ proving the ``if" part of (\ref{eq7.2}). The converse implication of (\ref{eq7.2}) is analogous and left to the reader.

It remains to show that $\psi$ is an open and continuous map. Recall that the open sets $[\{(\mu,e_G,\nu)\}, Z(\nu)]$ form a basis for the topology on $\gt$. Since
\begin{align*}
\psi([\{(\mu,g,\nu)\},Z(\nu)])&=\left\{\big(\mu (g\cdot x);\mathcal{T}_{d(\mu)}([\varphi(g,x)]), d(\mu)-d(\nu); \nu x\big): ~ x\in s(\nu)\La^\infty \right\}\\
&=Z(\mu,g,\nu),
\end{align*}
we see that $\psi$ maps precisely the basic open sets in $\gt$ onto the basic open sets in $\mathcal{G}_{G,\La}$, and therefore $\psi$ is a homeomorphism. This completes the proof.
\end{proof}


\section{Minimality of $\gt$}\label{sec8}

In this section, we show that the minimality of $\gt$ is completely related to the notion of $G$-cofinality for $\La$.

\begin{defn}
Let $\mathcal{G}$ be a groupoid. A subset $U\subseteq \mathcal{G}^{(0)}$ is called \emph{invariant} if for every $\alpha\in \mathcal{G}$ we have $s(\alpha)\in U ~~\Longleftrightarrow ~~ r(\alpha)\in U$. We say $\mathcal{G}$ is a \emph{minimal groupoid} if the only invariant open subsets of $\mathcal{G}^{(0)}$ are the empty set and $\mathcal{G}^{(0)}$ itself.
\end{defn}

\begin{defn}\label{defn8.2}
Let $(G,\La)$ be a self-similar $k$-graph. We say that $(G,\La)$ is \emph{$G$-cofinal} if for every $v\in \La^0$ and $x\in \partial\La$, there are $n\leq d(x)$ and $g\in G$ such that $v\La(g\cdot x(n))\neq \emptyset$.
\end{defn}

The main result of this section is:

\begin{thm}\label{thm8.3}
Let $(G,\La)$ be a self-similar $k$-graph. Then the following are equivalent.
\begin{enumerate}[(1)]
  \item The action $\theta:\sg \curvearrowright \partial\La$ is irreducible (in the sense of \cite[Definition 5.1(3)]{exe16}).
  \item The groupoid $\gt$ is minimal.
  \item $(G,\La)$ is $G$-cofinal.
\end{enumerate}
\end{thm}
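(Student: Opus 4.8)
The plan is to prove the cycle of implications $(1)\Rightarrow(2)\Rightarrow(3)\Rightarrow(1)$, using the identification $\widehat{\mathcal{E}}_{\mathrm{tight}}(\sg)\cong\partial\La$ from Proposition \ref{prop6.4} and the explicit description of $\gt$ in \eqref{eq6.2} throughout. The equivalence $(1)\Leftrightarrow(2)$ is essentially the general principle of \cite{exe16} relating irreducibility of the action $\sg\curvearrowright\widehat{\mathcal{E}}_{\mathrm{tight}}(\sg)$ to minimality of the groupoid of germs: an open subset $U\subseteq\partial\La$ is invariant for $\gt$ precisely when it is invariant under all the partial homeomorphisms $\theta_F$, which is exactly the condition in \cite[Definition 5.1(3)]{exe16}. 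So I would dispatch $(1)\Leftrightarrow(2)$ quickly by quoting the relevant statement in \cite{exe16}, and concentrate the real work on translating either of these into the graphical condition $(3)$.

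For $(2)\Rightarrow(3)$: assume $\gt$ is minimal; I want to show $G$-cofinality. Fix $v\in\La^0$ and $x\in\partial\La$. The unit space point $x$ lies in $\partial\La=\gt^{(0)}$, and I would consider the open invariant set generated by the orbit of $v$ — more concretely, the saturation under $\gt$ of the basic open set $Z(v)=v\partial\La$. Because $\gt$ is minimal, this saturation must be all of $\partial\La$, so in particular it contains $x$. Unwinding what it means for $x$ to be in the $\gt$-saturation of $Z(v)$ via the description \eqref{eq6.2}: there is a germ $[\{(\mu,g,\nu)\},\nu y]$ with source an element of $Z(v)$ and range a point with $x$ as an initial-segment/shift configuration; chasing degrees and the self-similar equation in Definition \ref{defn2.3}(2)(b), this produces $n\le d(x)$ and $g\in G$ with $v\La\big(g\cdot x(n)\big)\neq\emptyset$, which is exactly Definition \ref{defn8.2}. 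I expect the bookkeeping here — matching a germ touching $Z(v)$ with a shift of $x$ — to be the fiddliest part, though it is essentially the same style of degree-chasing done in the proof of Proposition \ref{prop7.2}.

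For $(3)\Rightarrow(1)$: assume $(G,\La)$ is $G$-cofinal; I want irreducibility of $\theta:\sg\curvearrowright\partial\La$. By the criterion of \cite[Definition 5.1(3)]{exe16} (or its reformulation in terms of idempotents), it suffices to show that for every nonzero $E\in\es$ and every $\iota_v$ with $v\in\La^0$, there is a chain of elements of $\sg$ moving a nonzero piece of $E$ into $\iota_v$ — concretely, that for every boundary path $x$ in the cylinder determined by $E$, some translate $\theta_F$ carries a neighborhood of $x$ into $Z(v)$. Since $E$ contains some $(\mu,e_G,\mu)$, the relevant $x$ has $\mu$ as an initial segment; apply $G$-cofinality to the pair $v$ and $\sigma^{d(\mu)}(x)$ (or to $v$ and $x$ directly after noting $r(x)$) to obtain $n$ and $g$ with $v\La\big(g\cdot x(n)\big)\neq\emptyset$, say $\lambda\in v\La\big(g\cdot x(n)\big)$. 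Then the element $F=\{(\lambda,\varphi(g,x(0,n)),g^{-1}\cdot x(0,n)\cdot\text{(appropriate))}\}$, suitably massaged so that $s(\cdot)$-conditions hold, maps a basic neighborhood of $x$ into $Z(v)$; this witnesses that $\iota_v$ meets the orbit of $E$, giving irreducibility. The main obstacle across the whole proof is making sure the triples one writes down genuinely lie in $\sg$ (the constraint $s(\mu)=g\cdot s(\nu)$ and pairwise $(\perp)$) and that the germ equivalence $\sim$ is respected; these are routine given Definition \ref{defn3.1} and the cocycle identities, but need care, so I would isolate a small lemma producing, for given $v,x$ with $v\La(g\cdot x(n))\neq\emptyset$, an explicit element $F\in\sg$ with $x\in D^{F^*F}$ and $\theta_F(x)\in Z(v)$, and then feed that lemma into both $(3)\Rightarrow(1)$ and the converse direction.
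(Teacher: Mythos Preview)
Your cycle $(1)\Leftrightarrow(2)$ via \cite{exe16} plus $(2)\Rightarrow(3)\Rightarrow(1)$ worked out on the boundary-path model is correct in outline, and the orbit argument for $(2)\Rightarrow(3)$ goes through cleanly once you fix the orientation: if $x$ lies in the $\gt$-saturation of $Z(v)$, then using \eqref{eq6.2} there is $[\{(\mu,g,\nu)\},\nu y]$ with $\nu y\in Z(v)$ and $\mu(g\cdot y)=x$, so $\nu\in v\La(g^{-1}\cdot x(d(\mu)))$. For $(3)\Rightarrow(1)$ your sketched element $F$ is garbled; the right choice is simply $F=\{(\lambda,g,x(0,n))\}$ for any $\lambda\in v\La(g\cdot x(n))$, which already satisfies $s(\lambda)=g\cdot s(x(0,n))$ with no massaging, and gives $\theta_F(x)=\lambda(g\cdot\sigma^n(x))\in Z(v)$. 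That shows every $\gt$-orbit meets every $Z(v)$, hence (composing with $\{(\gamma,e_G,s(\gamma))\}$) every $Z(\gamma)$, so every orbit is dense.

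This is a genuinely different route from the paper. The paper does not argue pointwise on $\partial\La$ at all: it isolates Proposition~\ref{prop8.4}, which translates $G$-cofinality into the purely semilattice statement that for every $E,F\in\es$ there exist $F_1,\ldots,F_l\in\sg$ with $\{F_iEF_i^*\}$ an outer cover for $F$, and then invokes \cite[Theorem~5.5]{exe16}, which says precisely that this outer-cover condition characterizes irreducibility and minimality. The paper's proof of Proposition~\ref{prop8.4} uses compactness of $Z(\gamma)$ to extract a finite cover from a $G$-cofinality-produced family of cylinders, which is where the real work sits. Your approach trades that compactness argument for a direct orbit-density argument; it is shorter and more geometric, but it leans on the identification $\widehat{\mathcal{E}}_{\mathrm{tight}}(\sg)\cong\partial\La$ and on knowing that dense orbits is equivalent to minimality for ample groupoids, whereas the paper's route stays inside the inverse-semigroup formalism of \cite{exe16} and yields the reusable Proposition~\ref{prop8.4} as a by-product.
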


To prove the above theorem, we only need to get the following proposition.

\begin{prop}\label{prop8.4}
A self-similar $k$-graph $(G,\La)$ is $G$-cofinal if and only if for every $E,F\in \es$, there are finitely many elements $F_1,\ldots, F_l\in \sg$ such that $\{F_i EF_i^*\}_{1\leq i\leq l}$ is an outer cover for $F$ (Definition \ref{defn4.1}).
\end{prop}

\begin{proof}
Let us first assume $(G,\La)$ is $G$-cofinal and prove the ``only if" part. Note that for $(\mu,g,\nu),(\xi,h,\eta)\in F_i$ and $\iota_\gamma \in E$, if
$$\{(\mu,g,\nu)\}\iota_\gamma \{(\xi,h,\eta)\}^*\neq 0,$$
then we must have $\Lam(\nu,\gamma)\neq \emptyset$ and $\Lam(\gamma,\eta)\neq \emptyset$. Hence $\Lam(\nu,\eta)\neq \emptyset$, which yields $(\mu,g,\nu)=(\xi,h,\eta)$ by the ($\perp$) property. Thus
\begin{align*}
F_iEF_i&= \bigcup_{(\mu,g,\nu)\in F_i}\{(\mu,g,\nu)\} ~ E ~ \{(\mu,g,\nu)\}^*\\
&= \bigcup_{(\mu,g,\nu)\in F_i} \bigcup_{(\la,e_G,\la)\in E} \{(\mu,g,\nu)\} \iota_\la \{(\nu,g^{-1},\mu)\}.
\end{align*}
Moreover, a set $\mathcal{C}\subseteq \es$ is an outer cover for $F$ if and only if it is an outer cover for each singleton $\iota_\gamma$ with $(\gamma,e_G,\gamma)\in F$. Therefore, without loss of generality, it suffices to prove the statement only for singletons $E=\iota_\la$ and $F=\iota_\gamma$ in $\es$ and try to find $\{(\mu_i,g_i,\nu_i)\}_{i=1}^l \subseteq \sg$ such that $\{\{(\mu_i,g_i,\nu_i)\} \iota_\la \{(\mu_i,g_i,\nu_i)\}^*\}_{1\leq i\leq l}$ is an outer cover for $\iota_\gamma$.

So fix $\iota_\la,\iota_\gamma\in \es$. For each $x\in Z(\gamma)$, the $G$-cofinality says that there is an $n_x\leq d(x)$ such that $s(\la)\La (G\cdot x(n_x))\neq \emptyset$. Here, we may enlarge $n_x$ sufficiently to have $d(\gamma)\leq n_x$. So, since $Z(\gamma)$ is compact and covered as
$$Z(\gamma)\subseteq \bigcup_{x\in Z(\gamma)} Z(x(0,n_x))$$
by open cylinders $Z(x(0,n_x))$, there exists a finite set $B\subseteq Z(\gamma)$ such that
\begin{equation}\label{eq8.1}
Z(\gamma)\subseteq \bigcup_{x\in B} Z(x(0,n_x)).
\end{equation}

We claim that $\{\iota_{x(0,n_x)}\}_{x\in B}$ is a cover for $\iota_\gamma$. To see this, note that for any idempotent $\iota_\eta$ with $\iota_\eta\leq \iota_\gamma$ we have $\iota_\eta \iota_\gamma=\iota_\eta$, so $\eta=\gamma \tau$ for some $\tau\in \La$. Hence, we may pick  $y\in Z(\gamma)$ such that $y(0,d(\eta))=\eta$  (for example $y:=\eta z$ for some $z\in s(\eta)\partial \La$). By (\ref{eq8.1}), there exists $x\in B$ such that $y\in Z(x(0,n_x))$. Thus, we have $x(0,n_x)=y(0,n_x)$, and $\eta=y(0,d(\eta))$ implies $\Lam(x(0,n_x),\eta)\neq \emptyset$. Consequently, $\iota_{x(0,n_x)}\Cap \iota_\eta$, and the claim holds.

Furthermore, for every $x\in B$, the $G$-cofinal property gives $\tau_x \in s(\la) \La (G \cdot x(n_x))$ and $g_x\in G$ such that $s(\tau_x)=g_x\cdot x(n_x)$. If define
$$s_x:=(x(0,n_x),g_x,\la\tau_x)\in \sg \hspace{5mm} (\forall x\in B),$$
then
$$s_x\iota_\la s_x^*=\iota_{x(0,n_x)},$$
and the above claim concludes that $\{s_x \iota_\la s_x^*\}_{x\in B}$ is a finite cover for $\iota_\gamma$ as desired.

To prove the ``if" part, we fix $x\in \partial\La$ and $v\in \La^0$. By hypothesis, there are $F_1,\ldots,F_l\in \sg$ such that $\{F_i\iota_v F_i^*\}_{1\leq i\leq l}$ is an outer cover for $\iota_{r(x)}$. Observe again that for $(\mu,g,\nu),(\xi,h,\eta)\in F_i$, if
$$\{(\mu,g,\nu)\} \iota_v \{(\xi,h,\eta)\}^*=\{(\mu,g,\nu)\}\{(\eta,h^{-1},\xi)\}\neq 0,$$
then (\ref{eq3.1}) implies $\Lam(\nu,\eta)\neq \emptyset$, so $(\mu,g,\nu)=(\xi,h,\eta)$ by the ($\perp$) property. Also, for each $(\mu,g,\nu)\in F_i$, one can compute
\begin{align}\label{eq8.2}
\{(\mu,g,\nu)\} \iota_v \{(\mu,g,\nu)\}^*&=\{(\mu,g,\nu)\}\{ (v,e_G,v)\}\{(\nu,g^{-1},\mu)\}\\
&=\left\{
 \begin{array}{ll}
 \iota_\mu & \mathrm{if} ~ r(\nu)=v \\
 0 & \mathrm{otherwise}.
 \end{array}
 \right. \nonumber
\end{align}
Writing $\bigcup_{i=1}^l F_i=\{(\mu_j,g_j,\nu_j): 1\leq j\leq l'\}$ and $s_j:=\{(\mu_j,g_j,\nu_j)\}$ for $1\leq j\leq l'$, it follows that $\{s_j \iota_v s_j^*\}_{1\leq j\leq l'}$ is an outer cover for $\iota_{r(x)}$.

We now want to find a path $\mu_{j_0}$ in $\{\mu_j: 1\leq j\leq l'\}$ such that $x\in \mu_{j_0} \partial\La$. To do this, let $n:=\bigvee_{j=1}^{l'} d(\mu_j)$. Since $\iota_{x(0,n)}\leq \iota_{r(x)}$, there exists an element $s_{j_0} \iota_v s_{j_0}^*$ $(=\iota_{\mu_{j_0}})$ such that $\iota_{\mu_{j_0}}\Cap \iota_{x(0,n)}$. Thus $\Lam(\mu_{j_0},x(0,n))\neq \emptyset$, and we deduce $x(0,n)=\mu_{j_0}\tau$ for some $\tau\in \La$ because $d(\mu_{j_0})\leq n=d(x(0,n))$. Consequently, $x\in \mu_{j_0} \partial \La$ as desired.

On the other hand, by (\ref{eq8.2}), we have $r(\nu_{j_0})=v$ and
$$s(\nu_{j_0})=g_{j_0}^{-1}\cdot s(\mu_{j_0})=g_{j_0}^{-1}\cdot x(d(\mu_{j_0})),$$
meaning that $\nu_{j_0}\in v\La (g_{j_0}^{-1}\cdot x(d(\mu_{j_0})))$. This concludes that $(G,\La)$ is $G$-cofinal.
\end{proof}

\begin{proof}[Proof of Theorem \ref{thm8.3}.]
The statement follows from Proposition \ref{prop8.4} and \cite[Theorem 5.5]{exe16}.
\end{proof}


\section{Effective property of $\gt$}\label{sec9}

The notion of topological freeness is a generalization of that for group actions, which is studied in \cite[Section 4]{exe16}. It is proved in \cite[Theorem 4.7]{exe16} that an inverse semigroup action $S\curvearrowright X$ is topologically free if and only if the corresponding groupoid of germs is effective. Also, for the standard action $\mathcal{S}_{G,E}\curvearrowright E^\infty$ associated to a self-similar graph $(G,E)$, there is a characterization of this result in \cite[Theorem 14.10]{exe17} via the properties of $(G,E)$. In this section we give an aperiodicity condition for $(G,\La)$ under which the groupoid $\gt$ is effective (or equivalently, the action $\theta:\sg\curvearrowright \partial \La$ is topological free).

Recall that a topological groupoid $\mathcal{G}$ is said to be \emph{effective}\footnote{In the literature, the term \emph{essentially principal} is also used for this property (in \cite{ren80,exe16} for example).} if the interior of isotropy group bundle $\{\alpha\in \mathcal{G}:s(\alpha)=r(\alpha)\}$ in $\mathcal{G}$ is just $\mathcal{G}^{(0)}$. According to \cite[Proposition 3.1]{ren08}, in case $\mathcal{G}$ is second countable and Hausdorff, and $\mathcal{G}^{(0)}$ has the Baire property, then $\mathcal{G}$ is effective if and only if the units $x$ in $\mathcal{G}^{(0)}$ with trivial isotropy (meaning that $\{\alpha\in \mathcal{G}:r(\alpha)=s(\alpha)=x\}=\{x\}$) is dense in $\mathcal{G}^{(0)}$.

Furthermore, the adoption of \cite[Definition 4.1]{exe16} for the action $\theta:\sg \curvearrowright \partial \La$ is the following:

\begin{defn}
Let $(G,\La)$ be a self-similar $k$-graph and consider the action $\theta:\sg \curvearrowright \partial \La$ defined in the end of Section 6.
\begin{enumerate}[(1)]
\item If $F\in \sg$ and $x\in D^{F^* F}$ such that $\theta_F(x)=x$, we say $x$ is a \emph{fixed point for $F$}.
\item In case there is $E\in \es$ with $E\leq F$ and $x\in D^E$, $x$ is called a \emph{trivial fixed point} for $F$.
\item We say that $\theta$ is a \emph{topologically free} action if for every $F\in \sg$, the interior of the fixed point set for $F$ contains only trivial fixed points.
\end{enumerate}
\end{defn}

We now define an aperiodic condition for self-similar $k$-graphs $(G,\La)$, which implies the topological freeness of $\theta:\sg\curvearrowright \partial \La$ (and hence $\gt$ is effective). Note that our condition generalizes Condition (A) of \cite[Theorem 7.1]{far05} to self-similar $k$-graphs, which is weaker than that in \cite[Definition 6.4]{li21}.

\begin{defn}[An aperiodicity condition]\label{defn9.2}
Let $(G,\La)$ be a self-similar $k$-graph as in Definition \ref{defn2.3}. We say $(G,\La)$ satisfies \emph{G-aperiodic condition (A)} if for every $v\in \La^0$, there exists $x\in v\partial \La$ such that $\sigma^m(x)=g\cdot\sigma^n(x)$ implies $m=n$ for all $m,n\leq d(x)$ and $g\in G$.
\end{defn}

Let us prove some lemmas before the main result of this section, Theorem \ref{thm9.6}.

\begin{lem}\label{lem9.4}
Let $g\in G$ and $x\in \partial\La$. Then
$$\sigma^n(g\cdot x)=\varphi(g,x(0,n))\cdot \sigma^n (x)$$
for every $n\leq d(x)$.
\end{lem}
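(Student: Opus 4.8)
The plan is to reduce everything to the self-similar equation on \emph{finite} paths and then invoke the unique factorisation property. Recall that the $G$-action on $\partial\Lambda$ is determined on initial segments by $(g\cdot x)(0,p)=g\cdot x(0,p)$ for $p\le d(x)$, and that each $\psi_g$ is a degree-preserving automorphism of $\Lambda$, so $d(g\cdot x)=d(x)$; in particular $\sigma^n(g\cdot x)$ is defined precisely when $\sigma^n(x)$ is, and likewise $d\big(\varphi(g,x(0,n))\cdot\sigma^n(x)\big)=d(\sigma^n(x))=d(x)-n$, so both sides of the asserted identity are boundary paths of the same degree.

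Now fix $n\in\N^k$ with $n\le d(x)$. The essential step is that for every $p\le d(x)-n$ the factorisation property gives $x(0,n+p)=x(0,n)\,x(n,n+p)$ in $\Lambda$, where by definition of the shift $x(n,n+p)=\sigma^n(x)(0,p)$. Applying $(2)(\mathrm{b})$ of Definition~\ref{defn2.3} (the self-similar equation) to the finite paths $x(0,n)$ and $x(n,n+p)$ yields
$$(g\cdot x)(0,n+p)=g\cdot\big(x(0,n)\,x(n,n+p)\big)=\big(g\cdot x(0,n)\big)\big(\varphi(g,x(0,n))\cdot\sigma^n(x)(0,p)\big).$$
On the other hand, again by the factorisation property applied to $g\cdot x$ at degree $n$, we have $(g\cdot x)(0,n+p)=\big(g\cdot x(0,n)\big)\,\sigma^n(g\cdot x)(0,p)$, and $d\big(g\cdot x(0,n)\big)=n$. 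Comparing the two expressions and using the uniqueness in the factorisation property forces
$$\sigma^n(g\cdot x)(0,p)=\varphi(g,x(0,n))\cdot\sigma^n(x)(0,p)\qquad(p\le d(x)-n).$$

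Finally, since a boundary path is determined by its initial segments and both $\sigma^n(g\cdot x)$ and $\varphi(g,x(0,n))\cdot\sigma^n(x)$ have degree $d(x)-n$, the displayed equality for all admissible $p$ gives $\sigma^n(g\cdot x)=\varphi(g,x(0,n))\cdot\sigma^n(x)$, as required. The computation is routine; the only mild bookkeeping point is to keep the $G$-action on boundary paths consistent with its restriction to finite initial segments, but this is immediate from the definition of the action, so no real obstacle arises.
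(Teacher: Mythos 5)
Your proof is correct and follows essentially the same route as the paper: the paper's one-line argument also decomposes $g\cdot x$ at degree $n$, applies the self-similar equation, and invokes unique factorisation. The only difference is that you verify the identity on finite initial segments $(0,n+p)$ and then conclude by noting a boundary path is determined by its initial segments, whereas the paper applies the (implicitly extended) self-similar equation directly to the infinite tail $x(n,d(x))$; this is a matter of bookkeeping, not of substance.
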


\begin{proof}
This follows immediately from the identification
$$g\cdot x=(g\cdot x)(0,n)[(g\cdot x)(n,d(x))]=g\cdot (x(0,n))[\varphi(g,x(0,n))\cdot x(n,d(x))]$$
and the unique factorization of $\La$.
\end{proof}

\begin{lem}\label{lem9.5}
Let $(G,\La)$ satisfy $G$-aperiodic condition (A). If $\theta_s$ has an interior fixed point for $s=\{(\mu,g,\nu)\}\in \sg$, then $\nu=\mu$.
\end{lem}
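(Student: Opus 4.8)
The plan is to exploit the explicit description of the tight groupoid from Section~\ref{sec6} together with the $G$-aperiodicity hypothesis applied at the vertex $v=r(\nu)=r(\mu)$. First I would unwind what it means for $\theta_s$ to have an interior fixed point when $s=\{(\mu,g,\nu)\}$. By the description after Proposition~\ref{prop6.4}, $D^{s^*s}=Z(\nu)$, $D^{ss^*}=Z(\mu)$, and $\theta_s(\nu y)=\mu(g\cdot y)$ for $y\in Z(s(\nu))$. A fixed point is therefore a boundary path $z=\nu y=\mu(g\cdot y)$; in particular $\nu$ and $\mu$ are both initial segments of $z$, so one is an initial segment of the other, say $\mu=\nu\beta$ (the case $\nu=\mu\beta$ is symmetric, using $s^*$ in place of $s$). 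Since the fixed-point set is open, it contains a basic open set $Z(\lambda)$ with $\lambda$ an initial segment of $z$ extending both $\mu$ and $\nu$; so \emph{every} $w\in Z(\lambda)$ satisfies $w=\nu y=\mu(g\cdot y)=\nu\beta(g\cdot y)$ where $y$ is the tail of $w$ after $\nu$.

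Next I would turn this into a shift identity. Writing $p=d(\nu)$ and $q=d(\mu)=p+d(\beta)$, the equality $w=\nu\,\sigma^p(w)=\mu\,(g\cdot\sigma^q(w))$ together with Lemma~\ref{lem9.4} gives, for all $w\in Z(\lambda)$,
$$\sigma^p(w)=\beta\,(g\cdot \sigma^q(w))=\beta\,\bigl(\varphi(g,\,\text{nothing})\cdot\cdots\bigr),$$
and more usefully $\sigma^p(w)$ and $g\cdot\sigma^p(w)$ are related by a clean shift: applying $g$ to $\sigma^p(w)=\beta\,(g\cdot\sigma^q(w))$ and using the self-similar equation, one extracts an identity of the form $\sigma^{m}(x)=h\cdot\sigma^{n}(x)$ with $m\neq n$ for a suitable boundary path $x$ — precisely the configuration forbidden by $G$-aperiodic condition (A). Concretely, I would invoke Definition~\ref{defn9.2} at the vertex $v=r(\lambda)$ to obtain an aperiodic $x\in v\partial\La$; since $Z(\lambda)$ (for $\lambda$ the chosen common extension) is a neighbourhood, I would need $x$ to pass through $\lambda$, so I should instead apply condition~(A) at the vertex $s(\lambda)$ and prepend $\lambda$, or rather observe that aperiodicity is inherited by tails, so from an aperiodic boundary path at $s(\nu)$ we build $w=\lambda\,x'\in Z(\lambda)$ which is still aperiodic. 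Feeding this $w$ into the shift identity derived above forces $d(\beta)=0$, i.e. $\mu=\nu$.

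The main obstacle I anticipate is bookkeeping the group elements correctly when converting ``$\nu y=\mu(g\cdot y)$ for all $y$'' into a single shift-periodicity statement $\sigma^m(x)=h\cdot\sigma^n(x)$: the self-similar equation (Definition~\ref{defn2.3}(b)) and the cocycle identities interleave $g$ with $\varphi(g,\cdot)$, so I must track which group element acts on which tail, and iterate the relation $w=\mu(g\cdot\sigma^{d(\mu)}(w))$ if necessary (applying it twice gives $w=\mu(g\cdot\mu(g\cdot\cdots))$, which is the honest source of periodicity when $\mu=\nu\beta$ with $d(\beta)\neq 0$). Once the periodic relation is isolated at a genuinely aperiodic boundary path lying in the open fixed-point set, the contradiction with condition~(A) is immediate and yields $d(\beta)=0$, hence $\nu=\mu$ by the unique factorisation property. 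The symmetric case $\nu=\mu\beta$ is handled by the same argument applied to $s^*=\{(\nu,g^{-1},\mu)\}$, whose fixed points correspond to those of $s$ under $\theta_s$.
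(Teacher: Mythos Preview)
Your overall strategy---extract a shift relation from the open set of fixed points and contradict $G$-aperiodicity---is exactly the paper's, but there is a genuine gap in your execution. You write that since $\nu$ and $\mu$ are both initial segments of the fixed boundary path $z$, ``one is an initial segment of the other, say $\mu=\nu\beta$''. This is true only for $1$-graphs. For $k\geq 2$ the degrees $d(\mu),d(\nu)$ live in $\mathbb{N}^k$, which is only partially ordered: it is entirely possible that $d(\mu)$ and $d(\nu)$ are incomparable (e.g.\ $d(\mu)=(2,0)$ and $d(\nu)=(0,2)$), so that neither $\mu$ nor $\nu$ extends the other even though both are initial segments of $z$. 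Your entire case-split into $\mu=\nu\beta$ versus $\nu=\mu\beta$, and the subsequent iteration argument, therefore does not get off the ground in the higher-rank setting.

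The paper sidesteps this by never comparing $\mu$ and $\nu$ directly. From $\mu(g\cdot\sigma^{d(\nu)}(w))=w$ it reads off the single relation $\sigma^{d(\mu)}(w)=g\cdot\sigma^{d(\nu)}(w)$, valid for every $w$ in a basic neighbourhood $Z(\nu\lambda)$; this relation makes sense regardless of how $d(\mu)$ and $d(\nu)$ sit relative to one another. One then passes to the tail at $s(\lambda)$ (shifting far enough that both $d(\mu)$ and $d(\nu)$ are absorbed, via Lemma~\ref{lem9.4}) and applies the $G$-aperiodic condition~(A) at $s(\lambda)$ to force $d(\mu)=d(\nu)$, whence $\mu=\nu$ by unique factorisation. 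So your intuition about where aperiodicity should be invoked (at $s(\lambda)$, on a tail) is right; you just need to replace the erroneous ``one is an initial segment of the other'' with the direct shift identity $\sigma^{d(\mu)}(w)=g\cdot\sigma^{d(\nu)}(w)$ and work from there.
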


\begin{proof}
Let $x\in \partial\La$ be an interior fixed point for $\theta_s$, necessarily of the form $x=\nu y$ with $y\in s(\nu)\partial\La$. Since the cylinder set $\{Z(\beta):\beta \in \La\}$ is a basis for the topology on $\partial\La$, there exists $\la\in s(\nu)\La$ such that $x\in Z(\nu\la)$ and boundary paths in $Z(\nu\la)=\nu\la Z(s(\la))$ are all fixed by $\theta_s$. Thus
\begin{align}\label{eq9.1}
\theta_s(y)&=y  \hspace{9.5mm} (\forall y\in Z(\nu\la) \nonumber \\
\Longrightarrow \hspace{5mm}  \theta_{\{(\mu,g,\nu)\}}(\nu\la z)&=\nu\la z \hspace{5mm}  (\forall z\in Z(s(\la))) \nonumber\\
\Longrightarrow  \hspace{11.25mm}  \mu(g\cdot (\la z))&=\nu \la z  \hspace{5mm}  (\forall z\in Z(s(\la)))\\
\Longrightarrow \hspace{11.2mm} \sigma^{d(\mu)}(\nu\la z)&= g\cdot (\la z)= g\cdot\sigma^{d(\nu)}(\nu\la z) \hspace{5mm}  (\forall z\in Z(s(\la))).\nonumber
\end{align}
So, since $d(\la)=d(g\cdot\la)$, Lemma \ref{lem9.4} implies that
$$\sigma^{d(\mu)}(z)=\varphi(g,\gamma)\cdot \sigma^{d(\nu)}(z) \hspace{5mm}  (\forall z\in Z(s(\la)))$$
where $\gamma:=(\la z)(0,d(\nu\la))$. Then, the $G$-aperiodic condition concludes $d(\mu)=d(\nu)$, and therefore $\nu=\mu$ by (\ref{eq9.1}).
\end{proof}

\begin{lem}\label{lem9.6}
Let $(G,\La)$ be a self-similar $k$-graph satisfying the following property: for $v\in \La^0$ and $g\in G$,
\begin{center}
``if $g\cdot x=x$ for all $x\in v\partial \La$, then there is $X\in v\mathbf{FE}(\La)$ such that \\
 every $\tau\in X$ is strongly fixed by $g$."
\end{center}
Let $s=\{(\mu,g,\mu)\}$ be an element in $\sg$. If $x\in Z(\mu)$ is an interior fixed point for $\theta_s$, then there exists $\la\in s(\mu)\La$ such that $x\in Z(\mu\la)$, $g\cdot\la=\la$, and $\varphi(g,\la)=e_G$.
\end{lem}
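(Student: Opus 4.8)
The plan is to promote the ``interior fixed point'' hypothesis to a cylindrical statement, extract a path $\la_{0}$ that is $g$-invariant, and then use the standing hypothesis of the lemma together with the defining property of boundary paths to replace $\la_{0}$ by a longer path that is genuinely strongly fixed by $g$.

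First, since $s=\{(\mu,g,\mu)\}$ we have $D^{s^{*}s}=Z(\mu)$ and $\theta_{s}(\mu w)=\mu(g\cdot w)$ for $w\in Z(s(\mu))$. As $x$ lies in the interior of the fixed-point set of $\theta_{s}$ and the cylinders $\{Z(\beta):\beta\in\La\}$ form a basis for $\partial\La$, I would choose $\beta$ with $x\in Z(\beta)$ and $Z(\beta)$ contained both in $Z(\mu)$ and in the fixed-point set; enlarging $\beta$ to $x(0,d(\beta)\vee d(\mu))$ if necessary, write $\beta=\mu\la_{0}$ with $\la_{0}\in s(\mu)\La$. Then for every $z\in s(\la_{0})\partial\La$ the point $\mu\la_{0}z$ is fixed, hence $\mu\big(g\cdot(\la_{0}z)\big)=\mu\la_{0}z$; cancelling $\mu$ via the unique factorisation property and expanding $g\cdot(\la_{0}z)=(g\cdot\la_{0})\big(\varphi(g,\la_{0})\cdot z\big)$ using $d(g\cdot\la_{0})=d(\la_{0})$, I obtain $g\cdot\la_{0}=\la_{0}$ and $\varphi(g,\la_{0})\cdot z=z$ for all $z\in s(\la_{0})\partial\La$.

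Now set $h_{0}:=\varphi(g,\la_{0})$ and $v_{0}:=s(\la_{0})$. The previous step says $h_{0}$ fixes every boundary path in $v_{0}\partial\La$, so the standing hypothesis of the lemma, applied with the vertex $v_{0}$ and the group element $h_{0}$, yields a set $X\in v_{0}\mathbf{FE}(\La)$ all of whose members are strongly fixed by $h_{0}$. Next consider $w:=\sigma^{d(\mu\la_{0})}(x)\in\partial\La$, which has range $v_{0}$ and satisfies $x=\mu\la_{0}w$. Applying the defining property of boundary paths to $w$ and the finite exhaustive set $X\in r(w)\mathbf{FE}(\La)$ produces $\tau\in X$ with $w(0,d(\tau))=\tau$; in particular $h_{0}\cdot\tau=\tau$ and $\varphi(h_{0},\tau)=e_{G}$.

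Finally I would take $\la:=\la_{0}\tau\in s(\mu)\La$ and verify the three claims. Since $\la=\la_{0}\,w(0,d(\tau))$ is an initial segment of $\la_{0}w$ and $x=\mu\la_{0}w$, we get $x\in Z(\mu\la)$. The self-similar equation together with $g\cdot\la_{0}=\la_{0}$ and $\varphi(g,\la_{0})\cdot\tau=h_{0}\cdot\tau=\tau$ give $g\cdot\la=(g\cdot\la_{0})\big(\varphi(g,\la_{0})\cdot\tau\big)=\la_{0}\tau=\la$, and $(2)(c)$ of Definition \ref{defn2.3} gives $\varphi(g,\la)=\varphi(\varphi(g,\la_{0}),\tau)=\varphi(h_{0},\tau)=e_{G}$. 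The main obstacle is the middle step: the relation ``$\varphi(g,\la_{0})$ fixes every $z\in v_{0}\partial\La$'' does not by itself force $\varphi(g,\la_{0})=e_{G}$, which is exactly why the extra hypothesis is imposed; the role of the boundary path condition is to guarantee that the finite exhaustive set $X$ supplied by that hypothesis actually meets our specific path, so that a suitable initial segment $\tau$ of the tail of $x$ can be appended to $\la_{0}$.
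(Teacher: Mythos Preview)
Your proof is correct and follows essentially the same approach as the paper: pass to a cylinder $Z(\mu\la_{0})$ of fixed points, deduce $g\cdot\la_{0}=\la_{0}$ and that $h_{0}=\varphi(g,\la_{0})$ fixes all of $s(\la_{0})\partial\La$, invoke the standing hypothesis to get $X\in s(\la_{0})\mathbf{FE}(\La)$ of strongly $h_{0}$-fixed paths, and set $\la=\la_{0}\tau$ for a suitable $\tau\in X$. You are actually more careful than the paper on one point: the paper simply writes ``pick some $\tau\in X$'' and then checks $g\cdot\la=\la$ and $\varphi(g,\la)=e_{G}$, without verifying $x\in Z(\mu\la)$; your use of the boundary-path property of $w=\sigma^{d(\mu\la_{0})}(x)$ to select $\tau\in X$ with $w(0,d(\tau))=\tau$ is precisely what is needed to secure that remaining conclusion.
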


\begin{proof}
Since $x$ is an interior fixed point for $\theta_s$, there is some $\gamma\in s(\mu)\La$ such that $x\in Z(\mu\gamma)\subseteq Z(\mu)$ and all boundary paths in $Z(\mu\gamma)$ are fixed by $\theta_s$. So we can write
\begin{align*}
 \theta_s(\mu\gamma y)&=\mu\gamma y\\
\Longrightarrow \hspace{17mm} \mu(g\cdot (\gamma y)) &=\mu \gamma y\\
\Longrightarrow \hspace{7mm}  (g\cdot\gamma)(\varphi(g,\gamma)\cdot y)&=\gamma y
\end{align*}
for all $y\in Z(s(\gamma))$. Hence, $g\cdot\gamma=\gamma$ and $\varphi(g,\gamma)\cdot y=y$ for every $y\in Z(s(\gamma))=s(\gamma)\partial\La$ by the unique factorization property. Denote $h=\varphi(g,\gamma)$ for convenience. Now, hypothesis gives a finite exhaustive set $X\in s(\gamma)\mathbf{FE}(\La)$ such that every $\tau\in X$ is strongly fixed by $h$. Pick some $\tau\in X$ and define $\la:=\gamma \tau$. Then
$$g\cdot\la=(g\cdot\gamma)(\varphi(g,\gamma)\cdot\tau)=\gamma(h\cdot\tau)=\gamma \tau=\la$$
and further,
$$\varphi(g,\la)=\varphi(g,\gamma\tau)=\varphi(\varphi(g,\gamma),\tau)=\varphi(h,\tau)=e_G.$$
Consequently, $\la \in s(\mu)\La$ is the desired element of $\La$.
\end{proof}

Now we prove the main result of this section.

\begin{thm}\label{thm9.6}
Let $(G,\La)$ be a self-similar $k$-graph. Consider the following statements:
\begin{enumerate}[(1)]
  \item The action $\theta:\sg \curvearrowright \partial\La$ is topologically free.
  \item The groupoid $\gt$ is effective.
  \item \begin{itemize}
          \item[(a)] $(G,\La)$ satisfies $G$-aperiodic condition (A), and
          \item[(b)] for every $v\in \La^0$ and $g\in G$, we have the following implication:\\
          $g\cdot x=x ~ (\forall x\in v\partial\La) ~ \Longrightarrow ~ \exists X\in v\mathbf{FE}(\La)$ such that all $\tau\in X$ are strongly fixed by $g$.
        \end{itemize}
\end{enumerate}
Then (3) implies (1) and (2). Moreover, if $\mathbf{SF}_g$ is locally exhausted for every $g\in G\setminus \{e_G\}$ (see Definition \ref{defn5.1}), then all (1), (2) and (3) are equivalent.
\end{thm}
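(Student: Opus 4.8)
The equivalence $(1)\Leftrightarrow(2)$ is immediate from \cite[Theorem 4.7]{exe16} applied to the action $\theta\colon\sg\curvearrowright\partial\La$, using the identification $\widehat{\mathcal{E}}_{\mathrm{tight}}(\sg)\cong\partial\La$ from Proposition \ref{prop6.4}. For $(3)\Rightarrow(1)$, I would fix $F\in\sg$ with an interior fixed point $x$ of $\theta_F$ and first reduce to a singleton: the triples of $F$ are pairwise orthogonal, so the cylinders $Z(\nu)$ with $(\mu,g,\nu)\in F$ are disjoint open sets on each of which $\theta_F$ agrees with $\theta_{\{(\mu,g,\nu)\}}$; hence $x$ is an interior fixed point of $\theta_s$ for a single $s=\{(\mu_0,g_0,\nu_0)\}\in\sg$. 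Lemma \ref{lem9.5} (which uses $(3)(a)$) forces $\nu_0=\mu_0$, and then Lemma \ref{lem9.6} --- whose hypothesis on $(G,\La)$ is exactly $(3)(b)$ --- produces $\la\in s(\mu_0)\La$ with $x\in Z(\mu_0\la)$, $g_0\cdot\la=\la$ and $\varphi(g_0,\la)=e_G$, i.e.\ $\la$ is strongly fixed by $g_0$. By Lemma \ref{lem4.3} this gives $\iota_{\mu_0\la}\le\{(\mu_0,g_0,\mu_0)\}=F\iota_{\mu_0}\le F$, while $x\in D^{\iota_{\mu_0\la}}$; so $x$ is a trivial fixed point of $F$, and $\theta$ is topologically free.

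For the final assertion, assume $\mathbf{SF}_g$ is locally exhausted for all $g\in G\setminus\{e_G\}$. Then $\gt$ is Hausdorff by Theorem \ref{thm5.2}, it is second countable (as $\La$ and $G$ are countable), and $\partial\La$ is locally compact Hausdorff, hence Baire; so by \cite[Proposition 3.1]{ren08} the groupoid $\gt$ is effective if and only if the units with trivial isotropy form a dense subset of $\partial\La$. Since $(1)\Leftrightarrow(2)$ is already known, it suffices to prove that if $(3)$ fails then this density fails, and I would split according to which of $(a)$, $(b)$ fails.

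Suppose $(3)(b)$ fails at some $v\in\La^0$, $g\in G$ with $g\cdot y=y$ for all $y\in v\partial\La$ but no $X\in v\mathbf{FE}(\La)$ consisting of $g$-strongly-fixed paths; then $g\neq e_G$ (the residual case $g=e_G$, which forces $v$ to be a source in every coordinate, is handled separately). Now $\Theta(\{(v,g,v)\},Z(v))$ is a basic open subset of $\gt$ contained in the isotropy bundle (because $\theta_{\{(v,g,v)\}}$ is the identity on $Z(v)$), and a computation with Lemma \ref{lem4.3} shows that its intersection with $\gt^{(0)}$ corresponds to $\bigcup_{\la\in v\mathbf{SF}_g}Z(\la)\subseteq Z(v)$; hence effectiveness of $\gt$ would force $Z(v)=\bigcup_{\la\in v\mathbf{SF}_g}Z(\la)$, and then compactness of $Z(v)$ \cite[Proposition 5.7]{far05} yields finitely many $\la_1,\dots,\la_n\in v\mathbf{SF}_g$ (none a vertex, as $g\neq e_G$) with $Z(v)=\bigcup_iZ(\la_i)$; since one checks $\{\la_1,\dots,\la_n\}$ is exhaustive in $v\La$, this contradicts the failure of $(b)$. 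So $\gt$ is not effective, hence not topologically principal.

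Suppose instead $(3)(a)$ fails: there is $v\in\La^0$ such that every $x\in v\partial\La$ satisfies $\sigma^m(x)=g\cdot\sigma^n(x)$ for some $m\neq n$ with $m,n\le d(x)$ and some $g\in G$. For fixed $m\neq n$, $g\in G$, and $\mu\in v\La^{m\vee n}$ the set $\{x\in Z(\mu):\sigma^m(x)=g\cdot\sigma^n(x)\}$ is closed (it is the preimage of the diagonal under the continuous map $x\mapsto(\sigma^m(x),g\cdot\sigma^n(x))$), and these countably many closed sets cover $Z(v)$; by the Baire property one of them has nonempty interior, so contains a cylinder $Z(\la)$. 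From the identity $\sigma^{m_0}(y)=g_0\cdot\sigma^{n_0}(y)$ holding on all of $Z(\la)$ I would, using Lemma \ref{lem9.4} to track $\varphi$, extract a triple $(\mu',g_0',\nu')\in\La\times G\times\La$ with $\mu'\neq\nu'$ such that $\theta_{\{(\mu',g_0',\nu')\}}$ fixes an entire sub-cylinder of $Z(\la)$; since $\mu'\neq\nu'$, condition $(1)$ of Lemma \ref{lem4.3} fails, so there is no nonzero idempotent below $\{(\mu',g_0',\nu')\}$, whence these interior fixed points are non-trivial, $\theta$ is not topologically free, and $\gt$ is not effective. \textbf{The main obstacle} is precisely this last extraction: because $\sigma^m$ and $\sigma^n$ shift the $\N^k$-degree by $m$ and $n$, the relation $\sigma^{m_0}(y)=g_0\cdot\sigma^{n_0}(y)$ is only informative on boundary paths of infinite degree in the coordinates where $m_0,n_0$ differ, and converting it into a self-similar periodicity of the prescribed form requires the $\N^k$-degree bookkeeping carried out, in the groupless case, in the proof of \cite[Theorem 7.1]{far05}. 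All remaining steps are routine manipulations with the multiplication and partial order of $\sg$ established in Sections \ref{sec3}--\ref{sec4}.
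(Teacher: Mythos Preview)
Your argument for $(1)\Leftrightarrow(2)$ and for $(3)\Rightarrow(1)$ matches the paper's proof essentially line for line, including the reduction to a singleton and the appeal to Lemmas \ref{lem9.5}, \ref{lem9.6} and \ref{lem4.3}. Your treatment of the failure of $(3)(b)$ is also the same as the paper's, just phrased contrapositively through effectiveness rather than through topological freeness; the compactness-of-$Z(v)$ step producing a finite exhaustive set of strongly fixed paths is identical.

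The real divergence is in $(2)\Rightarrow(3)(a)$, and here you are making the problem harder than it is. You aim to contradict \emph{topological freeness} directly, which forces you to manufacture a single $F\in\sg$ whose fixed-point set has nonempty interior; this is what drives you to the Baire category argument and to the ``extraction'' step you flag as the main obstacle. The paper instead contradicts \emph{topological principality} (the dense-trivial-isotropy characterisation from \cite[Proposition 3.1]{ren08} that you already invoked): if $(3)(a)$ fails at $v$, then for \emph{each individual} $x\in Z(v)$ one chooses $m\neq n$ and $g$ with $\sigma^m(x)=g\cdot\sigma^n(x)$, sets $\mu=x(0,m)$, $\nu=x(0,n)$, and observes that $[\{(\mu,g,\nu)\},x]$ lies in $\mathrm{Iso}(x)$. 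Since $d(\mu)\neq d(\nu)$, Lemma \ref{lem4.3} (via Lemma \ref{lem4.2}) shows $\{(\mu,g,\nu)\}$ dominates no nonzero idempotent, so this germ is not a unit. Thus no $x\in Z(v)$ has trivial isotropy, the open set $Z(v)$ misses the trivial-isotropy locus, and effectiveness fails---no Baire argument, no uniform $F$, and no $\N^k$-degree bookkeeping required. Your Baire route can in fact be completed (once $Z(\la)\subseteq Z(\mu_0)$ with $d(\mu_0)=m_0\vee n_0$, the triple $(\la(0,m_0),g_0,\la(0,n_0))$ works directly), but it is an unnecessary detour.
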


\begin{proof}
First, note that (1) $\Longleftrightarrow$ (2) is just \cite[Theorem 4.7]{exe16}, so we will prove (3) $\Longrightarrow$ (1).

(3) $\Longrightarrow$ (1). Assume $(G,\La)$ satisfies (a)+(b) in (3). In order to prove (1), we fix some $F \in \sg$ and show that every interior fixed point of $\theta_F$ is a trivial one. So, let $x\in \partial\La$ be an interior fixed point for $\theta_F$. Then there is a singleton $s=\{(\mu,g,\nu)\}\subseteq F$ such that $x=\nu y\in Z(\nu)$ and $\mu(g\cdot y)=\nu y$.  First, Lemma \ref{lem9.5} implies $\nu=\mu$. Also, by Lemma \ref{lem9.6}, there is $\la\in s(\mu)\La$ such that $g\cdot\la=\la$, $\varphi(g,\la)=e_G$, and $x=\la y$ for some $y\in s(\la)\partial\La$. Then the fact
\begin{align*}
s \iota_{\mu\la} & =\{(\mu,g,\mu)\}\{(\mu\la,e_G,\mu\la)\}\\
& =\{(\mu(g\cdot\la),\varphi(g,\la),\mu\la)\}\\
&= \{(\mu\la,e_G,\mu\la)\}\\
& =\iota_{\mu\la}
\end{align*}
says that $\iota_{\mu\la}\leq s$, and since $\theta_{\iota_{\mu\la}}(x)=\theta_{\iota_{\mu\la}}(\mu\la y)=x$, $x$ would be a trivial fixed point. Consequently, the action $\theta:\sg \curvearrowright \partial \La$ is topologically free.

For the second statement, we assume that $\mathbf{SF}_g$ is locally exhausted for any $g\in G\setminus\{e_G\}$, and prove (1), (2) $\Longrightarrow$ (3).

(2) $\Longrightarrow$ (3)(a). Since $\g$ is second countable and Hausdorff by Theorem \ref{thm5.2}, \cite[Proposition 3.1]{ren08} implies that $\gt$ is effective if and only if the set
\begin{equation}\label{eq9.2}
\{x\in \partial \La: \mathrm{Iso}(x)=\{x\}\}
\end{equation}
is dense in $\partial \La=\mathcal{G}_{\mathrm{tight}}^{(0)}(\sg)$, where $\mathrm{Iso}(x):=\{[F,x]\in \gt: s([F,x])=r([F,x])=x\}$. By way of contradiction, assume $(G,\La)$ does not satisfy $G$-aperiodicity condition (A). Then there is $v\in \La^0$ such that for every $x\in Z(v)$ we have $\sigma^{m}(x)=g \cdot \sigma^{n}(x)$ for some $g\in G$ and $m\neq n\leq d(x)$. Given $x\in Z(v)$, define $\mu:=x(0,m)$, $\nu:=x(0,n)$ and $F=\{(\mu,g,\nu)\}$. Then
$$\theta_F(x)=Fx= \mu (g \cdot \sigma^{n}(x))=\mu \sigma^{m}(x)=x$$
and hence $s([F,x])=x=r([F,x])$, which means $[F,x]\in \mathrm{Iso}(x)$. Note that because $d(\mu)\neq d(\nu)$, Lemma \ref{lem4.3} says that $F$ does not dominate any idempotent in $\sg$, so $[F,x]\neq x$\footnote{By identification $\partial \La=\mathcal{G}^{(0)}_{\mathrm{tight}}(\sg)$, $[F,x]=x$ means that $[F,x]=[E,x]$ for some idempotent $E\leq F$.}. It follows
$$Z(v) \cap \{x\in \partial \La: \mathrm{Iso}(x)=\{x\}\}=\emptyset,$$
so the set (\ref{eq9.2}) is not dense in $\partial \La$. Therefore, $\gt$ is not effective by \cite[Proposition 3.1]{ren08}.

It remains to prove (1) $\Longrightarrow$(3)(b). Assume $v\in \La^0$ and $g\in G$ such that $g\cdot x=x$ for every $x\in v\partial\La$. In particular, $g\cdot r(x)=r(g\cdot x)=r(x)$, so $s=\{(v,g,v)\}\in \sg$. Then every $x\in v\partial \La$ is fixed by $\theta_s$. Since $Z(v)=v\partial\La$ is open in $\partial\La$, every such $x$ is an interior fixed point for $\theta_s$, and the topological freeness implies that there is $\mu_x\in v\La$ such that $\iota_{\mu_x}\leq s$ and $\theta_{\iota_{\mu_x}}(x)=x$. Note that $\mu_x$ must be of the form $\mu_x=x(0,n)$ for some $n\leq d(x)$, and we have
$$\mu_x(g\cdot y)=\mu_x y$$
where $y=\sigma^n(x)$. Moreover, $\iota_{\mu_x}\leq s$ implies
\begin{align*}
\{(v,g,v)\}\{(\mu_x,e_G,\mu_x)\}&=\{(\mu_x,e_G,\mu_x)\}\\
\Longrightarrow \hspace{10mm} (\mu_x,\varphi(g,\mu_x),\mu_x)&=(\mu_x,e_G,\mu_x)\\
\Longrightarrow \hspace{24mm} \varphi(g,\mu_x)&=e_G.
\end{align*}
Therefore, $\mu_x$ is strongly fixed by $g$.

On the other hand, since
$$Z(v)=v\partial\La=\bigcup_{x\in v\partial\La}Z(\mu_x)$$
and $Z(v)$ is compact, there exists a finite set $X\subseteq \{\mu_x:x\in v\partial\La\}$ such that $Z(v)=\bigcup_{\mu\in X}Z(\mu)$. One may easily check that the the set $X\subseteq v\La$ is exhaustive, whence statement (3)(b) is proved.
\end{proof}

Note that, in case $(G,\La)$ is pseudo free, then $\mathbf{SF}_g=\emptyset$ for all $g\in G\setminus\{e_G\}$, and therefore statements (1), (2), and (3) in Theorem \ref{thm9.6} are equivalent.

\begin{rem}
If a self-similar $k$-graph $(G,\La)$ (with finitely aligned $\La$) is $G$-aperiodic in the sense of \cite[Definition 6.4]{li21} then it satisfies both conditions (a) and (b) in Theorem \ref{thm9.6}(3) above, but the converse does not necessarily hold for general self-similar $k$-graphs $(G,\La)$. Recall that in case $(G,\La)$ is pseudo free, then for any $g\in G\setminus \{e_G\}$ there are no strongly fixed paths by $g$. This follows in particular that $G$-aperiodicity of \cite[Definition 6.4]{li21} is equivalent to conditions (a)+(b) in Theorem \ref{thm9.6} provided $(G,\La)$ is pseudo free.
\end{rem}


\section{Simplicity of $\og$: the Hausdorff case}\label{sec10}

Given a Hausdorff locally compact and second-countable $\acute{\mathrm{e}}$tale groupoid $\mathcal{G}$, \cite[Theorem 5.1]{bro14} characterizes the simplicity of full $C^*$-algebra $C^*(\mathcal{G})$. Hence, by applying Theorems \ref{thm5.2}, \ref{thm8.3} and \ref{thm9.6}, we obtain immediately the following proposition.

\begin{prop}\label{prop10.1}
Let $(G,\La)$ be a self-similar $k$-graph such that $\mathbf{SF}_g$ is locally exhausted for every $g\in G\setminus \{e_G\}$ (Definition \ref{defn5.1}). If
\begin{enumerate}[(1)]
\item $(G,\La)$ is $G$-cofinal (Definition \ref{defn8.2}),
\item $(G,\La)$ satisfies $G$-aperiodic condition (A), and
\item for each $v\in \La^0$ and $g\in G$ satisfying the property $g\cdot x=x ~~ (\forall x\in v\partial\La)$, there exists $X\in v\mathbf{FE}(\La)$ such that each $\tau\in X$ is strongly fixed by $g$,
\end{enumerate}
then $C^*_{\mathrm{red}}(\sg)=C^*_{\mathrm{red}}(\gt)$ is a simple $C^*$-algebra. If furthermore $\gt$ is amenable in the sense of \cite{ana00}, then $\og$ is simple.
\end{prop}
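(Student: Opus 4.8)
The plan is to recognise $\gt$ as a Hausdorff, locally compact, second-countable \'etale groupoid and then apply the simplicity criterion of \cite[Theorem 5.1]{bro14}, whose hypotheses will be supplied by the structural theorems of Sections \ref{sec5}, \ref{sec8} and \ref{sec9}.

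First I would record the topological properties of $\gt$. By the description of $\gt$ given after Proposition \ref{prop6.4}, it is an ample groupoid, hence \'etale and locally compact. Since $\mathbf{SF}_g$ is locally exhausted for every $g\in G\setminus\{e_G\}$, Theorem \ref{thm5.2} shows that $\gt$ is Hausdorff. Second countability follows from $\La$ being a countable category: the unit space of $\gt$ is homeomorphic to $\partial\La$, which admits a countable basis by the construction in \cite[Section 5]{far05}, and $\gt$ itself has the countable basis of compact open bisections $\{\Theta(F,Z(\nu)):F\in\sg,\ \nu\in\La\}$.

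Next I would feed in conditions (1)--(3). Condition (1) ($G$-cofinality) is equivalent to minimality of $\gt$ by Theorem \ref{thm8.3}. Conditions (2) and (3) are exactly items (a) and (b) appearing in Theorem \ref{thm9.6}(3); because $\mathbf{SF}_g$ is locally exhausted for every $g\in G\setminus\{e_G\}$, statements (1), (2) and (3) of Theorem \ref{thm9.6} are equivalent, so these give that $\gt$ is effective. Thus $\gt$ is a minimal, effective, Hausdorff, locally compact, second-countable \'etale groupoid, and \cite[Theorem 5.1]{bro14} yields that $C^*_r(\gt)$ is simple; recalling that the reduced tight $C^*$-algebra $C^*_{\mathrm{red}}(\sg)$ coincides with $C^*_r(\gt)$, the first assertion follows. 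If in addition $\gt$ is amenable in the sense of \cite{ana00}, then $C^*(\gt)=C^*_r(\gt)$ by \cite[Proposition 6.1.8]{ana00}, and since $\og\cong C^*_{\mathrm{tight}}(\sg)\cong C^*(\gt)$ by Theorem \ref{thm4.7}, we conclude that $\og$ is simple.

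The argument is essentially an assembly of earlier results, so there is no substantial new obstacle; the one point that requires care is the interface with \cite{bro14}, namely verifying that $\gt$ meets all of its standing hypotheses — in particular deducing second countability of $\gt$ from countability of $\La$, and confirming that the notion of ``effective'' used there agrees with the interior-of-the-isotropy-bundle condition of Section \ref{sec9} (these coincide for second-countable \'etale groupoids with Baire unit space).
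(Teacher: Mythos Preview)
Your proposal is correct and follows essentially the same route as the paper: both verify that $\gt$ is a second-countable Hausdorff ample groupoid (via Theorem \ref{thm5.2}), invoke Theorems \ref{thm8.3} and \ref{thm9.6} to obtain minimality and effectiveness from conditions (1)--(3), and then apply \cite[Theorem 5.1]{bro14} for simplicity of the reduced $C^*$-algebra, with amenability and Theorem \ref{thm4.7} handling the final statement about $\og$. One very minor remark: for the direction you need, Theorem \ref{thm9.6} already gives $(3)\Rightarrow(2)$ unconditionally, so you do not strictly need the locally-exhausted hypothesis at that step (it is needed only for Hausdorffness via Theorem \ref{thm5.2}).
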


\begin{proof}
Recall that $\gt$ is a second-countable ample groupoid, which is Hausdorff by Theorem \ref{thm5.2}. Moreover, $\gt$ is minimal and effective by Theorems \ref{thm8.3} and \ref{thm9.6}, respectively. Now, proof of the ``if" part of \cite[Theorem 5.1]{bro14} concludes that $C^*_{\mathrm{red}}(\gt)$ is a simple $C^*$-algebra. In particular, in case $\gt$ is amenable, we then have $\og \cong C^*(\gt)=C^*_{\mathrm{red}}(\gt)$, and consequently $\og$ is simple.
\end{proof}

By \cite[Proposition 3.10]{li21-ideal}, whenever $G$ is an amenable group and $\La$ is a row-finite source-free $k$-graph, then $\og$ is nuclear and therefore $\og \cong C^*_{\mathrm{red}}(\gt)$. In this case, the boundary path space $\partial\La$ is just $\La^\infty$. So, we conclude the following generalization of \cite[Theorem 6.6(ii)]{li21} by our inverse semigroup approach.

\begin{cor}\label{cor10.2}
Let $(G,\La)$ be a self-similar $k$-graph over an amenable group $G$ and a row-finite source-free $k$-graph $\La$. Suppose that $\gt$ is Hausdorff (see Theorem \ref{thm5.2}). Then $\og$ is simple if and only if $(G,\La)$ satisfies conditions (1), (2) and (3) in Proposition \ref{prop10.1} above.
\end{cor}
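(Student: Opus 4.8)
The plan is to translate the simplicity of $\og$ into a statement about $\gt$ via Theorem~\ref{thm4.7}, and then read off the graphical conditions using the Brown--Clark--Farthing--Sims simplicity criterion \cite{bro14} together with the characterizations of minimality and effectiveness from Sections~\ref{sec8} and~\ref{sec9}.

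First I would collect the reductions already in hand. Since $G$ is amenable and $\La$ is row-finite source-free, \cite[Proposition~3.10]{li21-ideal} makes $\og$ nuclear, so Theorem~\ref{thm4.7} gives $\og \cong C^*(\gt) = C^*_{\mathrm{red}}(\gt)$, and in this situation $\partial\La = \La^\infty$. By hypothesis $\gt$ is Hausdorff, hence by Theorem~\ref{thm5.2} the set $\mathbf{SF}_g$ is locally exhausted for every $g \in G \setminus \{e_G\}$; this is precisely the standing assumption under which Theorems~\ref{thm8.3} and~\ref{thm9.6} become \emph{two-sided} equivalences. I would also recall from Section~\ref{sec6} that $\gt$ is a second-countable ample --- in particular locally compact, Hausdorff, \'etale --- groupoid, so \cite[Theorem~5.1]{bro14} is applicable to it.

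The ``if'' direction follows from Proposition~\ref{prop10.1}: conditions (1)--(3), together with the local exhaustion of $\mathbf{SF}_g$ (which is automatic from Hausdorffness), yield that $C^*_{\mathrm{red}}(\gt)$ is simple, and since $\og \cong C^*_{\mathrm{red}}(\gt)$ in the present setting, $\og$ is simple. For the ``only if'' direction, I would assume $\og$ simple, so $C^*_{\mathrm{red}}(\gt) \cong \og$ is simple, and then invoke the converse half of \cite[Theorem~5.1]{bro14} to conclude that $\gt$ is both minimal and effective. Minimality together with Theorem~\ref{thm8.3} gives that $(G,\La)$ is $G$-cofinal, i.e.\ condition~(1). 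Effectiveness together with Theorem~\ref{thm9.6} --- whose equivalence of (1), (2), (3) is available precisely because $\mathbf{SF}_g$ is locally exhausted --- gives statements (3)(a) and (3)(b) of Theorem~\ref{thm9.6}, which are conditions~(2) and~(3) of Proposition~\ref{prop10.1} respectively. This produces all three conditions and closes the equivalence.

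The main obstacle is not a hard argument but the bookkeeping: one must verify that \cite[Theorem~5.1]{bro14} genuinely applies (Hausdorffness from Theorem~\ref{thm5.2}, second countability and ampleness from Section~\ref{sec6}, and the identification $C^*(\gt) = C^*_{\mathrm{red}}(\gt)$ coming from nuclearity of $\og$), and that in both Theorems~\ref{thm8.3} and~\ref{thm9.6} it is the full equivalence, not merely the one-directional implication, that is in force --- guaranteed by the Hausdorffness hypothesis via Theorem~\ref{thm5.2}. Once these are in place, the corollary is a direct assembly of results already established.
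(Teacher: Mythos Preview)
Your proposal is correct and follows essentially the same route as the paper: use \cite[Proposition~3.10]{li21-ideal} for nuclearity to identify $\og$ with $C^*_{\mathrm{red}}(\gt)$, invoke Proposition~\ref{prop10.1} for the ``if'' direction, and appeal to the converse of \cite[Theorem~5.1]{bro14} together with the equivalences in Theorems~\ref{thm8.3} and~\ref{thm9.6} (available because Hausdorffness forces $\mathbf{SF}_g$ to be locally exhausted) for the ``only if'' direction. The paper leaves the converse implicit in the paragraph preceding the corollary, but your explicit unpacking is exactly what is intended.
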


There are several results for minimal and effective properties of groupoids associated to an LCSC (see \cite{spi14, ort20, li23, ort23} for example), which can be of course considered for self-similar $k$-graphs by constructing the Zappa-Sz$\acute{\mathrm{e}}$p product $\La \rtimes^{\varphi} G$ and applying Proposition \ref{prop6.5}. In particular, our Theorems \ref{thm8.3} and \ref{thm9.6} describe \cite[Theorems 6.4, 6.6 and Propositions 7.7, 7.8]{ort20} by giving equivalent self-similar $k$-graphical criteria for such properties, and Corollary \ref{cor10.2} is an analogous version of \cite[Theorems 6.7 and 7.9]{ort20} in the self-similar $k$-graph setting.

\begin{rem}
It seems that in the  the simplicity results \cite[Theorems 6.7 and 7.9]{ort20} for LCSC $C^*$-algebras, the Hausdorffness of groupoid is necessary. For instance, \cite[Corollary 5.26]{cla19} provides a class of non-simple Nekrashevych algebras $A_{\mathbb{Z}_2}(\mathcal{G}_{\mathrm{tight}}(\mathcal{S}_{G,X}))$ such that the underlying groupoids are minimal and effective, but not Hausdorff (see also \cite[Example 4.5]{nek16}). Since such algebras may be presented via self-similar 1-graphs (with a single vertex) \cite{exe17} and for every self-similar 1-graph $(G,E,\varphi)$ we have $\widehat{\mathcal{E}}_\infty(\mathcal{S}_{G,E})=\widehat{\mathcal{E}}_{\mathrm{tight}}(\mathcal{S}_{G,E})=\partial E$ (see the paragraph after Proposition \ref{prop6.4}), it is a counter example for Theorems 6.7 and 7.9 of \cite{ort20} in the case that $\mathcal{G}_{\mathrm{tight}}$ is non-Hausdorff (see also \cite[Section 6.3]{ste21}).
\end{rem}


\section{Simplicity of $\og$: the non-Hausdorff case}\label{sec11}

The simplicity of $C^*_{\mathrm{red}}(\mathcal{G})$ for a non-Hausdorff $\acute{\mathrm{e}}$tale groupoid $\mathcal{G}$ is more complicated (see \cite{cla19}). In particular, for the tight groupoid of an inverse semigroup $S$, one may follow \cite[Subsection 5.2]{cla19} to find certain conditions to insure that $C^*_{\mathrm{red}}(\mathcal{G}_{\mathrm{tight}}(S))$ is a simple $C^*$-algebra. More concretely, it is proposed a stronger condition than the topological freeness for the action $\theta: S\curvearrowright \widehat{\mathcal{E}}_{\mathrm{tight}}(S)$, called Condition (S), such that it together with the irreducibility conclude the simplicity of $C^*_{\mathrm{red}}(\mathcal{G}_{\mathrm{tight}}(S))$. In the final section of this article, we consider again a self-similar $k$-graph $(G,\La)$ over a row-finite source-free $k$-graph $\La$ and examine this condition for the action $\theta: \sg \curvearrowright \widehat{\mathcal{E}}_{\mathrm{tight}}(\sg)$.

Let us first recall Condition (S). Let $S$ be an inverse semigroup with zero. For $s\in S$, we denote by $\mathrm{Fix}(s)$ and $\mathrm{TFix}(s)$ respectively the sets of fixed points and trivially fixed points for $s$, i.e.
$$\mathrm{Fix}(s)=\left\{x\in D^{s^* s}: \theta_s(x)=x \right\}$$
and
$$\mathrm{TFix}(s)=\left\{ x\in \mathrm{Fix}(s): x\in D^e \mathrm{~for~ some~ idempotent~} e\leq s \right\}.$$

\begin{defn}[{\cite[Definition 5.4]{cla19}}]
We say an inverse semigroup $S$ satisfies \emph{Condition (S)} if for any finite set $\{s_1,\ldots, s_l\}\subseteq S\setminus \mathcal{E}(S)$ and any $x\in \bigcap_{i=1}^l \mathrm{Fix}(s_i)$, then
\begin{equation}\label{eq11.1}
x\in \bigcap_{i=1}^l \big(\mathrm{Fix}(s_i)\setminus \mathrm{TFix}(s_i)\big) ~~ \Longrightarrow ~~ x\notin \left(\bigcup_{i=1}^l \mathrm{Fix}(s_i)  \right)^\circ.
\end{equation}
\end{defn}

\begin{lem}\label{lem11.2}
Let $(G,\La)$ be a self-similar $k$-graph over a row-finite, source-free $k$-graph $\La$. Suppose that $(G,\La)$ satisfies the $G$-aperiodic condition (A) (Definition \ref{defn9.2}). If $F_1,\ldots,F_l$ are finitely many singletons in $\sg$ of the form $F_i=\{(\mu_i,g_i,\nu_i)\}$ and $x\in \left(\bigcup_{i=1}^l \mathrm{Fix}(F_i)  \right)^\circ$, then there exists $1\leq j\leq l$ such that $d(\mu_j)=d(\nu_j)$.
\end{lem}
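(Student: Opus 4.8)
The plan is to use the interior hypothesis to produce a single $G$-aperiodic boundary path lying in one of the $\mathrm{Fix}(F_i)$, and then to exploit the unique factorisation property of $\La$ to force the two prefixes $\mu_j,\nu_j$ of some $F_j$ to have equal degree. Since $\La$ is row-finite and source-free we have $\partial\La=\La^\infty$, so every boundary path is infinite; and since $x\in\bigl(\bigcup_i\mathrm{Fix}(F_i)\bigr)^\circ$ there is a cylinder $Z(\la)\subseteq\bigcup_i\mathrm{Fix}(F_i)$ with $x\in Z(\la)$. The next step is to apply $G$-aperiodic condition (A) (Definition \ref{defn9.2}) at the vertex $s(\la)$ to obtain $u\in s(\la)\partial\La$ such that $\sigma^m(u)=g\cdot\sigma^n(u)$ implies $m=n$; note that every shift $\sigma^c(u)$ inherits this property because $\N^k$ is cancellative. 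Then $z:=\la u\in Z(\la)$, so $z\in\mathrm{Fix}(F_j)$ for some $1\leq j\leq l$, and the claim will be that $d(\mu_j)=d(\nu_j)$ for this $j$.

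Write $p:=d(\mu_j)$ and $q:=d(\nu_j)$, and suppose for contradiction that $p\neq q$. As $z\in\mathrm{Fix}(F_j)\subseteq D^{F_j^*F_j}=Z(\nu_j)$, the path $z$ begins with $\nu_j$. Let $\gamma:=z(0,\,d(\la)\vee p\vee q)$ be the corresponding prefix of $z$; then $d(\gamma)\geq p\vee q$, we have $z=\gamma u'$ with $u':=\sigma^{d(\gamma)}(z)$ a ($G$-aperiodic) shift of $u$, and $\gamma=\nu_j\delta$ with $\delta:=\gamma(q,d(\gamma))$. Now unwind the fixed-point equation $\theta_{F_j}(z)=z$: since $\theta_{F_j}(\nu_j w)=\mu_j(g_j\cdot w)$ and $\sigma^q(z)=\delta u'$, it reads $\mu_j\bigl(g_j\cdot(\delta u')\bigr)=\gamma u'$; applying $\sigma^p$, and using the self-similar equation (Definition \ref{defn2.3}(2)(b)) together with $\sigma^p(\gamma u')=\gamma(p,d(\gamma))\,u'$, this becomes
$$(g_j\cdot\delta)\bigl(\varphi(g_j,\delta)\cdot u'\bigr)=\gamma(p,d(\gamma))\,u'.$$

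It remains to shift this identity until only $u'$ survives on each side and then to invoke aperiodicity. Shifting by $s:=d(\gamma)-(p\vee q)$ cancels the common initial segment and produces $\beta\,(h\cdot u')=\alpha\,u'$ with $h:=\varphi(g_j,\delta)$, $d(\beta)=(p\vee q)-q$ and $d(\alpha)=(p\vee q)-p$; the point is that these two degrees have disjoint coordinate supports. Shifting once more by $d(\alpha)\vee d(\beta)=d(\alpha)+d(\beta)$ and applying Lemma \ref{lem9.4} turns this into
$$\sigma^{(p\vee q)-q}(u')=\varphi\!\bigl(h,\,u'(0,(p\vee q)-p)\bigr)\cdot\sigma^{(p\vee q)-p}(u'),$$
so the defining property of $u'$ forces $(p\vee q)-q=(p\vee q)-p$; since these have disjoint supports they must both vanish, i.e.\ $p\leq q$ and $q\leq p$, whence $p=q$, a contradiction. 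Therefore $d(\mu_j)=d(\nu_j)$. The one genuinely delicate point is that in a $k$-graph $p=d(\mu_j)$ and $q=d(\nu_j)$ need not be comparable, so one cannot simply cancel ``the longer of $\mu_j,\nu_j$''; the two-stage shift --- first by $d(\gamma)-(p\vee q)$, then by the coordinatewise symmetric difference $(p\vee q)-(p\wedge q)$ --- together with the disjoint-support bookkeeping is exactly what makes the cancellation go through uniformly in all coordinates.
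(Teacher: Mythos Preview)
Your proof is correct and follows essentially the same strategy as the paper's: use the interior hypothesis to find a cylinder $Z(\lambda)\subseteq\bigcup_i\mathrm{Fix}(F_i)$, apply $G$-aperiodic condition (A) at $s(\lambda)$ to obtain an aperiodic infinite path in that cylinder which must then be fixed by some $F_j$, and unwind the fixed-point equation $\theta_{F_j}(z)=z$ into a shift relation that aperiodicity forces to be degree-matching. Your explicit two-stage shift (first by $d(\gamma)-(p\vee q)$, then by $(p\vee q)-(p\wedge q)$) handles the possibly incomparable degrees $p,q\in\N^k$ more carefully than the paper's compressed passage to its equation (\ref{eq11.3}), but the underlying argument is the same.
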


\begin{proof}
Let $x$ be an interior point of $\bigcup_{i=1}^l \mathrm{Fix}(F_i)$. Since the cylinders $Z(x(0,n))$ are a neighbourhood base at $x$, there is $n\in \mathbb{N}^k$ such that $Z(x(0,n))\subseteq \bigcup_{i=1}^l \mathrm{Fix}(F_i)$. So, every $y\in Z(x(0,n))$ is a fixed point for some $F_i$; that is
\begin{align}\label{eq11.2}
\theta_{F_i}(y)&=y \nonumber\\
\Longrightarrow \hspace{17mm} \theta_{\{(\mu_i,g_i,\nu_i)\}}(y)&=y \nonumber \\
\Longrightarrow \hspace{13mm} \mu_i(g_i\cdot\sigma^{d(\nu_i)}(y)) & =y =\mu_i\big(\sigma^{d(\mu_i)}(y) \big) \\
\Longrightarrow \hspace{19.5mm}  g_i\cdot\sigma^{d(\nu_i)}(y)&=\sigma^{d(\mu_i)}(y). \nonumber
\end{align}
If we write $y=x(0,n)z$, where $z\in x(n)\La^\infty$, then Lemma \ref{lem9.4} yields
\begin{equation}\label{eq11.3}
\varphi(g_i,x(0,n)) \cdot \sigma^{d(\nu_i)}(z)=\sigma^{d(\mu_i)}(z).
\end{equation}
It says that for every $z\in x(n)\La^\infty$ there exists an index $1\leq i\leq l$ such that (\ref{eq11.3}) holds.

On the other hand, by the $G$-aperiodicity at $x(n)$, there is $z\in x(n) \La^\infty$ such that $g\cdot \sigma^m(z)\neq \sigma^n(z)$ for all $m\neq n \in \mathbb{N}^k$ and $g\in G$. Hence, if $d(\mu_i)\neq d(\nu_i)$ for all $1\leq i\leq l$, this contradicts (\ref{eq11.3}). Consequently, we must have $d(\mu_j)=d(\nu_j)$ for some $1\leq j\leq l$ as desired.
\end{proof}

\begin{prop}\label{prop11.3}
Let $(G,\La)$ be a self-similar $k$-graph over a row-finite, source-free $k$-graph $\La$. If $(G,\La)$ satisfies the following properties:
\begin{enumerate}[(1)]
  \item $(G,\La)$ is $G$-cofinal (Definition \ref{defn8.2}),
  \item $(G,\La)$ satisfies the $G$-aperiodic property (A), and
  \item If $g\cdot x=x$ for some $g\in G$ and $x\in \La^\infty$, then there is $n\in \mathbb{N}^k$ such that $\varphi(g,x(0,n))=e_G$,
\end{enumerate}
then $C_{\mathrm{red}}^*(\gt)$ is a simple $C^*$-algebra.
\end{prop}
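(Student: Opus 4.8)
The plan is to reduce the statement to the general simplicity criterion of Clark--Exel--Pardo: by \cite[Lemma 5.6 and Corollary 4.12(2)]{cla19}, if an inverse semigroup $S$ with a second countable, ample tight groupoid satisfies Condition (S) and $\mathcal{G}_{\mathrm{tight}}(S)$ is minimal, then $C^*_{\mathrm{red}}(\mathcal{G}_{\mathrm{tight}}(S))$ is simple. In our situation $\sg$ is countable (as $\La$ and $G$ are), so $\gt$ is second countable and ample, and by Proposition \ref{prop6.4} together with the fact that $\partial\La=\La^\infty$ in the row-finite source-free case we have $\widehat{\mathcal{E}}_{\mathrm{tight}}(\sg)=\La^\infty$. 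Thus two things remain: (i) $\gt$ is minimal, which is immediate from hypothesis (1) and Theorem \ref{thm8.3}; and (ii) $\sg$ satisfies Condition (S). Essentially all the work is in (ii).

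To verify Condition (S) I would fix $F_1,\dots,F_l\in\sg\setminus\es$ and a point $x\in\bigcap_{i=1}^l\mathrm{Fix}(F_i)$ lying in the interior of $\bigcup_{i=1}^l\mathrm{Fix}(F_i)$, and produce an index $j$ with $x\in\mathrm{TFix}(F_j)$. The first step is to pass to singletons. Since the triples of each $F_i$ are pairwise orthogonal, $D^{F_i^*F_i}$ is a \emph{disjoint} union of cylinders $Z(\nu)$, so there is a unique triple $(\mu_i,g_i,\nu_i)\in F_i$ with $x\in Z(\nu_i)$; put $s_i:=\{(\mu_i,g_i,\nu_i)\}$, so that $s_i\leq F_i$ and $x\in\mathrm{Fix}(s_i)$, and $\theta_{F_i}$ restricts to $\theta_{s_i}$ on the open set $Z(\nu_i)$. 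If some $s_i$ happens to be idempotent (equivalently $\mu_i=\nu_i$ and $g_i=e_G$), then $s_i\leq F_i$ is an idempotent with $x\in D^{s_i}$ and we are done. Otherwise all $s_i$ are non-idempotent, and shrinking a neighbourhood of $x$ inside $\bigcap_i Z(\nu_i)$ shows $x$ is still interior to $\bigcup_i\mathrm{Fix}(s_i)$; hence Lemma \ref{lem11.2} applies and yields an index $j$ with $d(\mu_j)=d(\nu_j)$. Since $x\in\mathrm{Fix}(s_j)$, both $\mu_j$ and $\nu_j$ equal the initial segment $x(0,d(\mu_j))$ of $x$, so $\mu_j=\nu_j=:\beta$ and $s_j=\{(\beta,g_j,\beta)\}$ with $g_j\cdot y=y$ for $y:=\sigma^{d(\beta)}(x)\in\La^\infty$. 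Now hypothesis (3) gives $n\in\N^k$ with $\varphi(g_j,y(0,n))=e_G$, while applying the self-similar equation and unique factorisation to $g_j\cdot y=y$ forces $g_j\cdot y(0,n)=y(0,n)$; thus $\lambda:=y(0,n)$ is strongly fixed by $g_j$. A short computation with (\ref{eq3.1}) then shows $s_j\iota_{\beta\lambda}=\iota_{\beta\lambda}$, i.e. $\iota_{\beta\lambda}\leq s_j\leq F_j$, and $x=\beta\lambda\,\sigma^n(y)\in Z(\beta\lambda)=D^{\iota_{\beta\lambda}}$, so $x\in\mathrm{TFix}(F_j)$. This verifies Condition (S) and completes the proof.

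I expect the main obstacle to be the bookkeeping in the reduction to singletons: one must check that replacing $F_i$ by its single ``active'' triple $s_i$ at $x$ neither destroys the interiority hypothesis nor promotes a trivial fixed point to a genuine one, and this rests squarely on the orthogonality built into elements of $\sg$, which makes the cylinders $Z(\nu)$ inside $D^{F_i^*F_i}$ pairwise disjoint and hence gives $x$ a whole neighbourhood on which $\theta_{F_i}$ agrees with $\theta_{s_i}$. The conceptual crux is then the cooperation between Lemma \ref{lem11.2} --- which uses $G$-aperiodic condition (A) to localise the obstruction to a ``balanced'' triple $\{(\beta,g_j,\beta)\}$ with $g_j$ fixing the tail $y$ --- and hypothesis (3), which upgrades this group-fixing to an eventual vanishing of the cocycle and thereby exhibits the dominated idempotent $\iota_{\beta\lambda}$. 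Note finally that, in contrast to Corollary \ref{cor10.2}, no amenability or Hausdorffness is invoked, so the conclusion concerns $C^*_{\mathrm{red}}(\gt)$ only; passing to simplicity of $\og$ in the non-Hausdorff case would require separate control of the canonical surjection $C^*(\gt)\to C^*_{\mathrm{red}}(\gt)$, which is outside this argument.
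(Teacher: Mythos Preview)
Your proposal is correct and follows essentially the same route as the paper's proof: reduce to singletons via orthogonality, invoke Lemma \ref{lem11.2} to find a balanced triple $\{(\beta,g_j,\beta)\}$ with $g_j\cdot y=y$, use hypothesis (3) to produce a strongly fixed initial segment $\lambda$, and conclude $\iota_{\beta\lambda}\leq F_j$ so $x\in\mathrm{TFix}(F_j)$; then combine Condition (S) with minimality (Theorem \ref{thm8.3}) via \cite[Lemma 5.6 and Corollary 4.12]{cla19}. Your treatment of the singleton reduction and the case where some $s_i$ is already idempotent is slightly more explicit than the paper's, but the substance is identical.
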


\begin{proof}
We first show that conditions (2) and (3) above deduce Condition (S) for the action $\theta:\sg \curvearrowright \widehat{\mathcal{E}}_{\mathrm{tight}}(\sg)$. To do this, fix $F_1,\ldots,F_l\in \sg$ and some $x\in \bigcap_{i=1}^l \mathrm{Fix}(F_i)$. By Definition \ref{defn3.1}, each $F_i$ is a finite set of orthogonal triples $(\mu,g,\nu)$, so there is a unique triple $(\mu_i,g_i,\nu_i)$ in $F_i$ such that $x\in \mathrm{Fix}(\{(\mu_i,g_i,\nu_i)\})$. Thus, without loss of generality, we may simply assume that every $F_i$ is a singleton set of the form $F_i=\{(\mu_i,g_i,\nu_i)\}$.

We prove Condition (S) by the contrapositive of (\ref{eq11.1}). If $x\in \left(\bigcup_{i=1}^l \mathrm{Fix}(F_i)  \right)^\circ$, Lemma \ref{lem11.2} says that there is an index $1\leq j\leq l$ such that $d(\mu_j)=d(\nu_j)$. As $x\in \mathrm{Fix}(F_j)$, $x$ must be of the form $x=\nu_j y$ for $y\in \La^\infty$ such that
\begin{align*}
\theta_{F_j}(x)&=x \\
\Longrightarrow \hspace{18mm} \theta_{\{(\mu_j,g_j,\nu_j)\}}(\nu_j y)&=\nu_j y \hspace{20mm} \\
\Longrightarrow \hspace{29mm} \mu_j(g_j\cdot y) & =\nu_j y.
\end{align*}
Since $d(\mu_j)=d(\nu_j)$, the unique factorization property implies $\mu_j=\nu_j$ and $g_j\cdot y = y$. Next, by condition (3) there exists $n\in \mathbb{N}^k$ such that  $\varphi(g_j,y(0,n))=e_G$. Write $\la:=y(0,n)$ for convenience. Then $\la$ is strongly fixed by $g_j$, and Lemma \ref{lem4.3} yields that $F_j$ dominates the idempotent $\{(\mu_j\la, e_G, \mu_j\la)\}$. Hence, we have $x\in \mathrm{TFix}(F_j)$ contradicting the left hand side of (\ref{eq11.1}). Consequently, the inverse semigroup $\sg$ satisfies Condition (S).

Now, since $\widehat{\mathcal{E}}_{\mathrm{tight}}(\sg)=\widehat{\mathcal{E}}_\infty (\sg)$, \cite[Lemma 5.6]{cla19} implies that every compact open subset of $\gt$ is regular open\footnote{A subset $B\subseteq \gt$ is called \emph{regular open} if $(\overline{B})^0=B$}. Moreover, $\gt$ is a minimal and effective groupoid by Theorems \ref{thm8.3} and \ref{thm9.6}, respectively. Therefore, we may apply \cite[Corollary 4.12]{cla19} to deduce that $C^*_{\mathrm{red}}(\gt)$ is a simple $C^*$-algebra.
\end{proof}

\begin{cor}\label{cor11.4}
Let $(G,\La)$ be a self-similar $k$-graph over a row-finite, source-free $k$-graph $\La$ and an ``amenable" group $G$. If $(G,\La)$ satisfies conditions (1)-(3) of Propositions \ref{prop11.3} above, then $\og$ is simple.
\end{cor}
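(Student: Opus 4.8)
The plan is to deduce the simplicity of $\og$ by combining Proposition \ref{prop11.3} with the nuclearity input already used in Section \ref{sec10}. Proposition \ref{prop11.3} tells us that, under hypotheses (1)--(3), the reduced $C^*$-algebra $C^*_{\mathrm{red}}(\gt)$ is simple, so the only thing left to establish is the identification $\og \cong C^*_{\mathrm{red}}(\gt)$. By Theorem \ref{thm4.7} we already know $\og \cong C^*(\gt)$, and there is a canonical quotient map $C^*(\gt) \twoheadrightarrow C^*_{\mathrm{red}}(\gt)$; hence the whole proof reduces to showing that this quotient map is an isomorphism, i.e.\ that the full and reduced $C^*$-algebras of $\gt$ coincide.

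For this I would invoke \cite[Proposition 3.10]{li21-ideal}: since $G$ is amenable and $\La$ is row-finite and source-free, $\og$ is nuclear. As recorded just before Corollary \ref{cor10.2}, this yields $\og \cong C^*_{\mathrm{red}}(\gt)$ (equivalently, $\gt$ is amenable and $C^*(\gt)=C^*_{\mathrm{red}}(\gt)$). One could instead extract the amenability of $\gt$ from the LCSC picture: by Proposition \ref{prop6.5}, $\gt$ is isomorphic to the groupoid model $\mathcal{G}_{\mathrm{tight}}(\mathcal{S}(\La \rtimes^{\varphi} G))$ of \cite{ort20} (and to Spielberg's groupoid of \cite{spi20}) for the Zappa--Sz\'{e}p product $\La \rtimes^{\varphi} G$, for which amenability over an amenable group is available; in the pseudo free case one may also use the isomorphism $\gt \cong \g$ of Proposition \ref{prop7.2} together with the amenability of $\g$ from \cite{li21-ideal}. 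Either way, $\og \cong C^*(\gt) = C^*_{\mathrm{red}}(\gt)$.

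Putting the two steps together finishes the proof: $\og \cong C^*_{\mathrm{red}}(\gt)$ by the previous paragraph, and $C^*_{\mathrm{red}}(\gt)$ is simple by Proposition \ref{prop11.3} whenever $(G,\La)$ satisfies (1)--(3); hence $\og$ is simple.

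The step I expect to be the main obstacle is precisely the identification $C^*(\gt) = C^*_{\mathrm{red}}(\gt)$. In the generality of this section the groupoid $\gt$ need not be Hausdorff (see Theorem \ref{thm5.2}), and for non-Hausdorff \'{e}tale groupoids the passage between the full and reduced $C^*$-algebras, and its relation to (topological or measurewise) amenability, is considerably more delicate than in the Hausdorff situation. The way I would circumvent this is exactly what the standing hypotheses of Sections \ref{sec10}--\ref{sec11} are designed for: rather than proving amenability of $\gt$ from scratch, one leans on the already-available nuclearity of $\og$ obtained from the amenability of $G$ and the row-finite, source-free hypothesis on $\La$.
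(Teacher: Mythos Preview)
Your proposal is correct and follows essentially the same approach as the paper: invoke \cite[Proposition 3.10]{li21-ideal} to obtain nuclearity of $\og$ from the amenability of $G$ and the row-finite, source-free hypotheses on $\La$, deduce $\og \cong C^*_{\mathrm{red}}(\gt)$, and then apply Proposition \ref{prop11.3}. The paper's proof is just the two-line version of exactly this argument, without your additional commentary on the non-Hausdorff subtlety or the alternative LCSC/pseudo-free routes.
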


\begin{proof}
Since $G$ is amenable, \cite[Proposition 3.10]{li21-ideal} implies that $\og$ is a nuclear $C^*$-algebra. Hence, $\og \cong C^*_{\mathrm{red}}(\gt)$ and the result follows from Proposition \ref{prop11.3}.
\end{proof}


\subsection*{Acknowledgment}

The author is grateful to Dilian Yang and the anonymous referee for reviewing the initial version of article and their constructive comments. Also, he acknowledges financial support from Shahid Chamran University of Ahvaz (grant no. SCU.MM1402.279).


\subsection*{Conflict of interest}
The author declares that he has no conflict of interest.


\end{document}